\documentclass[10pt]{amsart}
\usepackage[mathscr]{eucal}
\usepackage{amsmath,amsfonts}
%

             \hoffset -1.3cm
      \voffset -1cm
\textwidth 16truecm
      \textheight 22.5truecm

\newtheorem{theorem}{Theorem}[section]

\newtheorem{proposition}[theorem]{Proposition}
\newtheorem{corollary}[theorem]{Corollary}
\newtheorem{lemma}[theorem]{Lemma}
\newtheorem{example}[theorem]{Example}

\newtheorem{remark}[theorem]{Remark}


\makeatletter
\@addtoreset{equation}{section}
\makeatother

\newcommand{\CC}{{\mathbb C}}
\newcommand{\NN}{{\mathbb N}}

\newcommand{\DD}{{\mathbb D}}
\newcommand{\RR}{{\mathbb R}}
\newcommand{\FF}{{\mathbb F}}
\newcommand{\TT}{{\mathbb T}}

\newcommand{\cA}{{\mathcal A}}

\newcommand{\cD}{{\mathcal D}}
\newcommand{\cE}{{\mathcal E}}

\newcommand{\cG}{{\mathcal G}}
\newcommand{\cH}{{\mathcal H}}
\newcommand{\cK}{{\mathcal K}}

\newcommand{\cM}{{\mathcal M}}
\newcommand{\cN}{{\mathcal N}}

\newcommand{\cP}{{\mathcal P}}

\newcommand{\cR}{{\mathcal R}}
\newcommand{\cS}{{\mathcal S}}

\newcommand{\cV}{{\mathcal V}}

\newdimen\expt
\expt=.1ex
\def\boxit#1{\setbox0\hbox{$\displaystyle{#1}$}
      \hbox{\lower.4\expt
 \hbox{\lower3\expt\hbox{\lower\dp0
      \hbox{\vbox{\hrule height.4\expt
 \hbox{\vrule width.4\expt\hskip3\expt
      \vbox{\vskip3\expt\box0\vskip2\expt}%
 \hskip3\expt\vrule width.4\expt}\hrule height.4\expt}}}}}}

\begin{document}




\title [ Joint similarity  to   operators in noncommutative varieties ]
{ Joint similarity  to   operators in noncommutative varieties }
 \author{Gelu Popescu}
\date{February 5, 2010}
\thanks{The author was
partially supported by an NSF grant} \subjclass{Primary: 46L07,
47A20; Secondary: 47A45, 47A62, 47A63} \keywords{noncommutative
variety, joint similarity,
Berezin transform,  Fock space, weighted shift,  completely bounded
map, triangulation, invariant subspace}

\address{Department of Mathematics, The University of Texas
at San Antonio \\ San Antonio, TX 78249, USA} \email{\tt
gelu.popescu@utsa.edu}

\begin{abstract}
In this paper we solve several  problems  concerning  joint
similarity to $n$-tuples of operators in
  noncommutative varieties  ${\cV}_{f,\cP}^m(\cH)\subset B(\cH)^n$,
 $m\geq 1$,  associated with positive regular free
holomorphic functions $f$ in $n$ noncommuting variables  and  with
sets $\cP$ of noncommutative polynomials in $n$ indeterminates,
where $B(\cH)$ is the algebra of all bounded linear operators on a
Hilbert space $\cH$. In particular, if $f=X_1+\cdots +X_n$ and
$\cP=\{0\}$, the elements of the corresponding variety can be seen
as noncommutative multivariable analogues
 of Agler's $m$-hypercontractions.

We introduce a class of generalized noncommutative Berezin transforms and use them
to solve operator inequalities associated with  noncommutative varieties ${\cV}_{f,\cP}^m(\cH)$.
We point out a very strong connection between the cone of  their positive solutions
 and the  joint similarity problems. Several classical results concerning  the similarity
 to contractions have  analogues  in our noncommutative multivariable setting.
 When $\cP$ consists of the commutators $X_iX_j-X_jX_i$, \, $i,j\in \{1,\ldots,n\}$, we obtain commutative versions of these results.
  We remark that, in the  particular case when $n=m=1$, $f=X$, and $\cP=\{0\}$, we recover  the corresponding
similarity results obtained by Sz.-Nagy, Rota, Foia\c s, de Branges-Rovnyak, and Douglas.

 We use some of the results of this paper  to  provide Wold type decompositions and   triangulations for
 $n$-tuples of operators in  noncommutative varieties
${\cV}_{f,\cP}^1(\cH)$, which parallel the  classical Sz.-Nagy--Foia\c s triangulations
 for contractions  but also provide new proofs.
As consequences, we prove the existence of  joint invariant subspaces for certain
 classes of operators in ${\cV}_{f,\cP}^1(\cH)$.
\end{abstract}

\maketitle

\section*{Introduction}

Let $B(\cH)$ denote  the algebra of all bounded linear operators on
a Hilbert space $\cH$. Two operators $A,B\in B(\cH)$ are called similar
 if there is an invertible operator $S\in B(\cH)$ such that $A=S^{-1} B S$.
 The problem of characterizing
the operators similar to contractions, i.e., the operators in the unit ball
$$
[B(\cH)]_1:=\{X\in B(\cH):\ XX^*\leq I\},
$$
or similar to special   contractions such as parts of shifts,
isometries, unitaries, etc., has been considered  by many authors
and has generated deep results in operator theory and operator
algebras. We shall  mention some of the classical  results on
similarity that strongly influenced us in writing this paper.

In 1947, Sz.-Nagy \cite{SzN} found necessary and sufficient conditions
for an operator to be similar to a unitary operator. In particular,  an
operator $T$ is
similar to an isometry if and only if there are constants $a,b>0$ such that
$$
a\|h\|\leq  \|T^nh\|\leq b\|h\|,\qquad  h\in \cH, n\in \NN.
$$
The fact that the unilateral shift on the Hardy space $H^2(\TT)$   plays  the role of {\it universal model} in $B(\cH)$
was discovered by Rota  \cite{R}.
Rota's model theorem  asserts that any operator with spectral radius less than
 one is similar to a contraction, or more precisely, to a part of a backward
 unilateral shift. This result was refined furthermore by Foia\c s \cite{Fo}
  and  by de Branges and Rovnyak \cite{BR}, who proved that every strongly
  stable
   contraction is unitarily equivalent to a part of a backward unilateral shift.

It is well-known that if   $T\in B(\cH)$ is similar to a contraction then, due to the von
Neumann inequality \cite{von}, it  is polynomially bounded, i.e., there is a
constant $C>0$ such that, for any polynomial $p$,
$$
\|p(T)\|\leq C \|p\|_\infty,
$$
 where $ \|p\|_\infty:= \sup_{|z|=1} |p(z)|$. A remarkable result  obtained
 by Paulsen \cite{Pa} shows that similarity  to a contraction is equivalent
 to complete polynomial boundedness. Halmos' famous similarity problem \cite{H2}
   asked  whether any polynomially bounded operator is similar to a contraction.
   This long standing problem was answered  by Pisier
 \cite{Pi} in  a remarkable paper where  he shows that there are  polynomially
  bounded operators which are  not similar to  contractions. For more information
  on similarity problems and completely bounded maps we refer the reader to the
   excellent books by Pisier \cite{Pi-book} and Paulsen \cite{Pa-book}.

In the noncommutative multivariable setting, joint similarity problems to
row contractions, i.e., $n$-tuples of operators in the unit ball
$$
[B(\cH)^n]_1:=\{(X_1,\ldots, X_n)\in B(\cH)^n: \ X_1X_1^*+\cdots
+X_nX_n^*\leq I\},
$$
were  considered by Bunce \cite{Bu}, the author (see \cite{Po-models},
\cite{Po-similarity}, \cite{Po-charact2}, \cite{Po-unitary}), and recently by Douglas,
Foia\c s, and Sarkar \cite{DFS}.
In this setting, the {\it universal model} for  the unit ball  $[B(\cH)^n]_1$ is
 the $n$-tuple $(S_1,\ldots, S_n)$ of left creation operators on the full Fock space with $n$ generators.

To put our work in perspective we need some notation.
Let  $\FF_n^+$ be  the unital free semigroup on $n$ generators
$g_1,\ldots, g_n$ and the identity $g_0$.  The length of $\alpha\in
\FF_n^+$ is defined by $|\alpha|:=0$ if $\alpha=g_0$  and
$|\alpha|:=k$ if
 $\alpha=g_{i_1}\cdots g_{i_k}$, where $i_1,\ldots, i_k\in \{1,\ldots, n\}$.
If $X:=(X_1,\ldots, X_n)\in B(\cH)^n$   we
denote $X_\alpha:= X_{i_1}\cdots X_{i_k}$  and $X_{g_0}:=I_\cH$, the identity on $\cH$.

In \cite{Po-domains} (case $m=1$) and \cite{Po-Berezin} (case $m\geq 2$),
 we studied  more general noncommutative domains
$$
{\bf D}_p^m(\cH):=\left\{X:= (X_1,\ldots, X_n)\in B(\cH)^n: \
 (id-\Phi_{p,X})^s(I)\geq 0 \ \text{ for } \ s=1,\ldots, m\right\},
$$
where $id$ is the identity map on $B(\cH)$,
$$\Phi_{p,X}(Y):=\sum_{|\alpha|\geq 1} a_\alpha X_\alpha YX_\alpha^*,\qquad Y\in B(\cH),
$$
  and  $p=\sum_{|\alpha|\geq 1} a_\alpha X_\alpha$ is a
   positive regular noncommutative polynomial, i.e., its coefficients are positive
    scalars and $a_\alpha>0$ if  $\alpha\in \FF_n^+$ with $|\alpha|=1$.
We  remark that
if $q=X_1+\cdots+X_n$ and $m\geq 1$,  then ${\bold D}_q^m(\cH)$  is a starlike domain
 which concides with  the set of all row contractions $(X_1,\ldots, X_n)\in [B(\cH)^n]_1$
satisfying the positivity condition
$$
\sum_{k=0}^m (-1)^k \left(\begin{matrix} m\\k\end{matrix}\right)\sum_{|\alpha|=k} X_\alpha X_\alpha^*\geq 0.
$$
The elements of  the domain ${\bold D}_q^m(\cH)$  can be seen as
multivariable noncommutative analogues of Agler's
$m$-hypercontractions \cite{Ag2}. The case $n=1$ was  recently
studied by Olofsson (\cite{O2}, \cite{O1}). We showed
(\cite{Po-domains}, \cite{Po-Berezin}) that each domain ${\bf
D}_p^m(\cH)$ has
 a  {\it universal model} $(W_1,\ldots, W_n)$ of
{\it weighted left creation operators} \, acting on the full Fock space   with $n$
generators. The  study of the domain ${\bf D}_p^m(\cH)$  and the dilation theory associated with it are  close related to
the study of the weighted shifts $W_1,\ldots,W_n$, their joint
invariant subspaces, and the representations of the algebras they
generate: the domain algebra $\cA_n({\bf D}_p^m)$, the Hardy algebra
$F_n^\infty({\bf D}_p^m)$, and the $C^*$-algebra $C^*(W_1,\ldots,
W_n)$.

In the present paper, we consider   problems of joint similarity to classes of
$n$-tuples of operators in  noncommutative domains ${\bold D}_p^m(\cH)$, $m\geq 1$,  and
 noncommutative varieties
 $$
\cV_{p,\cP}^m(\cH):= \left\{(X_1,\ldots, X_n)\in {\bf D}_p^m(\cH):\
q(X_1,\ldots, X_n)=0\quad \text{ for any }\quad q\in \cP\right\},
$$
where $\cP$ is a family of  noncommutative  polynomials in $n$
indeterminates.

In Section 1, expanding on the author's work (\cite{Po-Berezin},
\cite{Po-pluriharmonic}, \cite{Po-domains}) on noncommutative
Berezin transforms, we introduce a  new class of generalized Berezin
transforms which will play an important role in this paper. Given
$A:=(A_1,\ldots, A_n)\in B(\cH)^n$, our similarity problems to
$n$-tuples of operators in the noncommutative variety
$\cV^m_{p,\cP}(\cH)$ are linked to the noncommutative cone
$C(p,A)^+$ of all positive operators $D\in B(\cH)$ such that
$$
(id-\Phi_{p,A})^s(D)\geq 0,\qquad s=1,\ldots,m.
$$
For example, $(A_1,\ldots, A_n)$ is jointly similar to an $n$-tuple
of operators in $\cV^m_{p,\cP}(\cH)$ if and only if there is an
invertible operator in $C(p,A)^+$. Under natural conditions, we show
that there is a one-to-one correspondence between the elements of the  noncommutative
cone $C(p,A)^+$ and  a class of generalized Berezin transforms, to
be introduced.

In Section 2, a  pure version of the above-mentioned  result is
established, even in a more general setting. In particular, when
$m=1$ and $T:=(T_1,\ldots, T_n)\in \cV^1_{p,\cP}(\cH)$ is {\it pure},
i.e., $\Phi_{p,T}^k(I)\to 0$ strongly, as $k\to\infty$, we determine
the  noncommutative cone $C(p,T)^+$ by  showing that all its
elements have the form $P_\cH \Psi \Psi^*|_{\cH}$, where $\Psi$ is a
multi-analytic operator with respect to the universal $n$-tuple
$(B_1,\ldots, B_n)$ associated with the variety $\cV^1_{p,\cP}(\cH)$.
More precisely, $\Psi\in R_n^\infty(\cV^1_{p,\cP})\bar\otimes
B(\cK,\cK')$ for some Hilbert spaces $\cK$ and $\cK'$, where
$R_n^\infty(\cV^1_{p,\cP})$ is the commutant  of the noncommutative Hardy algebra
$F_n^\infty(\cV^1_{p,\cP})$. We remark that in the  particular case
when $n=m=1$, $p=X$,  $\cP=\{0\}$,
      and $\Phi_{p,T}(X):=TXT^*$ with $\|T\|\leq1$,
       the corresponding cone $C(p,T)^+$
      was studied by Douglas in \cite{Do} and by Sz.-Nagy
      and Foia\c s \cite{SzF1}
      in connection with $T$-Toeplitz operators (see also
  \cite{CF1} and \cite{CF2}).

 In Section 3,   we
 provide necessary and sufficient conditions
for  an $n$-tuple   $A:=(A_1,\ldots, A_n)\in B(\cH)^n$ to be jointly
similar to an $n$-tuple  of operators $T:=(T_1,\ldots, T_n)$ in the
noncommutative variety  ${\cV}_{p,\cP}^m(\cH)$ or   the distinguished  sets
$$\left\{ X \in {\cV}_{p,\cP}^m(\cH): \
 (id-\Phi_{p,X})^m(I)= 0\right\}\quad \text{ and }\quad
 \left\{ X \in {\cV}_{p,\cP}^m(\cH): \
 (id-\Phi_{p,X})^m(I)>0\right\},
$$
where $\cP$ is a set of noncommutative polynomials.   To give the
reader a flavor of  our  results, we shall be a little  bit more precise.
Given $(A_1,\ldots, A_n)\in B(\cH)^n$, we find  necessary and sufficient conditions
for the existence of an invertible operator $Y:
\cH\to \cG$ such that
$$
A_i^*=Y^{-1}[(B_i^*\otimes I_\cH)|_\cG]Y,\qquad i=1,\ldots, n
$$
where
 $\cG\subseteq \cN_\cP\otimes \cH$ is an invariant  subspace under  each operator $B_i^*\otimes
 I_\cH$  and $(B_1,\ldots, B_n)$ is
the universal
  model associated with the noncommutative variety
 $\cV^m_{f,\cP}(\cH)$. In particular, we  obtain  an analogue of Foia\c s \cite{Fo} and de Branges--Rovnyak \cite{BR}  model theorem, for pure $n$-tuples of operators in $\cV^m_{f,\cP}(\cH)$. We also obtain   the following Rota type \cite{R}
 model  theorem for    the noncommutative variety
$\cV^m_{f,\cP}(\cH)$. If
$A:=(A_1,\ldots, A_n)\in B(\cH)^n$ is  such that $q(A_1,\ldots,
A_n)=0$ for $q\in \cP$ and
$$
 \sum_{k=0}^\infty \left(\begin{matrix} k+m-1\\ m-1
\end{matrix}\right)\Phi_{p,A}^k(I)\leq bI
$$
for some constant  $ b>0$, then the above-mentioned joint similarity  holds.
Moreover, we prove that  the joint spectral radius
 $r_p(A_1,\ldots,A_n)<1$   if and only if $(A_1,\ldots, A_n)$ is jointly similar to an $n$-tuple
 $T:=(T_1,\ldots, T_n) \in {\cV}_{p,\cP}^m(\cH)$ with
 $(id-\Phi_{p,T})^m(I)>0$, i.e., positive invertible operator.

  We also
 provide necessary and sufficient conditions
for  an $n$-tuple   $A:=(A_1,\ldots, A_n)\in B(\cH)^n$ to be jointly
similar to an $n$-tuple  of operators $T:=(T_1,\ldots, T_n)\in
{\cV}_{p,\cP}^m(\cH)$  with  $ (id-\Phi_{p,T})^m(I)= 0$. Our
noncommutative  analugue of  Sz.-Nagy's similarity result \cite{SzN}
asserts that there is  an invertible operator $Y\in B(\cH)$ such
that $ A_i=Y^{-1} T_i Y$, $i=1,\ldots,n,$ if and only if there exist
 positive constants
 $0<c\leq d$ such that
 $$
 cI\leq  \Phi_{p,A}^k(I)\leq dI, \qquad k\in \NN.
 $$
In particular, we obtain a  multivariable analogue  of Douglas' similarity result \cite{Do}.

If $(A_1,\ldots, A_n)\in B(\cH)^n$ is  jointly
similar to an $n$-tuple  of operators in a {\it radial noncommutative variety}
${\cV}_{p,\cP}^m(\cH)$,  where $\cP$ is a set of homogeneous noncommutative polynomials, then the polynomial calculus
$g(B_1,\ldots, B_n)\mapsto g(A_1,\ldots, A_n)
$
 can be extended to a completely bounded  map on the noncommutative variety algebra $\cA_n({\cV}_{p,\cP}^m)$, the norm closed algebra generated by $B_1,\ldots, B_n$ and the identity. Using Paulsen's similarity result \cite{Pa}, we  can prove that the converse is true if $m=1$,  but remains an open problem if $m\geq 2$.

In Section 4, we obtain Wold type decompositions and  prove the existence of  triangulations of type
$$
\left(\begin{matrix}C_{\cdot 0}&0\\
*& C_{\cdot 1}\end{matrix} \right)\quad \text{ and }\quad \left(\begin{matrix}C_{c}&0\\
*& C_{cnc}\end{matrix} \right)
$$
for any $n$-tuple  of operators in the noncommutative variety
${\cV}^1_{p,\cP}(\cH)$, which parallel the   Sz.-Nagy--Foia\c s
\cite{SzF-book}  triangulations
 for contractions. The proofs seem to be new even in the classical case $n=1$, since they don't involve, at least explicitly, the dilation space for contractions.
As consequences, we prove the existence of  joint invariant
subspaces for certain
 classes of operators in ${\cV}^1_{p,\cP}(\cH)$.

We should mention that the results of this paper are
presented in a
 more general setting  when the polynomials $p$  in the definition of ${\cV}_{p,\cP}^m(\cH)$  is
  replaced by positive regular free holomorphic functions.

\bigskip

\section{Generalized noncommutative Berezin transforms  and the cone $C(f,A)^+$}\label{Berezin}

In this section, we introduce a class of generalized
Berezin transforms which will play an important role in this paper. We use them to study
the noncommutative cone $C(f,A)^+$
of all positive solutions of the operator inequalities
$$
(id-\Phi_{f,A})^s(X)\geq 0,\qquad s=1,\ldots,m.
$$

First, we recall (\cite{Po-Berezin}, \cite{Po-domains}) the
construction of the universal model associated with   the
noncommutative domain ${\bold D}_f^m(\cH)$, $m\geq 1$.
 Throughout this paper, we
  assume that $f:= \sum_{\alpha\in
\FF_n^+} a_\alpha X_\alpha$, \ $a_\alpha\in \CC$,  is  a {\it
positive regular free holomorphic function} in $n$ variables
$X_1,\ldots,X_n$. This means
\begin{enumerate}
\item[(i)]
 $
\limsup_{k\to\infty} \left( \sum_{|\alpha|=k}
|a_\alpha|^2\right)^{1/2k}<\infty$,
\item[(ii)]  $a_\alpha\geq 0$ for
any $\alpha\in \FF_n^+$, \ $a_{g_0}=0$,
 \ and  $a_{g_i}>0$ for  $i=1,\ldots, n$.
  \end{enumerate}
Given $m\in \NN:=\{1,2,\ldots\}$ and  a positive regular free
holomorphic
  function $f$ as above,
  we define  the noncommutative domain ${\mathbf D}_f^m$ whose
  representation on a Hilbert space $\cH$ is
$$
{\mathbf D}_f^m(\cH):=\left\{X:= (X_1,\ldots, X_n)\in B(\cH)^n: \
 (id-\Phi_{f,X})^s(I)\geq 0  \ \text{ for } \ s=1,\ldots,  m\right\},
$$
where $\Phi_{f,X}:B(\cH)\to B(\cH)$ is given by $$
\Phi_{f,X}(Y):=\sum_{k=1}^\infty\sum_{|\alpha|=k} a_\alpha X_\alpha
YX_\alpha^*, \qquad   Y\in B(\cH), $$
 and the convergence is in the weak
operator topology.  ${\bold D}_f^m(\cH)$ can be seen as a
noncommutative Reinhardt domain, i.e., $(e^{i\theta_1}X_1,\ldots,
e^{i\theta_n}X_n)\in {\bold D}_f^m(\cH)$ for any $(X_1,\ldots,
X_n)\in {\bold D}_f^m(\cH)$ and $\theta_1,\ldots, \theta_n\in \RR$.

  Let $H_n$ be
an $n$-dimensional complex  Hilbert space with orthonormal basis
$e_1,\dots,e_n$, where $n\in \NN$ or $n=\infty$.
  We consider the full Fock space  of $H_n$ defined by
$$F^2(H_n):=\bigoplus_{k\geq 0} H_n^{\otimes k},$$
where $H_n^{\otimes 0}:=\CC 1$ and $H_n^{\otimes k}$ is the
(Hilbert) tensor product of $k$ copies of $H_n$. Set $e_\alpha :=
e_{i_1}\otimes e_{i_2}\otimes \cdots \otimes e_{i_k}$ if
$\alpha=g_{i_1}g_{i_2}\cdots g_{i_k}\in \FF_n^+$
 and $e_{g_0}:= 1$.
It is  clear that $\{e_\alpha:\alpha\in\FF_n^+\}$ is an orthonormal
basis of $F^2(H_n)$.
Define the left creation operators $S_i:F^2(H_n)\to F^2(H_n), \
i=1,\dots, n$,  by
$
 S_i f:=e_i\otimes f , \ f\in F^2(H_n).
$

Let $D_i:F^2(H_n)\to F^2(H_n)$, $i=1,\ldots, n$,  be the diagonal
operators  given by
$$
D_ie_\alpha:=\sqrt{\frac{b_\alpha^{(m)}}{b_{g_i \alpha}^{(m)}}}
e_\alpha,\qquad
 \alpha\in \FF_n^+,
$$
where
\begin{equation}
\label{b-al} b_{g_0}^{(m)}:=1\quad \text{ and } \quad
 b_\alpha^{(m)}:= \sum_{j=1}^{|\alpha|}
\sum_{{\gamma_1\cdots \gamma_j=\alpha }\atop {|\gamma_1|\geq
1,\ldots, |\gamma_j|\geq 1}} a_{\gamma_1}\cdots a_{\gamma_j}
\left(\begin{matrix} j+m-1\\m-1
\end{matrix}\right)  \qquad
\text{ if } \ \alpha\in \FF_n^+, \,|\alpha|\geq 1.
\end{equation}
  We have
$$
\|D_i\|=\sup_{\alpha\in \FF_n^+} \sqrt{\frac{b_\alpha^{(m)}}{b_{g_i
\alpha}^{(m)}}}\leq \frac{1}{\sqrt{a_{g_i}}}, \qquad i=1,\ldots,n.
$$
Define the {\it weighted left creation  operators} $W_i:F^2(H_n)\to
F^2(H_n)$, $i=1,\ldots, n$,  associated with the noncommutative
domain  ${\bold D}_f^m $  by setting $W_i:=S_iD_i$, where
 $S_1,\ldots, S_n$ are the left creation operators on the full
 Fock space $F^2(H_n)$.
 Note that
\begin{equation*}
W_i e_\alpha=\frac {\sqrt{b_\alpha^{(m)}}}{\sqrt{b_{g_i
\alpha}^{(m)}}} e_{g_i \alpha}, \qquad \alpha\in \FF_n^+.
\end{equation*}
 One can easily see that
\begin{equation}\label{WbWb}
W_\beta e_\gamma= \frac {\sqrt{b_\gamma^{(m)}}}{\sqrt{b_{\beta
\gamma}^{(m)}}} e_{\beta \gamma} \quad \text{ and }\quad W_\beta^*
e_\alpha =\begin{cases} \frac
{\sqrt{b_\gamma^{(m)}}}{\sqrt{b_{\alpha}^{(m)}}}e_\gamma& \text{ if
}
\alpha=\beta\gamma \\
0& \text{ otherwise }
\end{cases}
\end{equation}
 for any $\alpha, \beta \in \FF_n^+$.
According to Theorem 1.3 from \cite{Po-Berezin}, the weighted left
creation
  operators $W_1,\ldots, W_n$     associated with ${\bold D}_f^m $  have the following properties:
 \begin{enumerate}
 \item[(i)] $\sum_{k=1}^\infty\sum_{|\beta|=k} a_\beta W_\beta W_\beta^*\leq I$, where the
convergence is in the strong operator topology;
 \item[(ii)]
 $\left(id-\Phi_{f,W}\right)^{m}(I)=P_\CC$, where $P_\CC$ is the
 orthogonal projection from $F^2(H_n)$ onto $\CC 1\subset F^2(H_n)$, and
  $\lim\limits_{p\to\infty} \Phi^p_{f,W}(I)=0$ in the strong operator
 topology.
 \end{enumerate}
     The $n$-tuple $(W_1,\ldots,
W_n)\in {\bf D}_f^m (F^2(H_n))$ plays the role of universal model
for the noncommutative domain ${\bf D}_f^m $. The domain algebra
$\cA_n({\bf D}^m_f)$ associated with the noncommutative domain ${\bf
D}^m_f$ is the norm closure of all polynomials in $W_1,\ldots, W_n$,
and the identity, while the Hardy algebra  $F_n^\infty({\bf D}_f^m)$
is the SOT-(WOT-, or $w^*$-)  version.

We remark that, one  can also define the {\it weighted right
creation operators} $\Lambda_i:F^2(H_n)\to F^2(H_n)$ by setting
$\Lambda_i:= R_i G_i$, $i=1,\ldots,n$,  where $R_1,\ldots, R_n$ are
 the right creation operators on the full Fock space $F^2(H_n)$ and
 each  diagonal operator $G_i$  is defined by
$$
G_ie_\alpha:=\sqrt{\frac{b_\alpha^{(m)}}{b_{ \alpha g_i}^{(m)}}}
e_\alpha,\quad
 \alpha\in \FF_n^+,
$$
where the coefficients $b_\alpha^{(m)}$, $\alpha\in \FF_n^+$, are
given by relation \eqref{b-al}. It turns out that
  $(\Lambda_1 ,\ldots,
\Lambda_n )$ is in the noncommutative domain $ {\bf D}_{\widetilde
f}^{m}(F^2(H_n)),$ where
  $\widetilde{f}:=\sum_{|\alpha|\geq 1} a_{\widetilde \alpha} X_\alpha$ and
$\widetilde \alpha=g_{i_k}\cdots g_{i_1}$ denotes the reverse of
$\alpha=g_{i_1}\cdots g_{i_k}\in \FF_n^+$.
 Moreover,   $W_i \Lambda_j =\Lambda_j  W_i $
   and $U^* \Lambda_i  U=W_i $,
$i=1,\ldots,n$, where $U\in B(F^2(H_n))$ is the unitary operator
defined by equation $U e_\alpha:= e_{\widetilde \alpha}$, $\alpha\in
\FF_n^+$. Consequently, we have
\begin{equation*}
F_n^\infty({\bf D}_f^m)^\prime=R_n^\infty({\bf D}_f^m)\ \text{ and }
\ R_n^\infty({\bf D}_f^m)^\prime=F_n^\infty({\bf D}_f^m),
\end{equation*}
where $^\prime$ stands for the commutant and $R_n^\infty({\bf
D}_f^m)$  is the   SOT-(WOT-, or $w^*$-) closure of all polynomials
in $\Lambda_1,\ldots, \Lambda_n$, and the identity. More on these
noncommutative Hardy algebras can be found in \cite{Po-von},
\cite{Po-Berezin}, and \cite{Po-domains}.

 \bigskip

In what follows, we introduce a noncommutative Berezin kernel
associated with any quadruple $(f,m, A,R)$ satisfying the following
compatibility conditions:
\begin{enumerate}
\item[(i)]
  $f:=\sum_{|\alpha|\geq 1} a_\alpha X_\alpha$ is
a positive regular free holomorphic function and $m\in \NN$;
\item[(ii)]
   $A:=(A_1,\ldots, A_n)\in B(\cH)^n$ is   such that
$\sum\limits_{|\alpha|\geq 1} a_\alpha A_\alpha A_\alpha^* $ \ is
SOT-convergent;
  \item[(iii)] $R\in
B(\cH)$ is a positive operator such that
 $$
\sum_{k=0}^\infty \left(\begin{matrix} k+m-1\\ m-1
\end{matrix}\right)\Phi_{f,A}^k(R)\leq b I,
$$
 for some constant $b>0$.
\end{enumerate}
The noncommutative Berezin kernel associated with the compatible
quadruple $(f, m, A,R)$ is the operator $K^{(m)}_{f,A,R}:\cH\to
F^2(H_n)\otimes \overline{ R^{1/2}(\cH)}$  \, given by
\begin{equation}
\label{Po-ker}
 K^{(m)}_{f,A,R}h=\sum_{\alpha\in \FF_n^+} \sqrt{b_\alpha^{(m)}}
e_\alpha\otimes   R^{1/2} A_\alpha^* h,\qquad h\in \cH.
\end{equation}

\begin{lemma}\label{lemma1}     The noncommutative
Berezin  kernel \ $K^{(m)}_{f,A,R}$ associated with a compatible
quadruple  $(f,m, A,R)$ is a bounded operator and
\begin{equation*}
  K^{(m)}_{f,A,R} A_i^*=(W_i^*\otimes
I_{\cR})K^{(m)}_{f,A,R},\qquad i=1,\ldots, n,
\end{equation*}
where  $\cR:=\overline{ R^{1/2}(\cH)}$ and $(W_1,\ldots, W_n)$ is
the universal model associated with
the noncommutative domain ${\bf D}_f^m $. Moreover,
\begin{equation*}
  \left(K^{(m)}_{f,A,R}\right)^* K^{(m)}_{f,A,R}= \sum_{k=0}^\infty \left(\begin{matrix} k+m-1\\ m-1
\end{matrix}\right)
\Phi_{f,A}^k(R).
\end{equation*}
\end{lemma}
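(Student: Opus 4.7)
The plan is to establish all three conclusions by direct computation on the defining series for $K^{(m)}_{f,A,R}$, using the explicit formula \eqref{b-al} for the weights $b_\alpha^{(m)}$ to connect the norm square with the operator series on the right.

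\medskip

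First I would compute $\|K^{(m)}_{f,A,R} h\|^2$ formally on a fixed $h\in \cH$. Since the vectors $\{e_\alpha\}_{\alpha\in\FF_n^+}$ are orthonormal, the definition \eqref{Po-ker} gives
\[
\|K^{(m)}_{f,A,R} h\|^2 = \sum_{\alpha\in\FF_n^+} b_\alpha^{(m)} \langle R A_\alpha^* h, A_\alpha^* h\rangle
= \Big\langle \sum_{\alpha\in\FF_n^+} b_\alpha^{(m)} A_\alpha R A_\alpha^*\, h,\, h\Big\rangle.
\]
The key combinatorial step is to recognize the right-hand side as $\sum_{k=0}^\infty \binom{k+m-1}{m-1}\Phi_{f,A}^k(R)$. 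Indeed, expanding each iterate gives
\[
\Phi_{f,A}^k(R)=\sum_{|\gamma_1|\geq 1,\ldots,|\gamma_k|\geq 1} a_{\gamma_1}\cdots a_{\gamma_k}\, A_{\gamma_1\cdots\gamma_k} R A_{\gamma_1\cdots\gamma_k}^*,
\]
and grouping all terms with a fixed product $\alpha=\gamma_1\cdots\gamma_k$ recovers, by \eqref{b-al}, the coefficient $b_\alpha^{(m)}$ (the $k=0$ term supplying $b_{g_0}^{(m)}=1$). The compatibility hypothesis (iii) then bounds this by $b\|h\|^2$, which simultaneously proves boundedness of $K^{(m)}_{f,A,R}$ and the identity $(K^{(m)}_{f,A,R})^* K^{(m)}_{f,A,R}=\sum_{k=0}^\infty \binom{k+m-1}{m-1}\Phi_{f,A}^k(R)$. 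A minor subtlety is justifying the interchange of summation; this is handled by cutting off to finite sums, noting that the partial sums form a monotone net of positive operators dominated by $bI$, and passing to the limit.

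\medskip

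Next I would verify the intertwining relation. Applied to $h\in\cH$, the left-hand side is
\[
K^{(m)}_{f,A,R} A_i^* h = \sum_{\alpha\in\FF_n^+} \sqrt{b_\alpha^{(m)}}\, e_\alpha \otimes R^{1/2} A_{g_i\alpha}^* h.
\]
For the right-hand side, apply $W_i^*\otimes I_\cR$ to $K^{(m)}_{f,A,R} h$ and use the explicit action in \eqref{WbWb}: only the terms indexed by $\beta=g_i\gamma$ contribute, and each produces a factor $\sqrt{b_\gamma^{(m)}}/\sqrt{b_{g_i\gamma}^{(m)}}$. The prefactor $\sqrt{b_{g_i\gamma}^{(m)}}$ cancels exactly against this weight, leaving
\[
(W_i^*\otimes I_\cR) K^{(m)}_{f,A,R} h = \sum_{\gamma\in\FF_n^+} \sqrt{b_\gamma^{(m)}}\, e_\gamma \otimes R^{1/2} A_{g_i\gamma}^* h,
\]
which matches the left-hand side after relabeling.

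\medskip

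I do not anticipate any serious obstacle: the only nontrivial ingredient is the combinatorial identity relating $\sum_\alpha b_\alpha^{(m)} A_\alpha R A_\alpha^*$ to $\sum_k \binom{k+m-1}{m-1}\Phi_{f,A}^k(R)$, which is really just a reshuffling of the definition \eqref{b-al}. The rest is a bookkeeping exercise exploiting the weighted shift structure of $W_i$.
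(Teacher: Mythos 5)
Your proposal is correct and follows essentially the same route as the paper: the norm computation via the orthonormality of $\{e_\alpha\}$ together with the expansion of $b_\alpha^{(m)}$ from \eqref{b-al} (equivalently, regrouping $\Phi_{f,A}^k(R)$ by the product $\gamma_1\cdots\gamma_k=\alpha$) yields boundedness and the formula for $\bigl(K^{(m)}_{f,A,R}\bigr)^*K^{(m)}_{f,A,R}$, and the intertwining identity is the same termwise cancellation using \eqref{WbWb}. Your extra remark on justifying the interchange of summation by monotone partial sums is a harmless refinement of what the paper leaves implicit.
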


\begin{proof} Since $(f, m,
A,R)$ is a compatible quadruple, $R\in B(\cH)$ is a positive
operator such that
 \begin{equation}
 \label{bound}
\sum_{k=0}^\infty \left(\begin{matrix} k+m-1\\ m-1
\end{matrix}\right)\Phi_{f,A}^k(R)\leq b I
\end{equation}
 for some constant $b>0$.
Note that due to relations \eqref{b-al} and \eqref{Po-ker}, we
 have
\begin{equation*}
\begin{split}
\|K^{(m)}_{f,A,R}h\|^2&= \sum_{\beta\in \FF_n^+} b_\beta^{(m)}\left<
A_\beta RA_\beta^*h, h\right>
 =
\left< R h,h\right>+ \sum_{m=1}^\infty \sum_{|\beta|=m}\left<
b_\beta^{(m)} A_\beta  RA_\beta^*h, h\right>\\
& =\left< R h,h\right>+ \sum_{m=1}^\infty
\sum_{|\beta|=m}\left<\left(
\sum_{j=1}^{|\beta|}\left(\begin{matrix} j+m-1\\ m-1
\end{matrix}\right)\sum_{{\gamma_1\cdots
\gamma_j=\beta}\atop{|\gamma_1|\geq 1,\ldots, |\gamma_j|\geq 1}}
a_{\gamma_1}\cdots a_{\gamma_j}\right) A_{\gamma_1\cdots \gamma_j}
 R A_{\gamma_1\cdots \gamma_j}^*h,h\right>\\
& =\left< R h,h\right>+ \sum_{k=1}^\infty \left<\left(\begin{matrix}
k+m-1\\ m-1
\end{matrix}\right)\Phi_{f,A}^k(
R)h,h\right>
\end{split}
\end{equation*}
for any $h\in \cH$.  Hence and due to  relation \eqref{bound}, we
deduce that $K^{(m)}_{f,A,R}$ is  a well-defined bounded operator
and
\begin{equation*}
  \left(K^{(m)}_{f,A,R}\right)^* K^{(m)}_{f,A,R}= \sum_{k=0}^\infty
\left(\begin{matrix} k+m-1\\ m-1
\end{matrix}\right)\Phi_{f,A}^k(R).
\end{equation*}
 On the other hand,  due to relations \eqref{Po-ker} and \eqref{WbWb}, we have
\begin{equation*}
\begin{split}
(W_i^*\otimes I_\cR)K^{(m)}_{f,A,R}h&= \sum_{\alpha\in \FF_n^+}
\sqrt{b_\alpha^{(m)}} W_i^*e_\alpha\otimes   R^{1/2} A_\alpha^* h\\
&=\sum_{\gamma\in \FF_n^+}
\sqrt{b^{(m)}_{g_i \gamma}} W_i^*e_{g_i \gamma}\otimes   R^{1/2} A_{g_i \gamma}^* h\\
&=\sum_{\gamma\in \FF_n^+}
\sqrt{b^{(m)}_{\gamma}}  e_{\gamma}\otimes   R^{1/2} A_{  \gamma}^* A_i^*h\\
&=K^{(m)}_{f,A,R} A_i^*h
\end{split}
\end{equation*}
for any $h\in \cH$.
 Hence,
\begin{equation*}
  K^{(m)}_{f,A,R} A_i^*=(W_i^*\otimes
I_\cR)K^{(m)}_{f,A,R},\qquad i=1,\ldots, n,
\end{equation*}
and the proof is complete.
  \end{proof}

Let $f:=\sum_{|\alpha|\geq1} a_\alpha X_\alpha$ be a positive
regular free holomorphic function and
 let $W_1,\ldots, W_n$  and $\Lambda_1,\ldots \Lambda_n$  be   the weighted left  and right  creation operators, respectively,
 associated with  the noncommutative domain ${\bf D}_f^m$.
 Let $\cP$ be a family of  noncommutative  polynomials   and define the noncommutative
 variety $\cV_{f,\cP}^m$ whose
  representation on a Hilbert space $\cH$ is
 $$
\cV_{f,\cP}^m(\cH):= \left\{(X_1,\ldots, X_n)\in {\bf D}_f^m(\cH):\
p(X_1,\ldots, X_n)=0\quad \text{ for any }\quad p\in \cP\right\}.
$$
We associate with $\cV_{f,\cP}^m$ the    operators $B_1,\ldots, B_n$
defined as follows. Consider  the subspaces
 \begin{equation*}
 \cM_\cP:=\overline{\text{\rm span}}\{W_\alpha p(W_1,\ldots, W_n) W_\beta(1): \ p\in \cP,
 \alpha, \beta\in \FF_n^+\}
 \end{equation*}
 and $\cN_\cP:=F^2(H_n)\ominus \cM_\cP$. Throughout this paper, unless otherwise specified,
  we assume that $\cN_\cP\neq
 \{0\}$.
  It is easy to see that $\cN_\cP$ is invariant under each
 operator $W_1^*,\ldots, W_n^*$ and $\Lambda^*_1,\ldots,
 \Lambda^*_n$.
  Define
 $$
 B_i:=P_{\cN_\cP}
 W_i|_{\cN_\cP}\quad \text{  and  }\quad C_i:=P_{\cN_\cP} \Lambda_i|_{\cN_\cP},\qquad i=1,\ldots, n,
 $$
  where
 $P_{\cN_\cP}$ is the orthogonal projection of $F^2(H_n)$ onto $\cN_{\cP}$.

  The $n$-tuple of operators
$B:=(B_1,\ldots, B_n)\in \cV_{f,\cP}^m(\cN_\cP)$ plays the role of
universal model
   for the noncommutative variety $\cV_{f,\cP}^m$.
The noncommutative variety algebra $\cA_n(\cV^m_{f,\cP})$ is the
norm-closed algebra generated by $B_1,\ldots, B_n$ and the identity,
while the Hardy algebra  $F_n^\infty(\cV^m_{f,\cP})$ is the
  $w^*$-version. More on these   Hardy algebras
associated with noncommutative varieties can be found in
\cite{Po-domains} and \cite{Po-Berezin}.

Let $(f, m, A, R)$ be a compatible quadruple. Assume that the
$n$-tuple $A:=(A_1,\ldots, A_n)\in B(\cH)^n$ has, in addition, the
property that
$$
p(A_1,\ldots, A_n)=0,\qquad p\in \cP.
$$
 Under these conditions,  the tuple  $q:=(f, m, A,R,\cP)$ is called compatible.
  We define the {\it (constrained) noncommutative  Berezin kernel} associated with the  tuple
  $q$ to be
the operator $K_q:\cH\to \cN_\cP\otimes \overline{ R^{1/2}(\cH)}$
given by
$$K_q:=(P_{\cN_{\cP}}\otimes I_{\overline{R^{1/2}(\cH)}})K^{(m)}_{f,A,R},
$$
where $K^{(m)}_{f,A,R}$ is the Berezin kernel associated with the
quadruple $(f,m, A,R)$  and defined by relation \eqref{Po-ker}.

\begin{lemma}\label{lemma2} Let   $K_q$ be  the noncommutative
Berezin kernel  associated with a compatible tuple
$q:=(f,m,A,R,\cP)$.  Then
\begin{equation*}
  K_q A_i^*=(B_i^*\otimes
I_\cR)K_q,\qquad i=1,\ldots, n,
\end{equation*}
where  $\cR:=\overline{ R^{1/2}(\cH)}$ and $(B_1,\ldots, B_n)$ is
the universal model  associated with the noncommutative variety
$\cV^m_{f,\cP}$. Moreover,
\begin{equation*}
  K_q^* K_q= \sum_{k=0}^\infty \left(\begin{matrix} k+m-1\\ m-1
\end{matrix}\right)
\Phi_{f,A}^k(R).
\end{equation*}
\end{lemma}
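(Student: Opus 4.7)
The plan is to reduce everything to the unconstrained Lemma \ref{lemma1} by showing that, under the compatibility of the tuple $q=(f,m,A,R,\cP)$, the range of the unconstrained Berezin kernel $K^{(m)}_{f,A,R}$ already lies inside $\cN_\cP\otimes \cR$, so that $K_q$ agrees with $K^{(m)}_{f,A,R}$ as an operator into that subspace.

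The first and main step is the following orthogonality statement: the range of $K^{(m)}_{f,A,R}$ is contained in $\cN_\cP\otimes \cR$, equivalently, it is orthogonal to all vectors of the form $W_\alpha\, p(W_1,\ldots,W_n)\, W_\beta(1)\otimes k$ with $p\in\cP$, $\alpha,\beta\in\FF_n^+$, $k\in\cR$. To prove this, I would first iterate the intertwining relation from Lemma \ref{lemma1} to obtain $(W_\gamma^*\otimes I_\cR)K^{(m)}_{f,A,R}=K^{(m)}_{f,A,R}A_\gamma^*$ for every $\gamma\in\FF_n^+$, and then extend by linearity to polynomials: for any noncommutative polynomial $q(X_1,\ldots,X_n)=\sum c_\gamma X_\gamma$,
\begin{equation*}
(q(W_1,\ldots,W_n)^*\otimes I_\cR)\,K^{(m)}_{f,A,R}=K^{(m)}_{f,A,R}\,q(A_1,\ldots,A_n)^*.
\end{equation*}
Applied to $p\in\cP$ (for which $p(A)=0$ by compatibility), this forces $(p(W)^*\otimes I_\cR)K^{(m)}_{f,A,R}=0$. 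Consequently, for any $h\in\cH$ and any $k\in\cR$,
\begin{equation*}
\bigl\langle K^{(m)}_{f,A,R}h,\ W_\alpha p(W) W_\beta(1)\otimes k\bigr\rangle
=\bigl\langle (W_\beta^* p(W)^* W_\alpha^*\otimes I_\cR)K^{(m)}_{f,A,R}h,\ 1\otimes k\bigr\rangle=0,
\end{equation*}
using the intertwining to move the $W_\alpha^*$ past $K^{(m)}_{f,A,R}$ and then $p(W)^*\otimes I_\cR$ to annihilate the result. Taking closed linear spans gives the claim.

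Once this is established, the two conclusions of the lemma follow readily. Since the range of $K^{(m)}_{f,A,R}$ lies in $\cN_\cP\otimes \cR$, the projection $P_{\cN_\cP}\otimes I_\cR$ acts as the identity on it, so $K_q=K^{(m)}_{f,A,R}$ as operators into $\cN_\cP\otimes\cR$. Therefore $K_q^*K_q=(K^{(m)}_{f,A,R})^*K^{(m)}_{f,A,R}$, and the second identity is immediate from the corresponding formula in Lemma \ref{lemma1}. For the intertwining, compute
\begin{equation*}
K_q A_i^*=(P_{\cN_\cP}\otimes I_\cR)K^{(m)}_{f,A,R}A_i^*=(P_{\cN_\cP}\otimes I_\cR)(W_i^*\otimes I_\cR)K^{(m)}_{f,A,R},
\end{equation*}
and use the fact recorded in the text that $\cN_\cP$ is invariant under each $W_i^*$. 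This invariance, together with $B_i^*=P_{\cN_\cP}W_i^*|_{\cN_\cP}$, yields $P_{\cN_\cP}W_i^*P_{\cN_\cP}=B_i^*P_{\cN_\cP}$; combined with the range inclusion of $K^{(m)}_{f,A,R}$ in $\cN_\cP\otimes\cR$, this gives $K_q A_i^*=(B_i^*\otimes I_\cR)K_q$.

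The only nontrivial part is the orthogonality step above, and even it is a straightforward consequence of the intertwining of Lemma \ref{lemma1} once the key observation $p(W)^*K^{(m)}_{f,A,R}=K^{(m)}_{f,A,R}p(A)^*=0$ is in place. No delicate convergence or domain issues arise, since the boundedness of $K^{(m)}_{f,A,R}$ and all the required identities have already been obtained in the unconstrained setting.
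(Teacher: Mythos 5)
Your proof is correct and follows essentially the same route as the paper: establish $\operatorname{range} K^{(m)}_{f,A,R}\subseteq \cN_\cP\otimes\cR$ by combining the intertwining relations of Lemma \ref{lemma1} with $p(A_1,\ldots,A_n)=0$ for $p\in\cP$, and then read off both identities from Lemma \ref{lemma1} since $K_q$ coincides with $K^{(m)}_{f,A,R}$ viewed as an operator into $\cN_\cP\otimes\cR$ and $\cN_\cP$ is invariant under each $W_i^*$. The only cosmetic difference is that the paper verifies the orthogonality by moving the operators onto $(K^{(m)}_{f,A,R})^*$, whereas you apply the adjoint intertwining directly to $K^{(m)}_{f,A,R}h$; the content is identical.
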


\begin{proof}
Using  Lemma \ref{lemma1}  and the fact that $p(A_1,\ldots,A_n)=0$
for all $p\in \cP$, we obtain
$$
\left<  K^{(m)}_{f,A,R}x, [W_\alpha p(W_1,\ldots, W_n)
W_\beta(1)]\otimes y\right>=\left<x,A_\alpha
p(A_1,\ldots,A_n)A_\beta
  (K^{(m)}_{f,A,R})^*(1\otimes y)\right>=0
$$
for any $x\in \cH$, $y\in \overline{ R^{1/2}(\cH)}$, and $p\in \cP$.
 Hence, we deduce  that
\begin{equation}\label{range}
\text{\rm range}\,K^{(m)}_{f,A,R}\subseteq \cN_\cP\otimes \overline{
R^{1/2}(\cH)}.
\end{equation}
Taking into account the definition of  the  constrained  Berezin
kernel $K_q:\cH\to \cN_\cP\otimes \overline{ R^{1/2}(\cH)}$,  one
can use Lemma \ref{lemma1} and relation \eqref{range} to complete
the proof.
\end{proof}

We introduce now  the  {\it noncommutative Berezin transform} ${\bf
B}_q$
 associated with the compatible  tuple $q:=(f,m,A,R,\cP)$ to be the operator
 ${\bf B}_q: B(\cN_\cP)\to B(\cH)$ given by
 $$
 {\bf B}_q[\chi]:=K_q^*[\chi\otimes I_\cR] K_q,\qquad \chi\in B(\cN_\cP).
 $$
 where  $\cR:=\overline{ R^{1/2}(\cH)}$.
 This transform will play an important role in this paper.
To  justify the terminology,  we shall  consider the particular case
when  the  $n$-tuple  $A:=(A_1,\ldots, A_n)$ has the joint spectral
radius
$$
r_f(A_1,\ldots, A_n):=\lim_{k\to\infty}\|\Phi_{f,A}^k(I)\|^{1/2k}<1.
 $$
 Then, as in the  particular case  considered in \cite{Po-Berezin}, one can show that
\begin{equation*}
 \left<{\bf B}_q[\chi]x,y\right>=
\left<\left( I-\sum_{|\alpha|\geq 1} {a}_{\widetilde\alpha}
C_\alpha^* \otimes A_{\widetilde\alpha} \right)^{-m}
 (\chi\otimes R)
  \left(
I-\sum_{|\alpha|\geq 1} a_{\widetilde\alpha} C_\alpha \otimes
A_{\widetilde\alpha}^* \right)^{-m}(1\otimes x), 1\otimes y\right>
\end{equation*}
  for any  $x,y\in\cH$, where $C_i:=P_{\cN_\cP} \Lambda_i|_{\cN_\cP }$ for $i=1,\ldots, n$ and $\widetilde \alpha$ is the reverse of $\alpha\in \FF_n^+$.
    For the benefit of the reader, we present a sketch of the proof.
  First, one can show that
  \begin{equation*}
  r\left(\sum_{|\alpha|\geq 1} a_{\widetilde\alpha}
C_\alpha \otimes A_{\widetilde\alpha}^*\right)\leq r_f(A_1,\ldots,
A_n)<1,
\end{equation*}
where $r(Y)$ is the usual spectral radius of a bounded operator $Y$.
Hence, the operator
\begin{equation*}
 \left(
I-\sum_{|\alpha|\geq 1} a_{\widetilde\alpha} C_\alpha \otimes
A_{\widetilde\alpha}^* \right)^{-1}= \sum_{k=0}^\infty
\left(\sum_{|\alpha|\geq 1} a_{\widetilde\alpha} C_\alpha \otimes
A_{\widetilde\alpha}^* \right)^k
\end{equation*} is well-defined,
where the convergence is in the operator norm topology. Consequently,
using the definition of $\Lambda_1,\ldots, \Lambda_n$ and   relation \eqref{Po-ker}, we obtain
\begin{equation*}
\begin{split}
 K_{f,T}^{(m)}h=
(I_{F^2(H_n)} \otimes R^{1/2})\left( I-\sum_{|\alpha|\geq 1}
a_{\widetilde\alpha} \Lambda_\alpha \otimes  T_{\widetilde\alpha}^*
\right)^{-m}(1\otimes h), \qquad h\in \cH.
\end{split}
\end{equation*}
 Combining the above-mentioned results with the fact that
$K_q:=(P_{\cN_{\cP}}\otimes I_{\overline{
R^{1/2}(\cH)}})K^{(m)}_{f,A,R}$, one can complete the proof of our
assertion.

We remark that in the particular case when: $n=m=1$, $f=X$,
     $\cH=\CC$,
  $A=\lambda\in \DD$, $R=I$, and $\cP=\{0\}$,  we recover the Berezin transform \cite{Be} of a bounded
 operator on the Hardy space $H^2(\DD)$, i.e.,
$$
{\bf B}_\lambda [g]=(1-|\lambda|^2)\left<g k_\lambda,
k_\lambda\right>,\quad g\in B(H^2(\DD)),
$$
where $k_\lambda(z):=(1-\overline{\lambda} z)^{-1}$ and  $z,
\lambda\in \DD$.

\smallskip

The following technical lemma is a slight extension of Lemma 1.4 and
2.2 from  \cite{Po-Berezin}, where the operator $D$ was positive. In
our extension, $D$ is a self-adjoint operator and   the condition
(a) is new. However,  since the proof is similar  to those from
\cite{Po-Berezin}, we shall omit it. A linear map $\varphi:B(\cH)\to
B(\cH)$ is called power bounded if there exists a constant $M>0$
such that $\|\varphi^k\|\leq M$ for any $k\in \NN$.

\begin{lemma}\label{ineq-lim}
Let $\varphi:B(\cH)\to B(\cH)$ be a positive  linear map and  let $
D\in B(\cH)$ be a self-adjoint operator and $m\in \NN$. Then the
following statements hold:
\begin{enumerate}
\item[(i)] If $\varphi$ is power bounded, then
$$(id-\varphi)^m(D)\geq 0\quad \text{ if and only if }\quad  (id-\varphi)^s(D)\geq
0, \quad s=1,2,\ldots,m.
$$
 \item[ii)] Under
  either one of the  conditions:
\begin{enumerate}
\item[(a)]
$(id-\varphi)^s(D)\geq 0$ for  any $s=1,\ldots, m$, or
\item[(b)] $\varphi$ is  power bounded and $(id-\varphi)^m(D)\geq 0$,
\end{enumerate}
the following limit exists and
$$
\lim_{k\to\infty} k^d\left< \varphi^k(id-\varphi)^d(D)h,h\right>=
\begin{cases} \lim\limits_{k\to\infty}\left< \varphi^k (D)h,h\right> &
\quad \text{ if } \ d=0\\
0 & \quad \text{ if } \ d=1,2,\ldots, m-1
\end{cases}
$$
  for  any \ $h\in \cH$.
  \end{enumerate}
\end{lemma}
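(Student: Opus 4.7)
My plan is to exploit two algebraic identities for $E_s:=(id-\varphi)^s(D)$. The first, obtained by iterating $E_{s-1}=E_s+\varphi(E_{s-1})$, reads
\[
E_{s-1} \;=\; \sum_{k=0}^{N-1}\varphi^k(E_s) \;+\; \varphi^N(E_{s-1}).
\]
The second, its Abel-summation refinement,
\[
\sum_{k=0}^{N-1}\binom{k+r-1}{r-1}\varphi^k(E_{s+r}) \;=\; E_s - \varphi^N\!\left(\sum_{j=0}^{r-1}\binom{N+j-1}{j}E_{s+j}\right),
\]
is proved by induction on $r$. These encode how partial sums of shifted iterates relate to $E_s$ and to polynomial-order boundary terms, and will do the heavy lifting in both parts.

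For part (i), the ``if'' direction takes $s=m$. For the converse I proceed by downward induction on $s$, starting at $s=m$. Assuming $E_{s+1}\geq 0$ inductively, the first identity displays $\sum_{k=0}^{N-1}\varphi^k(E_{s+1})$ as a monotone increasing, non-negative WOT sequence. The uniform bound $\|\varphi^N(E_s)\|\leq M\|E_s\|$ from power boundedness of $\varphi$ forces the sum to converge WOT to some $\sigma\geq 0$, whence $\varphi^N(E_s)\to E_s-\sigma$ in WOT. A mean-ergodic argument (available for power-bounded positive maps) identifies this limit with the $\varphi$-invariant component of $E_s$, which lies in the positive cone as a WOT-limit of Cesaro averages of the self-adjoint iterates $\varphi^k(E_s)$. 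This yields $E_s\geq 0$ and closes the induction.

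For part (ii), note first that under (a) (and under (b) after invoking part (i)) the operators $E_1,\ldots,E_m$ are non-negative, and for $d\leq m-1$ the sequence $\varphi^k(E_d)$ is monotone decreasing and non-negative, WOT-converging to some $F_d\geq 0$. The first identity gives $\sum_k\varphi^k(E_{d+1})=E_d-F_d$ in WOT, so $\langle\varphi^k(E_{d+1})h,h\rangle$ is a summable monotone decreasing non-negative sequence; the classical Abel lemma then yields $k\langle\varphi^k(E_{d+1})h,h\rangle\to 0$. Iterating this refinement and feeding it into the second identity with $r=m-d$, each boundary term $\binom{N+j-1}{j}\varphi^N(E_{d+j})$ with $j\geq 1$ vanishes in WOT, while the $j=0$ term pins down $k^d\langle\varphi^k(E_d)h,h\rangle\to 0$ for $d=1,\ldots,m-1$. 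The $d=0$ statement is the tautology $\lim=\lim$; under (b) the existence of $\lim\langle\varphi^k(D)h,h\rangle$ follows from boundedness and the monotonicity derived via part (i), and under (a) it is subsumed in the same analysis. The main obstacle is the mean-ergodic positivity step in part (i) and the bootstrap from first-order to $d$-th-order decay in part (ii); both turn on the interplay between positivity of $\varphi$, power boundedness (or the chain of positivities provided by (a)), and the self-adjoint structure of $D$.
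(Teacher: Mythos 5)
Your two summation identities are correct, and they are indeed the right tools (they are the finite forms of the formulas the paper uses in the proofs of its Theorems; note the paper omits its own proof of this lemma, deferring to \cite{Po-Berezin}). The genuine gap is in the inductive step of part (i). To get $E_s\geq 0$ you use only two inputs: $E_{s+1}\geq 0$ and the power-bound $\|\varphi^N(E_s)\|\leq M\|E_s\|$, and you conclude positivity of $L:=\text{WOT-}\lim_N\varphi^N(E_s)$ because it is ``a WOT-limit of Ces\`aro averages of the self-adjoint iterates.'' Self-adjointness of the averages gives self-adjointness of the limit, not positivity; the paper's own Theorem 2.1 shows that for a merely self-adjoint $Y$ with $(id-\varphi)^m(Y)\geq 0$ the limit $\lim_k\varphi^k(Y)$ is in general only self-adjoint. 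Moreover the two inputs you invoke cannot suffice: take $\varphi=id$ and any self-adjoint $Y\not\geq 0$; then $(id-\varphi)(Y)=0\geq 0$ and $\varphi$ is power bounded, yet $Y\not\geq 0$. A correct step must use that $E_s=(id-\varphi)(E_{s-1})$ with $E_{s-1}$ a bounded operator, i.e.\ your first identity one level down, $\sum_{k=0}^{N-1}\varphi^k(E_s)=E_{s-1}-\varphi^N(E_{s-1})$, whose right-hand side is uniformly bounded by power boundedness applied to $E_{s-1}$. Then either argue directly (if $\left<E_sh,h\right>=-c<0$, monotonicity $\varphi^k(E_s)\leq E_s$ gives $\sum_{k<N}\left<\varphi^k(E_s)h,h\right>\leq -Nc\to-\infty$, a contradiction), or note that $\frac1N\sum_{k<N}\varphi^k(E_s)=\frac1N\bigl(E_{s-1}-\varphi^N(E_{s-1})\bigr)\to 0$ in norm, which forces your limit $L$ to be $0$, hence $E_s=\sigma\geq 0$. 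As written (and with the tacit WOT-continuity of $\varphi$ that your ``mean-ergodic'' identification needs but the lemma does not assume), the pivotal step is unjustified.

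Part (ii) has the right toolkit (nonnegative, monotone, summable $\Rightarrow ka_k\to 0$), but the bootstrap to the rate $k^d$ is mis-assembled. With $s=d$, $r=m-d$ your second identity sums $\varphi^k(E_m)$, and $E_d$ enters only through the $j=0$ boundary term $\varphi^N(E_d)$, with no factor $N^d$; at best this identifies $\lim_N\varphi^N(E_d)$ and cannot ``pin down'' $k^d\left<\varphi^k(E_d)h,h\right>\to 0$. What is needed is the identity with $s+r=d$, i.e.\ $s=0$, $r=d$: $\sum_{k<N}\binom{k+d-1}{d-1}\varphi^k(E_d)=D-\sum_{j=0}^{d-1}\binom{N+j-1}{j}\varphi^N(E_j)$, where the $j\geq 1$ boundary terms are positive and may be dropped, and the $j=0$ term $\varphi^N(D)$ is exactly where power boundedness (case (b)) must enter to give the uniform bound $\sum_k\binom{k+d-1}{d-1}\left<\varphi^k(E_d)h,h\right><\infty$; combined with monotonicity of $\left<\varphi^k(E_d)h,h\right>$ (valid for $d\leq m-1$) this yields the stated decay. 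Your argument never confronts the $\varphi^N(D)$ term, and your claim that the $j\geq 1$ boundary terms ``vanish in WOT'' presupposes decay $N^j\left<\varphi^N(E_{d+j})h,h\right>\to 0$ which at that stage you have only with exponent $1$. Finally, under hypothesis (a) the bottom-level facts you use ($F_0$ exists; $\left<\varphi^k(E_1)h,h\right>$ summable) are not ``subsumed in the same analysis'': without power boundedness $\left<\varphi^k(D)h,h\right>$ is decreasing but not obviously bounded below (this is precisely the delicate new point in extending the lemma from positive to self-adjoint $D$ under (a)), so this case needs its own argument rather than a gloss.
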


In what follows we also need the following result.
    For  information  on completely bounded (resp. positive) maps,  we refer
 to \cite{Pa-book} and  \cite{Pi}.

\begin{lemma}\label{wot-cp}  Let $f:=\sum_{|\alpha|\geq 1} a_\alpha X_\alpha$ be
a positive regular free holomorphic function
 and let $A:=(A_1,\ldots, A_n)\in B(\cH)^n$ be an $n$-tuple of
 operators such that
$\sum_{|\alpha|\geq 1} a_\alpha A_\alpha A_\alpha^* $ is convergent
in the weak operator topology.
 Then the map
$\Phi_{f,A}:B(\cH)\to B(\cH)$, defined by
$$\Phi_{f,A}(X)=\sum_{|\alpha|\geq 1} a_\alpha A_\alpha
XA_\alpha^*,\qquad X\in B(\cH),
$$
where the convergence is in the weak operator topology, is a
 completely positive linear map which is WOT-continuous on bounded
 sets. Moreover, if $0<r<1$, then
$$\Phi_{f,A}(X)=\text{\rm WOT-}\lim_{r\to1} \Phi_{f,rA}(X), \qquad
X\in B(\cH).
$$
\end{lemma}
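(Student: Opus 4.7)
The plan is to factor $\Phi_{f,A}$ through a single row contraction and deduce all properties from that factorization. Specifically, since $\sum_{|\alpha|\geq 1}a_\alpha A_\alpha A_\alpha^*$ converges in WOT to some positive operator $P$, and the partial sums are increasing self-adjoint operators, the convergence is in fact in SOT. Enumerating $\{\alpha\in\FF_n^+:|\alpha|\geq 1\}$ and letting $\widetilde\cH:=\bigoplus_{|\alpha|\geq 1}\cH$, I would define the row operator $R:\widetilde\cH\to\cH$ by $R(\oplus h_\alpha):=\sum_{|\alpha|\geq 1}\sqrt{a_\alpha}\,A_\alpha h_\alpha$. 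The finite truncations satisfy $R_NR_N^*=\sum_{1\leq|\alpha|\leq N}a_\alpha A_\alpha A_\alpha^*\leq P$, so $\|R_N\|\leq\|P\|^{1/2}$, and a standard argument shows $R$ is well defined and bounded with $RR^*=P$. For any $X\in B(\cH)$, let $\widetilde X:=\bigoplus_{|\alpha|\geq 1}X$ acting on $\widetilde\cH$. Then the identity
\begin{equation*}
R\,\widetilde X\, R^*=\sum_{|\alpha|\geq 1}a_\alpha A_\alpha X A_\alpha^*
\end{equation*}
holds in the WOT sense, which both proves that the series defining $\Phi_{f,A}(X)$ converges in WOT and gives the norm bound $\|\Phi_{f,A}(X)\|\leq\|P\|\,\|X\|$.

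From the factorization $\Phi_{f,A}(X)=R\widetilde X R^*$, complete positivity is immediate: the map $X\mapsto\widetilde X$ is a unital $*$-homomorphism (hence completely positive), and conjugation by $R$ preserves complete positivity. For WOT-continuity on bounded sets, suppose $X_i\to X$ in WOT with $\|X_i\|\leq M$. Given $\xi,\eta\in\widetilde\cH$ with $\sum_\alpha\|\xi_\alpha\|^2<\infty$ and $\sum_\alpha\|\eta_\alpha\|^2<\infty$, dominated convergence applied to the sum $\langle\widetilde X_i\xi,\eta\rangle=\sum_\alpha\langle X_i\xi_\alpha,\eta_\alpha\rangle$ (bounded by $M\sum_\alpha\|\xi_\alpha\|\|\eta_\alpha\|<\infty$) yields $\widetilde X_i\to\widetilde X$ in WOT on $\widetilde\cH$. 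Since $Y\mapsto RYR^*$ is trivially WOT-continuous (for fixed $h,k$ the functional $\langle RYR^*h,k\rangle=\langle YR^*h,R^*k\rangle$ is a WOT functional), we conclude $\Phi_{f,A}(X_i)\to\Phi_{f,A}(X)$ in WOT.

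For the last statement, fix $X\in B(\cH)$ and $h,k\in\cH$. Since $(rA)_\alpha=r^{|\alpha|}A_\alpha$, we have $\Phi_{f,rA}(X)=\sum_{|\alpha|\geq 1}r^{2|\alpha|}a_\alpha A_\alpha X A_\alpha^*$, and
\begin{equation*}
\bigl\langle\bigl[\Phi_{f,rA}(X)-\Phi_{f,A}(X)\bigr]h,k\bigr\rangle=\sum_{|\alpha|\geq 1}(r^{2|\alpha|}-1)\,a_\alpha\,\langle A_\alpha X A_\alpha^* h,k\rangle.
\end{equation*}
By Cauchy--Schwarz, $|\langle A_\alpha XA_\alpha^* h,k\rangle|\leq\|X\|\,\|A_\alpha^* h\|\|A_\alpha^* k\|$, and another application of Cauchy--Schwarz on the index set shows
\begin{equation*}
\sum_{|\alpha|\geq 1}a_\alpha\|A_\alpha^* h\|\|A_\alpha^* k\|\leq\langle Ph,h\rangle^{1/2}\langle Pk,k\rangle^{1/2}<\infty.
\end{equation*}
Since $|r^{2|\alpha|}-1|\leq 2$ and $r^{2|\alpha|}-1\to 0$ pointwise as $r\to 1$, dominated convergence gives the claimed WOT limit.

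The only mildly technical point is establishing the boundedness of the row operator $R$ and the validity of the factorization $\Phi_{f,A}(X)=R\widetilde X R^*$ in WOT — once this is in hand every conclusion of the lemma follows by short general arguments, and I do not expect any genuine obstacle.
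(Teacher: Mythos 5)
Your proof is correct; there is no gap. It does, however, organize the argument differently from the paper. The paper works directly with the truncations $\Phi^{(k)}_{f,A}(X)=\sum_{1\leq|\alpha|\leq k}a_\alpha A_\alpha XA_\alpha^*$: a Cauchy--Schwarz estimate over finite index sets gives WOT convergence of the series together with the inequality $|\langle\Phi_{f,A}(X)x,y\rangle|\leq\|X\|\,\langle\Phi_{f,A}(I)x,x\rangle^{1/2}\langle\Phi_{f,A}(I)y,y\rangle^{1/2}$; complete positivity is then obtained because $\Phi_{f,A}$ is a WOT limit of the completely positive maps $\Phi^{(k)}_{f,A}$; and a tail estimate $\sum_{|\alpha|>N_0}|\langle a_\alpha r^{|\alpha|}A_\alpha XA_\alpha^*x,y\rangle|\leq\epsilon\|X\|$, uniform in $r\in[0,1]$, yields both the WOT continuity on bounded sets and the $r\to1$ statement in one stroke. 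You instead exhibit a Stinespring-type factorization $\Phi_{f,A}(X)=R\widetilde XR^*$ through the row operator $R$ with $RR^*=\Phi_{f,A}(I)$, which makes complete positivity and the bound $\|\Phi_{f,A}(X)\|\leq\|\Phi_{f,A}(I)\|\|X\|$ immediate, with no limit argument; the continuity assertions then reduce to WOT continuity of the amplification $X\mapsto\widetilde X$ on bounded sets and a dominated-convergence/tail argument, which is ultimately the same Cauchy--Schwarz mechanism the paper uses ($\sum a_\alpha\|A_\alpha^*h\|\|A_\alpha^*k\|\leq\langle Ph,h\rangle^{1/2}\langle Pk,k\rangle^{1/2}$). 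Two small cosmetic points: the initial observation that the convergence of $\sum a_\alpha A_\alpha A_\alpha^*$ is actually SOT is true but not needed, and since WOT convergence on bounded sets concerns nets, it is cleaner to replace the appeal to dominated convergence by the explicit split into a finite sum plus a uniformly small tail (which your own estimates already provide); in particular your route, like the paper's, uses only positivity of the $a_\alpha$ and so also covers the remark following the lemma.
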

\begin{proof}
Note that, for any $x,y\in \cH$ and any finite subset
$\Lambda\subset \{\alpha\in \FF_n^+:\ |\alpha|\geq 1\}$, we have
\begin{equation*}
\begin{split}
\sum_{\alpha\in \Lambda}\left| \left<a_\alpha A_\alpha
XA_\alpha^*x,y\right>\right| &\leq \|X\|\sum\limits_{\alpha\in
\Lambda} a_\alpha
\|A_\alpha^*x\|\|A_\alpha^*y\|\\
&\leq \|X\|\left(\sum\limits_{\alpha\in \Lambda} a_\alpha
\|A_\alpha^*x\|^2\right)^{1/2}\left(\sum\limits_{\alpha\in \Lambda}
a_\alpha \|A_\alpha^*y\|^2\right)^{1/2}.
\end{split}
\end{equation*}
Now, since $\sum_{|\alpha|\geq 1} a_\alpha A_\alpha A_\alpha^* $ is
convergent in the weak operator topology it is easy to see that the
series $\Phi_{f,A}(X)=\sum_{|\alpha|\geq 1} a_\alpha A_\alpha
XA_\alpha^*$ convergence is in the weak operator topology. Moreover,
the above-mentioned  inequality  is true for any subset $\Lambda $
in $ \{\alpha\in \FF_n^+:\ |\alpha|\geq 1\}$. In particular, we
deduce that
$$\left|\left<\Phi_{f,A}(X)x,y\right>\right|\leq
\|X\|\left<\Phi_{f,A}(I)x,x\right>^{1/2}
\left<\Phi_{f,A}(I)y,y\right>^{1/2},\qquad x,y\in \cH.
$$
On the other hand, since the map $\Phi^{(k)}_{f,A}(X):=\sum_{1\leq
|\alpha|\leq k} a_\alpha A_\alpha XA_\alpha^*$, $X\in B(\cH)$, is
completely positive  for each $k\in \NN$ and
$\Phi_{f,A}(X)=\text{\rm WOT-}\lim_{k\to\infty}\Phi^{(k)}_{f,A}(X)$, we
deduce that $\Phi_{f,A}$ is a completely positive map on $B(\cH)$.
Since $\sum\limits_{|\alpha|\geq 1} a_\alpha A_\alpha A_\alpha^* $
is convergent in the weak operator topology, for any $\epsilon>0$
and $x,y\in \cH$, there is $N_0\in \NN$ such that
$$
\sum_{|\alpha|>N_0}\left<a_\alpha A_\alpha
A_\alpha^*x,x\right><\epsilon \quad \text{ and } \quad
\sum_{|\alpha|>N_0}\left<a_\alpha A_\alpha
A_\alpha^*y,y\right><\epsilon.
$$
Using the above-mentioned inequalities, we deduce that
$$
\sum_{|\alpha|>N_0}\left|\left<a_\alpha A_\alpha
XA_\alpha^*x,y\right>\right|\leq \epsilon \|X\|.
$$
Now, it is easy to see that $\Phi_{f, A}$ is WOT-continuous on bounded
 sets. On the other hand, we also have $
\sum_{|\alpha|>N_0}\left|\left<a_\alpha r^{|\alpha|}A_\alpha
XA_\alpha^*x,y\right>\right|\leq \epsilon \|X\|$ for any $r\in
[0,1]$. This can be used to show that $\Phi_{f,A}(X)=\text{\rm
WOT-}\lim_{r\to1} \Phi_{f,rA}(X)$ for any $ X\in B(\cH).$ The proof
is complete.
\end{proof}
We remark that Lemma \ref{wot-cp} remains true if
$\{a_\alpha\}_{|\alpha|\geq 1}$  is just  a sequence of positive
numbers
 and $A:=(A_1,\ldots, A_n)\in B(\cH)^n$ is  an $n$-tuple of
 operators such that
$\sum_{|\alpha|\geq 1} a_\alpha A_\alpha A_\alpha^* $ is convergent
in the weak operator topology.

\bigskip

We denote by  $C (f,A)^+$ the cone of all positive operators $D\in B(\cH)$ such that
$$(id-\Phi_{f, A})^s(D)\geq 0\quad \text{ for  } \  s=1,\ldots,
m.
$$
 We denote by  $ C_{rad} (f,A)^+$   the set of all    operators
 $D\in C  (f,A)^+$  such that  there is $\delta\in (0,1)$ with the
 property that
$D\in C (f,rA)^+$ for any   $r\in (\delta, 1]$.

A few examples are necessary. Note that if $m=1$ then  we always
have $C (f,A)^+=C_{rad} (f,A)^+$.  We  remark  that if $m\geq 2$ and
$p=a_1X_1+\cdots + a_nX_n$, $a_i>0$, then we also have $C
(p,A)^+=C_{rad} (p,A)^+$. Indeed, it is enough to see that if
   $0<r\leq 1$,  then
\begin{equation*}
\begin{split}
(id-\Phi_{p,rA})^k(D)&=\left[(id-\Phi_{p,A})+(1-r)\Phi_{p,A}\right]^k(D)\\
&= \sum_{j=0}^k \left(\begin{matrix} k\\j
\end{matrix}\right) (1-r)^{k-j}\Phi_{p,A}^{k-j}(id-\Phi_{p,A})^j(D)
\end{split}
\end{equation*}
for any $k=1,\ldots,m$. Since $(id-\Phi_{p,A})^j(D)\geq 0$ for
$j=1,\ldots,m$ and using the fact that  $\Phi_{p,A}^j$ is a positive
linear map, we deduce that $(id-\Phi_{p,rA})^k(D)\geq 0$ for
$k=1,\ldots,m$ and $r\in (0,1]$, which proves our assertion. Note
also that when $m\geq 1$ and   $q$ is any positive regular
noncommutative polynomial so that, for  each  $s=1,\ldots, m$,
$(id-\Phi_{q, A})^s(D)$ is a positive invertible operator, then
$D\in C_{rad} (q,A)^+$.

We say that
  ${\bf D}_f^m (\cH)$  is a {\it radial domain} if  there exists  $\delta\in (0,1)$ such that
$(rW_1,\ldots, rW_n)\in {\bf D}_f^m (F^2(H_n))$ for any
    $r\in (\delta ,1]$, where $(W_1,\ldots, W_n)$ is the universal model associated with ${\bf
    D}_f^m$. We remark  that the notion of radial domain does not
    depend on the Hilbert space $\cH$.
      Note that if $m=1$,
then ${\bf D}_f^1(\cH)$ is always a   radial  domain. This case was
extensively studied in \cite{Po-domains}. When $m\geq 2$, we  point
out    the particular case    $p:=a_1X_1+\cdots
+a_n X_n$, $a_i>0$, when  ${\bf D}_p^m(\cH)$  is also a radial domain.

 Now, we are ready to   show that, for radial domains ${\bf D}_f^m (\cH)$,
  the elements of the noncommutative cone  $ C_{rad} (f,A)^+$
are in one-to-one correspondence
 with the elements of a class
of noncommutative Berezin transforms.
\begin{theorem}\label{poisson} Let  ${\bf D}_f^m (\cH)$ be a radial domain,
where  $f:=\sum_{ |\alpha|\geq 1} a_\alpha X_\alpha$
 is    a positive regular  free holomorphic function and $m\geq 1$.
Let $\cP$ be a family of
noncommutative homogeneous
 polynomials  and let  $B:=(B_1,\ldots, B_n)$ be
the universal
  model associated with the noncommutative variety
 $\cV^m_{f,\cP}$. If $A:=(A_1,\ldots,
A_n)\in B(\cH)^n$ is such that $\sum_{|\alpha|\geq 1} a_\alpha A_\alpha A_\alpha^* $ \ is
SOT-convergent and
$p(A_1,\ldots, A_n)=0$, \  $p\in \cP,
$
then there is a bijection
$$\Gamma: CP(A,\cV^m_{f,\cP})\to C_{rad}(f,A)^+, \qquad
 \Gamma(\varphi):= \varphi (I),
 $$
  where  $  CP(A,\cV^m_{f,\cP})$ is   the set
  of all completely positive linear maps \ $\varphi : \cS_{f,\cP}
 \to B(\cH)$ such that
$$ \varphi(B_\alpha B_\beta^*) =A_\alpha \varphi(I) A_\beta^*,
 \qquad \alpha, \beta \in \FF_n^+,
 $$
 where $\cS_{f,\cP}:=
\overline{\text{\rm span}} \{ B_\alpha B_\beta^*:\ \alpha,\beta\in
\FF_n^+\}$. Moreover,  if $D\in C_{rad}(f,A)^+$, then
$\Gamma^{-1}(D)$ coincides
 with the noncommutative
Berezin  transform  associated with $q:=(f, m, A,R,\cP)$ and defined by
$$
  \overline{{\bf B}}_{q}[\chi]:=\lim_{r \to 1}
 K_{q_r }^* (\chi\otimes I) K_{q_r} ,\qquad \chi\in
   \cS_{f,\cP},
$$
where  $q_r:=(f, m, rA,R_r,\cP)$ and $R_r:=(id-\Phi_{f,rA})^m(D)$,
$r\in [0,1]$, and the limit exists in the operator norm topology.
\end{theorem}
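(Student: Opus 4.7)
The plan is to prove the bijection in three moves: verify that $\Gamma$ is well-defined, show injectivity, and construct $\Gamma^{-1}$ explicitly as the Berezin-transform limit. For \textbf{well-definedness}, starting from $\varphi\in CP(A,\cV^m_{f,\cP})$ with $D:=\varphi(I)$, the radial-domain hypothesis yields $(rW_1,\ldots,rW_n)\in{\bf D}_f^m(F^2(H_n))$ for $r\in(\delta,1]$; the homogeneity of $\cP$ then gives $p(rB_1,\ldots,rB_n)=r^{\deg p}p(B_1,\ldots,B_n)=0$, hence $(rB_1,\ldots,rB_n)\in\cV^m_{f,\cP}(\cN_\cP)$ and $(id-\Phi_{f,rB})^s(I)\geq 0$ for $s=1,\ldots,m$. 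The intertwining relation $\varphi(B_\alpha B_\beta^*)=A_\alpha D A_\beta^*$, extended to the weighted infinite sums by Lemma \ref{wot-cp}, gives $\varphi\bigl((id-\Phi_{f,rB})^s(I)\bigr)=(id-\Phi_{f,rA})^s(D)$, and complete positivity of $\varphi$ forces the right-hand side to be positive, proving $D\in C_{rad}(f,A)^+$.

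\textbf{Injectivity} is immediate: the intertwining identity pins down the value of any map in $CP(A,\cV^m_{f,\cP})$ on each monomial $B_\alpha B_\beta^*$ from $\varphi(I)$ alone, and $\cS_{f,\cP}$ is the norm-closed linear span of such monomials; together with the bound $\|\varphi\|=\|\varphi(I)\|$ (valid for CP maps), this forces two maps agreeing at $I$ to coincide on $\cS_{f,\cP}$.

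For \textbf{surjectivity and the explicit formula}, given $D\in C_{rad}(f,A)^+$ and $r\in(\delta,1)$, set $R_r:=(id-\Phi_{f,rA})^m(D)\geq 0$. The generating-function identity $(1-z)^m\sum_{k\geq 0}\binom{k+m-1}{m-1}z^k=1$, combined with Lemma \ref{ineq-lim}(ii)(a) applied to $\Phi_{f,rA}$ (whose hypothesis is exactly $D\in C(f,rA)^+$), shows that
\[
\sum_{k=0}^\infty\binom{k+m-1}{m-1}\Phi_{f,rA}^k(R_r)
\]
is SOT-convergent and dominated by $D$; consequently $q_r:=(f,m,rA,R_r,\cP)$ is a compatible tuple and Lemma \ref{lemma2} produces a Berezin kernel $K_{q_r}$ satisfying $K_{q_r}(rA_i)^*=(B_i^*\otimes I)K_{q_r}$. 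The formula $\varphi_r(\chi):=K_{q_r}^*(\chi\otimes I)K_{q_r}$ defines a completely positive map with $\|\varphi_r\|\leq\|D\|$, and the intertwining gives $\varphi_r(B_\alpha B_\beta^*)=r^{|\alpha|+|\beta|}A_\alpha(K_{q_r}^*K_{q_r})A_\beta^*$. The telescoping identity combined with Lemma \ref{ineq-lim} identifies $\lim_{r\to 1}K_{q_r}^*K_{q_r}=D$ in operator norm, whence $\varphi_r(B_\alpha B_\beta^*)\to A_\alpha D A_\beta^*$ in norm; the uniform bound $\|\varphi_r\|\leq\|D\|$ then extends this to norm convergence on all of $\cS_{f,\cP}$, and the limit $\overline{\bf B}_q:=\lim_{r\to 1}\varphi_r$ is a CP map with $\overline{\bf B}_q(I)=D$ and the required intertwining, giving $\Gamma^{-1}(D)=\overline{\bf B}_q$.

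The \textbf{principal obstacle} is the norm identification $\lim_{r\to 1}K_{q_r}^*K_{q_r}=D$. The telescoping produces $D$ minus residual tail terms of the form $\binom{N+j}{m-1}\Phi_{f,rA}^{N+j}(D)$ ($j=0,\ldots,m-1$), whose vanishing in the iterated limit $N\to\infty$, $r\to 1^-$ rests on the combined strength of the radial hypothesis on ${\bf D}_f^m$, the homogeneity of $\cP$, and the higher-order positivity encoded in $D\in C(f,rA)^+$. Once that norm limit is pinned down, the rest of the proof reduces to routine book-keeping about uniform bounds and density of the monomials in $\cS_{f,\cP}$.
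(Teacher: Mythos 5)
Your skeleton is the paper's: apply the CP map to $(id-\Phi_{f,rB})^s(I)$ for well-definedness, use norm density of the monomials $B_\alpha B_\beta^*$ for injectivity, and build $\Gamma^{-1}$ from the kernels $K_{q_r}$. But the step you label the \emph{principal obstacle} and leave unresolved is precisely the heart of the surjectivity half, and your description of it is off in two ways. First, no iterated limit is needed: the identity $K_{q_r}^*K_{q_r}=D$ holds \emph{exactly} for every fixed $r\in(\delta,1)$, not only as $r\to1$. Indeed, Lemma \ref{lemma2} gives $K_{q_r}^*K_{q_r}=\sum_{k\geq0}\binom{k+m-1}{m-1}\Phi_{f,rA}^k(R_r)$; the binomial telescoping identity writes the partial sums as $D$ minus $\sum_{j=0}^{m-1}\binom{k+j}{j}\Phi_{f,rA}^{k+1}(id-\Phi_{f,rA})^j(D)$; Lemma \ref{ineq-lim}, part (ii)(a), applied to $\varphi=\Phi_{f,rA}$ (its hypothesis is exactly $D\in C(f,rA)^+$, which is what membership in $C_{rad}(f,A)^+$ supplies) kills the terms with $j\geq1$; and the $j=0$ term vanishes because $\Phi_{f,rA}^k(D)\leq r^{2k}\Phi_{f,A}^k(D)\leq r^{2k}D$, using only $\Phi_{f,A}(D)\leq D$. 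Second, neither the radial hypothesis on ${\bf D}_f^m$ nor the homogeneity of $\cP$ enters this computation: radiality is used in the opposite direction of the bijection (to have $(id-\Phi_{f,rB})^s(I)\geq0$, given by a norm-convergent series and hence lying in $\cS_{f,\cP}$), while homogeneity is needed only to get $p(rA_1,\ldots,rA_n)=0$ so that $q_r$ is a compatible tuple. Until $K_{q_r}^*K_{q_r}=D$ is actually established, your claims that $\varphi_r(B_\alpha B_\beta^*)\to A_\alpha DA_\beta^*$ and $\overline{{\bf B}}_q(I)=D$ are unsupported, so as written this is a genuine gap --- though one that closes with the computation above, after which your uniform-bound plus density argument finishes the proof exactly as the paper does.

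A smaller point in the well-definedness direction: membership in $C_{rad}(f,A)^+$ also requires $(id-\Phi_{f,A})^s(D)\geq0$ (the case $r=1$), and you cannot get it by applying $\varphi$ to $(id-\Phi_{f,B})^s(I)$, since that element is defined only by an SOT-convergent series and need not belong to the norm-closed operator system $\cS_{f,\cP}$. After obtaining $(id-\Phi_{f,rA})^s(D)\geq0$ for $r\in(\delta,1)$ you must pass to the limit $r\to1$ in the weak operator topology via Lemma \ref{wot-cp}, as the paper does; this is a short but necessary extra step beyond what you wrote.
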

\begin{proof} We recall  that the subspace $\cN_\cP\neq \{0\}$ is
invariant under each operator $W_1^*,\ldots, W_n^*$ and
$B_i:=P_{\cN_\cP}W_i|_{\cN_\cP}$, $i=1,\ldots, n$. Setting
$B:=(B_1,\ldots, B_n)$ and taking into account that
$\Phi_{f,W}(I)\leq I$,  we deduce that
 $\Phi_{f,B}(I)\leq I$ and, consequently,  $\Phi_{f,rB}(I)=\sum_{k=1}^\infty
\sum_{|\alpha|=k} a_\alpha r^{|\alpha|} B_\alpha B_\alpha ^*\leq
 I$, where the convergence is in the operator norm topology. This implies  $\Phi_{f,rB}(I)\in  \cS_{f,\cP}$  for any $r\in [0,1)$.
The fact that  ${\bf
 D}_{f}^m$ is a radial domain  implies   $(rW_1,\ldots, rW_n)\in {\bf D}_{f
 }^m(F^2(H_n))$, $r\in (\delta, 1)$, for some $\delta\in (0,1)$  and, consequently,    $(id-\Phi_{f,rB})^s(I)\geq 0$ for $s=1,\ldots,m$ and $r\in (\delta,1)$.
Since
$$
\Phi_{f,rB}^j(I)=\sum_{k=1}^\infty \sum_{|\alpha|=k} a_\alpha
r^{|\alpha|} B_\alpha  \Phi_{f,rB}^{j-1}(I)B_\alpha^*,\qquad j\in \NN,
$$
 and
$\|\Phi_{f,rB}^k(I)\|\leq 1$ for any $k\in \NN$, it is clear that
$\Phi_{f,rB}^j(I)\in \cS_{f,\cP}$. Taking into account that
$$
  (id-\Phi_{f,r  B})^s(I)=\sum_{j=0}^s (-1)^j
\left(\begin{matrix} s\\j\end{matrix}\right) \Phi_{f,r B}^j(I), \qquad j\in \NN,
$$
 we deduce that  $(id-\Phi_{f,r
B})^s(I)\in \cS_{f,\cP}$ for $s=1,\ldots,m$. Now, assume that
$\varphi : \cS_{f,\cP}\to B(\cH) $  is a completely positive linear  map such
that
$$ \varphi(B_\alpha B_\beta^*) =A_\alpha \varphi(I) A_\beta^*,
 \qquad \alpha, \beta \in \FF_n^+.
 $$
Then, setting $D:=\varphi(I)$, we deduce that $D\geq 0$ and
$$
(id-\Phi_{f,rA})^s(D)=\varphi \left(
(id-\Phi_{f,rB})^s(I)\right)\geq 0,\qquad r\in (\delta,1),
$$
for any $ s=1,\ldots,m.$ Since  the series
$\sum\limits_{|\alpha|\geq 1} a_\alpha A_\alpha A_\alpha^* $ \ is
SOT-convergent  one can  use Lemma \ref{wot-cp} to deduce  that
$\Phi^k_{f,A}(D)=\text{\rm WOT-}\lim_{r\to1}\Phi_{f,rA}^k(D)$ for
$k\in\NN$ and, moreover,
$$(id-\Phi_{f,A})^s(D)=\text{\rm
WOT-}\lim_{r\to1}(id-\Phi_{f,rA})^s(D)\geq 0
$$
for any $s=1,\ldots,m$. This shows that $D\in  C_{rad}(f,A)^+$. To
prove that $\Gamma$ is one-to-one, let $\varphi_1$ and $\varphi_2$
be completely positive linear maps on $\cS_{f,\cP}$ such that
$\varphi_j(B_\alpha B_\beta^*)=A_\alpha \varphi_i(I)A_\beta$,
$\alpha, \beta\in \FF_n^+$, and assume that
$\Gamma(\varphi_1)=\Gamma(\varphi_2)$, i.e.,
$\varphi_1(I)=\varphi_2(I)$. Then we have $\varphi_1(B_\alpha
B_\beta^*)=\varphi_2(B_\alpha B_\beta^*)$ for $\alpha, \beta\in
\FF_n^+$. Taking into account  the continuity of $\varphi_1$ and
$\varphi_2$ in the operator norm, we deduce that
$\varphi_1=\varphi_2$.

 To prove surjectivity, fix
$D\in C_{rad}(f,A)^+$. Then
 $D\in  B(\cH)$  is a positive operator with the property
  that  there is $\delta\in (0,1)$ such that
 $(id-\Phi_{f,rA})^s(D)\geq 0$ for any  $s=1,\ldots, m$ and $r\in (\delta,
 1)$. Since the set $\cP$ consists of homogeneous noncommutative polynomials , we have
 $p(rA_1,\ldots, rA_n)=0$ for any $p\in \cP$ and $r\in (\delta, 1)$. We show now that, for each $r\in (\delta, 1)$,
 the tuple $q_r:=(f,m,rA,R_r,\cP)$, where $R_r:=(id-\Phi_{f,rA})^m(D)$,  is compatible. Indeed, we can use
the equality
$$\left(\begin{matrix}
i+j\\ j
\end{matrix}\right)-\left(\begin{matrix}
i+j-1\\ j
\end{matrix}\right)=\left(\begin{matrix}
i+j-1\\ j-1
\end{matrix}\right),\qquad i,j\in \NN,
$$
and Lemma \ref{ineq-lim} to  obtain
\begin{equation*}\begin{split}
\sum_{k=0}^\infty \left(\begin{matrix} k+m-1\\ m-1
\end{matrix}\right) \Phi_{f,rA}^k(R_r )&=
D- \text{\rm WOT-}\lim_{k\to \infty}\sum_{j=0}^{m-1}\left(\begin{matrix} k+j\\ j
\end{matrix}\right)\Phi_{f,rA}^{k+1}(id-\Phi_{f,rA})^j(D)\\
&= D-\text{\rm WOT-}\lim_{k\to \infty}  \Phi_{f,rA}^k(D).
\end{split}
\end{equation*}
Since $\Phi_{f,rA}^k(D)\leq r^{2k}\Phi_{f,A}^k(D)\leq  r^{2k} D$, we
have $\text{\rm WOT-}\lim_{k\to\infty}  \Phi_{f,rA}^k(D)=0$. Therefore, we deduce that
\begin{equation}
\label{Kr*}
\sum_{k=0}^\infty  \left(\begin{matrix} k+m-1\\ m-1
\end{matrix}\right)\Phi_{f,rA}^k(R_r )=D, \qquad r\in (\delta, 1).
\end{equation}
According  to Lemma \ref{lemma2},  the constrained noncommutative Berezin kernel  \ $K_{q_r}$,
$r\in (\delta,1)$,
associated with the compatible tuple $q_r:=(f,rA,R_r,\cP)$,  has the property that
\begin{equation} \label{KK}
  K_{q_r} (r A_i^*)=(B_i^*\otimes
I_\cH)K_{q_r},\qquad i=1,\ldots, n,
\end{equation}
where $(B_1,\ldots, B_n)$ is the $n$-tuple of constrained weighted
left creation operators associated with the noncommutative variety
$\cV^m_{f,\cP}$, and
\begin{equation*}
  K_{q_r}^* K_{q_r}= \sum_{k=0}^\infty \left(\begin{matrix} k+m-1\\ m-1
\end{matrix}\right)
\Phi_{f,rA}^k(R_r),\qquad r\in (\delta, 1),
\end{equation*}
where $R_r:=(id-\Phi_{f,rA})^m(D)$.
Hence and using relation \eqref{Kr*},  we obtain
\begin{equation}\label{K*K}
K_{q_r}^* K_{q_r}= D, \qquad r\in (\delta, 1).
\end{equation}
For each  $r\in (\delta, 1)$, define the operator ${\bf B}_{q_r} : \cS_{f,\cP}  \to B(\cH)$
 by setting
 \begin{equation}\label{pois}
 {\bf B}_{q_r}(\chi):= K_{q_r}^* (\chi\otimes I_\cH) K_{q_r},
 \qquad \chi\in \cS_{f,\cP}.
 \end{equation}
  Using relation \eqref{KK} and \eqref{K*K},   we have
\begin{equation}\label{KSK}
K_{q_r}^* (B_\alpha B_\beta^*\otimes I) K_{q_r}= r^{|\alpha|+ |\beta|} A_\alpha DA_\beta^*,\qquad
\alpha,\beta\in \FF_n^+, \ r\in (\delta, 1).
\end{equation}
Hence,  and using relations \eqref{K*K} and  \eqref{pois}, we infer
that ${\bf B}_{q_r}$   is a completely positive  linear map with
${\bf B}_{q_r}(I)=D$ and  $\|{\bf B}_{q_r}\|=\|D\|$ for
$r\in (\delta, 1)$.

Now, we show that $\lim_{r \to 1} {\bf B}_{q_r}(\chi)$ exists in the operator norm topology for each $\chi\in \cS_{f,\cP}$.
Given  a polynomial $\varphi(B_1,\ldots, B_n):= \sum_{\alpha,\beta\in \FF_n^+} a_{\alpha
\beta}B_\alpha B_\beta^*$  in the operator system $ \cS_{f,\cP}$, we define
\begin{equation*}
\varphi_D(A_1,\ldots, A_n):=
\sum_{\alpha,\beta\in \FF_n^+} a_{\alpha \beta}A_\alpha DA_\beta^*.
\end{equation*}
The definition is correct since, according to  relation
\eqref{KSK}, we have the following von Neumman type inequality
\begin{equation}\label{von2}
\|\varphi_D(A_1,\ldots, A_n)\|\leq \|D\|\|\varphi(B_1,\ldots, B_n)\|.
\end{equation}
Now, fix $\chi\in   \cS_{f,\cP}$ and let
 $\varphi^{(k)}(B_1,\ldots, B_n)$ be a sequence of polynomials in
 $ \cS_{f,\cP}$ convergent to $\chi$, in the operator norm topology.
 Define the operator
 \begin{equation}\label{fd}
\chi_D(A_1,\ldots, A_n):= \lim_{k\to\infty}\varphi^{(k)}_D(A_1,\ldots, A_n).
\end{equation}
 Taking into account relation \eqref{von2}, it is clear that the
  operator $\chi_D(A_1,\ldots, A_n)$ is well-defined and
 \begin{equation*}
\|\chi_D(A_1,\ldots, A_n)\|\leq \|D\|\|\chi\|.
\end{equation*}
According to relation  \eqref{KSK}, we have
$$
\|\varphi^{(k)}_D(rA_1,\ldots, rA_n)\|\leq \|D\|\|\varphi^{(k)}(B_1,\ldots, B_n)\|,
$$
for any $r\in (\delta, 1)$.
  Taking into account that ${\bf B}_{q_r}$ is a bounded linear operator and using again
   relation \eqref{KSK}, we deduce that
 \begin{equation}\label{fdr}\begin{split}
  \lim_{k\to\infty} \varphi^{(k)}_D(rA_1,\ldots, rA_n)
 =\lim_{k\to\infty}{K}_{q_r}^*(\varphi^{(k)}(B_1,\ldots, B_n)\otimes I)K_{q_r}
 ={\bf B}_{q_r}[\chi],
 \end{split}
 \end{equation}
  for any $r\in (\delta, 1)$.
 Using  relations \eqref{fd}, \eqref{fdr}, the fact that
  $\|\chi-\varphi^{(k)}(B_1,\ldots, B_n)\|\to 0$ as $k\to \infty$, and
 $$
 \lim_{r\to 1}\varphi^{(k)}_D(rA_1,\ldots, rA_n)= \varphi^{(k)}_D(A_1,\ldots, A_n),
 $$
  we can deduce  that
 $$
 \lim_{r \to 1} {\bf B}_{q_r}[\chi]=\chi_D(A_1,\ldots, A_n)
 $$
 in the norm topology. Indeed, note that
 \begin{equation*}
 \begin{split}
&\|\chi_D(A_1,\ldots, A_n)-{\bf B}_{q_r}[\chi]\|\\
&\leq \|\chi_D(A_1,\ldots, A_n)-\varphi^{(k)}_D(A_1,\ldots,
A_n)\|+\|\varphi^{(k)}_D(A_1,\ldots, A_n)- {\bf B}_{q_r}(\varphi^{(k)})\|\\
&\quad + \|{\bf B}_{q_r}(\varphi^{(k)})-{\bf B}_{q_r}(\chi)\|\\
&\leq \|\chi-\varphi^{(k)}(B_1,\ldots, B_n)\|\|D\|+\|\varphi^{(k)}_D(A_1,\ldots,
A_n)-\varphi^{(k)}_D(rA_1,\ldots, rA_n)\|\\
&\quad +\|\chi-\varphi^{(k)}(B_1,\ldots, B_n)\|\|D\|.
 \end{split}
 \end{equation*}
 For any $r\in (\delta, 1)$,  ${\bf B}_{q_r}$ is a completely positive linear  map.
  Hence, and using
  relation \eqref{KSK},
    we infer  that
 $$
  \overline{{\bf B}}_{q}[\chi]:=\lim_{r \to 1}
 K_{q_r }^* (\chi\otimes I) K_{q_r}, \qquad   \chi\in
   \cS_{f,\cP},
$$
 is a completely positive map with $  \overline{{\bf B}}_q (I)=D$ and
 $\overline{{\bf B}}_q (B_\alpha B_\beta^*)=A_\alpha \overline{{\bf B}}_q(I)A_\beta$,
$\alpha, \beta\in \FF_n^+$.
 The proof is
complete.
\end{proof}
The following result is an extension of the noncommutative von
Neumann inequality (see \cite{von}, \cite{Po-von},
\cite{Po-poisson}, \cite{Po-Berezin}).

\begin{corollary}\label{VN} Under  the hypotheses of Theorem \ref{poisson}, if
$D\in C_{rad}(f,A)^+$, then
  we have the following von
Neumann type inequality:
\begin{equation*}
  \left\|\sum_{\alpha,\beta\in \Lambda} A_\alpha D
A_\beta^*\otimes C_{\alpha,\beta}\right\| \leq \|D\|
\left\|\sum_{\alpha,\beta\in \Lambda}
 B_\alpha B_\beta^*\otimes C_{\alpha,\beta}\right\|
 \end{equation*}
 for any
  finite  set $\Lambda\subset \FF_n^+$ and $C_{\alpha,\beta}\in
  B(\cE)$, where $\cE$ is a Hilbert space. If, in addition, $D$ is
  an invertible operator, then  the map   $u:
  \cA_n(\cV^m_{f,\cP})\to B(\cH)$ defined by
  $$
  u(p(B_1,\ldots, B_n)):=p(A_1,\ldots, A_n)
  $$
  is completely bounded with $\|u\|_{cb}\leq \|D^{-1/2}\|\|D^{1/2}\|$.
\end{corollary}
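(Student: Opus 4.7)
The plan is to deduce both claims from the noncommutative Berezin transform $\overline{\mathbf{B}}_q$ produced by Theorem \ref{poisson}. By that theorem, since $D\in C_{rad}(f,A)^+$, there is a completely positive linear map $\overline{\mathbf{B}}_q:\cS_{f,\cP}\to B(\cH)$ such that $\overline{\mathbf{B}}_q(I)=D$ and
$$\overline{\mathbf{B}}_q(B_\alpha B_\beta^*)=A_\alpha D A_\beta^*,\qquad \alpha,\beta\in\FF_n^+.$$
In particular, setting $\beta=g_0$, we have $\overline{\mathbf{B}}_q(B_\alpha)=A_\alpha D$, and hence $\overline{\mathbf{B}}_q(p(B_1,\ldots,B_n))=p(A_1,\ldots,A_n)D$ for every noncommutative polynomial $p$.

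For the von Neumann type inequality, I would apply $\overline{\mathbf{B}}_q\otimes \mathrm{id}_{B(\cE)}$ to the element $Y:=\sum_{\alpha,\beta\in\Lambda}B_\alpha B_\beta^*\otimes C_{\alpha,\beta}$. Using the identity for $\overline{\mathbf{B}}_q$ on $B_\alpha B_\beta^*$, the image is exactly $\sum_{\alpha,\beta\in\Lambda}A_\alpha D A_\beta^*\otimes C_{\alpha,\beta}$. Since $\overline{\mathbf{B}}_q$ is completely positive with $\overline{\mathbf{B}}_q(I)=D$, its completely bounded norm equals $\|\overline{\mathbf{B}}_q(I)\|=\|D\|$, and the inequality follows directly.

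For the completely bounded statement, I would exploit the invertibility of $D$ by twisting $\overline{\mathbf{B}}_q$ into a unital completely positive map. Define
$$\psi:\cS_{f,\cP}\to B(\cH),\qquad \psi(\chi):=D^{-1/2}\overline{\mathbf{B}}_q(\chi)D^{-1/2}.$$
Sandwiching a completely positive map by a bounded operator preserves complete positivity, and $\psi(I)=D^{-1/2}DD^{-1/2}=I$, so $\psi$ is a unital completely positive map; by Arveson--Stinespring, $\|\psi\|_{cb}=1$. Restricting to the subalgebra $\cA_n(\cV^m_{f,\cP})\subset\cS_{f,\cP}$ and applying $\psi$ to polynomials we obtain
$$\psi(p(B_1,\ldots,B_n))=D^{-1/2}p(A_1,\ldots,A_n)D\,D^{-1/2}=D^{-1/2}p(A_1,\ldots,A_n)D^{1/2}.$$
Hence $u(p(B_1,\ldots,B_n))=p(A_1,\ldots,A_n)=D^{1/2}\psi(p(B_1,\ldots,B_n))D^{-1/2}$.

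To finish, I would amplify to matrices: for any $P\in M_k(\cA_n(\cV^m_{f,\cP}))$,
$$(\mathrm{id}_k\otimes u)(P)=(I_k\otimes D^{1/2})(\mathrm{id}_k\otimes\psi)(P)(I_k\otimes D^{-1/2}),$$
so
$$\|(\mathrm{id}_k\otimes u)(P)\|\leq\|D^{1/2}\|\,\|D^{-1/2}\|\,\|(\mathrm{id}_k\otimes\psi)(P)\|\leq\|D^{-1/2}\|\,\|D^{1/2}\|\,\|P\|,$$
giving $\|u\|_{cb}\leq\|D^{-1/2}\|\,\|D^{1/2}\|$. I do not foresee a real obstacle: the only subtlety is checking that $\psi$ remains unital completely positive (which follows from the standard sandwich construction) and that the restriction of its cb norm to the subalgebra is bounded by the cb norm on the operator system, both of which are routine.
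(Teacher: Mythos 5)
Your proof is correct, but it routes the argument through the abstract output of Theorem \ref{poisson} rather than through the kernels themselves, which is what the paper does. You take the completely positive map $\overline{\bf B}_q$ with $\overline{\bf B}_q(I)=D$ and $\overline{\bf B}_q(B_\alpha B_\beta^*)=A_\alpha DA_\beta^*$, invoke the standard facts that $\|\varphi\|_{cb}=\|\varphi(I)\|$ for completely positive maps (giving the von Neumann inequality after tensoring with $\mathrm{id}_{B(\cE)}$, which is legitimate since the element lies in the algebraic tensor product and cb maps are bounded on the spatial tensor norm) and that sandwiching, $\psi(\cdot):=D^{-1/2}\overline{\bf B}_q(\cdot)D^{-1/2}$, produces a unital completely positive map, so that $u=D^{1/2}\psi(\cdot)D^{-1/2}$ on polynomials and $\|u\|_{cb}\leq\|D^{-1/2}\|\,\|D^{1/2}\|$. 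The paper instead stays at the level of the Berezin kernels: it uses relation \eqref{KSK}, namely $(K_{q_r}^*\otimes I_\cE)(B_\alpha B_\beta^*\otimes I\otimes C_{\alpha,\beta})(K_{q_r}\otimes I_\cE)=r^{|\alpha|+|\beta|}A_\alpha DA_\beta^*\otimes C_{\alpha,\beta}$ together with $K_{q_r}^*K_{q_r}=D$, and lets $r\to1$ to get the inequality; for the cb bound it avoids any general operator-space machinery and simply estimates $\|p(A_1,\ldots,A_n)\|^2\leq\|D^{-1/2}\|^2\|p(A)Dp(A)^*\|\leq\|D^{-1/2}\|^2\|D\|\,\|p(B)\|^2$, then passes to matrices. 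Your version buys brevity and conceptual clarity (the similarity-via-UCP-map trick makes the constant transparent), at the cost of leaning on Theorem \ref{poisson} and standard cb-norm facts; the paper's version is more elementary and self-contained, exhibiting the bound directly from the kernel identity without appealing to $\|\varphi\|_{cb}=\|\varphi(I)\|$ or to Stinespring-type results. Both are complete; note also that in either approach the well-definedness of $u$ on $\cA_n(\cV^m_{f,\cP})$ follows from the norm estimate itself, which your argument provides just as the paper's does.
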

\begin{proof} Due to relation \eqref{KSK}, we have
\begin{equation*}
(K_{q_r}^* \otimes I_\cE)(B_\alpha B_\beta^*\otimes I\otimes
C_{\alpha,\beta}) (K_{q_r}\otimes I_\cE)= r^{|\alpha|+ |\beta|}
A_\alpha DA_\beta^*\otimes C_{\alpha,\beta},\qquad \alpha,\beta\in
\FF_n^+, \ r\in (\delta, 1).
\end{equation*}
Since $K_{q_r}^* K_{q_r}= D$ for $ r\in (\delta, 1)$, one can easily
deduce the von Neumann type inequality. To prove the second part,
note that, if $D$ is invertible, then the first part of this
corollary implies
\begin{equation*}
\begin{split}
\|p(A_1,\ldots, A_n)\|^2&\leq \|D^{-1/2}\|^2\|p(A_1,\ldots,
A_n)D^{1/2}\|^2\\
&= \|D^{-1/2}\|^2 \|p(A_1,\ldots, A_n)Dp(A_1,\ldots, A_n)^*\|\\
&\leq \|D^{-1/2}\|^2 \|D\| \|p(B_1,\ldots, B_n)p(B_1,\ldots, B_n)^*\|\\
&=\|D^{-1/2}\|^2\|D^{1/2}\|^2 \|p(B_1,\ldots, B_n)\|^2
\end{split}
\end{equation*}
for any noncommutative polynomial $p$. A similar result holds if we
pass to matrices. Therefore, we  deduce that $u$ is completely
bounded with $\|u\|_{cb}\leq \|D^{-1/2}\|\|D^{1/2}\|$. The proof is
complete.
\end{proof}

 \begin{example}
\begin{enumerate}
\item[(i)] When $m=1$,
$f=X_1+\cdots + X_n$, and $D=I$, we obtain the noncommutative Poisson
transform introduced in \cite{Po-poisson} (case  $\cP=\{0\}$) and \cite{Po-varieties}
(case $\cP\neq \{0\}$).
\item[(ii)] When $m=1$,
$f=X_1+\cdots + X_n$, $\cP=\{0\}$,  and $D\geq 0$ such that
$\sum_{i=1}^n A_i DA_i^*\leq D$, we obtain the noncommutative
Poisson transform from \cite{Po-similarity}.
\item[(iii)] When $m \geq 1$, $D=I$, and  $f$ is an arbitrary positive regular free holomorphic
function,   we obtain the noncommutative  Berezin transforms
associated with noncommutative domains ${\bf D}_f^m$ or
noncommutative varieties $\cV^m_{f,\cP}$, which were studied in \cite{Po-Berezin} and
\cite{Po-domains}.
\end{enumerate}
\end{example}

\bigskip

\section{ Generalized noncommutative Berezin transforms and the cone $C_{pure}(f,A)^+$}

In this section, we  study the noncommutative cone $C_{pure}(f,A)^+$
of all  pure solutions of the operator inequalities $
(id-\Phi_{f,A})^s(X)\geq 0$, $ s=1,\ldots,m.$ When $A$ is  a pure
$n$-tuple of operators in  the noncommutative variety
$\cV^1_{f,\cP}(\cH)$, we obtain a complete description  of the
noncommutative cone $C(f,A)^+$.

Let $A:=(A_1,\ldots, A_n)\in B(\cH)^n$ be such that
$\sum_{|\alpha|\geq 1} a_\alpha A_\alpha A_\alpha^* $ is
convergent in the weak  operator topology and recall that
$$
 \Phi_{f,A}(X):=\sum\limits_{|\alpha|\geq 1} a_\alpha A_\alpha
XA_\alpha^*,\qquad X\in B(\cH),
$$
where the convergence is in the weak operator topology. We assume
that $\Phi_{f,A}$ is power bounded. A self-adjoint operator $C\in
B(\cH)$ is called pure solution of the inequality
$(id-\Phi_{f,A})^m(X)\geq 0$ if
$$(id-\Phi_{f,A})^m(C)\geq 0\quad  \text{\rm and } \quad
\text{SOT-}\lim_{k\to \infty}  \Phi_{f,A}^k (C)=0.
$$
 Note that since $\Phi_{f,A}$ is power bounded,  Lemma \ref{ineq-lim} implies
  $\Phi_{f,A}(C)\leq C$.   This can be used to show that a pure  self-adjoint solution is always
a positive operator. In what follows we present a {\it canonical
decomposition}
 for the self-adjoint solutions of the operator inequality $(id-\Phi_{f,A})^m(X)\geq 0$.

\begin{theorem}\label{decomp}  Let
  $f:=\sum_{|\alpha|\geq 1} a_\alpha X_\alpha$
   be
a positive regular free holomorphic function  and $m\geq 1$. Let
$A:=(A_1,\ldots, A_n)\in B(\cH)^n$ be such that
$\sum\limits_{|\alpha|\geq 1} a_\alpha A_\alpha A_\alpha^* $ is
convergent in the weak operator topology and  $\Phi_{f,A}$ is power
bounded. If  $Y=Y^*\in B(\cH)$ is   such that
    $(id-\Phi_{f,A})^m(Y)\geq 0$,
  then there exist operators $B,C\in B(\cH)$ with the following properties:
  \begin{enumerate}
  \item[(i)] $Y=B+C$;
  \item[(ii)] $B=B^*$  and $  \Phi_{f,A}(B)=B$;
  \item[(iii)] $C\geq 0$,   $(id-\Phi_{f,A})^m(C)\geq 0$, and \  $\text{\rm
SOT-}\lim_{k\to \infty}  \Phi_{f,A}^k (C)=0$.

  \end{enumerate}
   Moreover, the decomposition $Y=B+C$ is unique with the above-mentioned properties and
   $$
B= \text{\rm SOT-}\lim\limits_{k\to \infty}\Phi_{f,A}^k(Y)=\text{\rm
SOT-}\lim_{k\to\infty} \frac{1}{k}\sum_{j=0}^{k-1} \Phi_{f,A}^j(Y).
 $$
\end{theorem}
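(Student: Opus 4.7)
The plan is to construct the decomposition by first identifying the fixed-point part $B$ as an ergodic limit of $\Phi_{f,A}^k(Y)$, and then defining $C:=Y-B$ and verifying its properties. The key observation is that the hypotheses of power-boundedness of $\Phi_{f,A}$ together with $(id-\Phi_{f,A})^m(Y)\ge 0$ are precisely what Lemma~\ref{ineq-lim}(ii)(b) asks for when applied to the self-adjoint operator $Y$; the extension of Lemma~\ref{ineq-lim} to self-adjoint (rather than merely positive) inputs is what allows us to initiate the argument without assuming $Y\ge 0$ from the outset.

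First, I apply Lemma~\ref{ineq-lim}(ii)(b) with $\varphi=\Phi_{f,A}$, $D=Y$, and $d=0$ to obtain existence of $\lim_{k\to\infty}\langle\Phi_{f,A}^k(Y)h,h\rangle$ for every $h\in\cH$. Polarization, together with the uniform bound $\|\Phi_{f,A}^k(Y)\|\le M\|Y\|$ coming from power boundedness, produces a self-adjoint operator $B\in B(\cH)$ with $\Phi_{f,A}^k(Y)\to B$ in the WOT. Since $\Phi_{f,A}$ is WOT-continuous on bounded sets by Lemma~\ref{wot-cp}, one has $\Phi_{f,A}(B)=\text{\rm WOT-}\lim_k\Phi_{f,A}^{k+1}(Y)=B$, establishing (ii).

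Next, I set $C:=Y-B$. Since $(id-\Phi_{f,A})(B)=0$, we have $(id-\Phi_{f,A})^m(C)=(id-\Phi_{f,A})^m(Y)\ge 0$, and Lemma~\ref{ineq-lim}(i) then yields $(id-\Phi_{f,A})^s(C)\ge 0$ for $s=1,\ldots,m$. In particular $C\ge\Phi_{f,A}(C)$, and applying the positive map $\Phi_{f,A}^k$ gives $\Phi_{f,A}^k(C)\ge\Phi_{f,A}^{k+1}(C)$, so $\{\Phi_{f,A}^k(C)\}$ is decreasing. From the previous step, $\Phi_{f,A}^k(C)=\Phi_{f,A}^k(Y)-B\to 0$ in the WOT; for each $h$, the real sequence $\langle\Phi_{f,A}^k(C)h,h\rangle$ is then decreasing and tends to $0$, forcing every term to be non-negative, whence $\Phi_{f,A}^k(C)\ge 0$ for all $k$ and in particular $C\ge 0$. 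A bounded decreasing sequence of positive operators converges in the SOT to its infimum, which must coincide with the WOT limit $0$; hence $\Phi_{f,A}^k(C)\to 0$ strongly, establishing (i) and (iii), and $\Phi_{f,A}^k(Y)=B+\Phi_{f,A}^k(C)\to B$ in the SOT.

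Finally, for uniqueness, any decomposition $Y=B'+C'$ satisfying (i)--(iii) gives $\Phi_{f,A}^k(Y)=B'+\Phi_{f,A}^k(C')\to B'$ strongly, so $B=B'$ and hence $C=C'$. For the Cesàro formula I write $\frac{1}{k}\sum_{j=0}^{k-1}\Phi_{f,A}^j(Y)=B+\frac{1}{k}\sum_{j=0}^{k-1}\Phi_{f,A}^j(C)$ and use $\|\Phi_{f,A}^j(C)h\|\to 0$ for every $h$ together with the standard Cesàro-mean lemma in $\cH$. The main obstacle in the argument is the third paragraph above: upgrading the initial WOT convergence (all that Lemma~\ref{ineq-lim} directly provides) to SOT convergence while simultaneously extracting positivity of $C$. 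This is resolved by exploiting the monotonicity of $\{\Phi_{f,A}^k(C)\}$, since the two properties of having WOT-limit zero and being monotonically decreasing together force both positivity and SOT convergence; everything else reduces to formal manipulation around the fixed-point identity $\Phi_{f,A}(B)=B$.
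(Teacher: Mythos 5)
Your proof is correct and follows essentially the same route as the paper: both identify $B$ as the limit of $\Phi_{f,A}^k(Y)$, set $C:=Y-B$, and obtain the properties of $C$, the uniqueness, and the Ces\`aro formula from the monotonicity supplied by Lemma \ref{ineq-lim} together with power boundedness. The only (harmless) difference is that the paper applies Lemma \ref{ineq-lim}(i) directly to $Y$ to get $\Phi_{f,A}(Y)\leq Y$, so that $\{\Phi_{f,A}^k(Y)\}$ is a bounded decreasing sequence of self-adjoint operators converging strongly at once, whereas you first produce $B$ as a WOT limit via Lemma \ref{ineq-lim}(ii)(b) and polarization and then recover positivity of $C$ and SOT convergence from the monotonicity of $\{\Phi_{f,A}^k(C)\}$.
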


\begin{proof} Let $Y=Y^*\in B(\cH)$ be    such that
    $(id-\Phi_{f,A})^m(Y)\geq 0$. Since $\Phi_{f,A}$ is power
    bounded, Lemma \ref{ineq-lim} implies $\Phi_{f,A}(Y)\leq Y$.
Consequently, the sequence  of self-adjoint operators
$\{\Phi_{f,A}^k(Y)\}_{k=0}^\infty$
 is bounded and decreasing. Thus it converges strongly to  a selfadjoint
  operator $B:= \text{\rm SOT-}\lim\limits_{k\to \infty}\Phi_{f,A}^k(Y)$.
  Since $\Phi_{f,A}$ is a  $WOT$-continuous map,
  we  have $\Phi_{f,A}(B)=B$. Note that  $C:=Y-B\geq 0$ satisfies
  the inequality
  $
  \Phi_{f,A}(C) \leq C$, and $(id-\Phi_{f,A})^m(C)=(id-\Phi_{f,A})^m(Y)\geq
  0.$ Moreover, $\Phi_{f,A}^k(C)\to 0$ strongly, as $k\to \infty$.

 To prove the uniqueness of the decomposition,  suppose $Y=B_1+C_1$,
 where $B_1$ and $C_1$  have the same properties as $B$ and $C$,
 respectively.
 Then
 $$
 B-B_1= \Phi_{f,A}^k(B-B_1)=\Phi_{f,A}^k(C_1-C),  \qquad k\in \NN.
 $$
 Taking  $k\to \infty$, we get   $B=B_1$ and, consequently, $C=C_1$.
Since   $0\leq\Phi_{f,A}^k(C)\leq C$, $k\in \NN$,  and  $\text{\rm
SOT}-\lim\limits_{k\to\infty}
 \Phi_{f,A}^k(C)=0$,
 a standard argument shows that
 $
\text{\rm SOT-}\lim_{k\to\infty} \frac{1}{k}\sum_{j=0}^{k-1}
\Phi_{f,A}^j(C) =0.
 $
 On the other hand,
 since $Y=B+C$ and $\Phi_{f,A}(B)=B$, we infer that
 $$
 \frac{1}{k}\sum_{j=0}^{k-1}
\Phi_{f,A}^j(Y)=B+\frac{1}{k}\sum_{j=0}^{k-1} \Phi_{f,A}^j(C).
 $$
 Hence, the result follows. The proof is complete.
\end{proof}

We denote by $C_{pure}(f,A)^+$   the set of all  operators $D\in
B(\cH)$ such that
$$(id-\Phi_{f,A})^s(D)\geq 0, \qquad s=1,\ldots,m,
$$
 and $\Phi_{f,A}^k(D)\to 0$ strongly, as $k\to\infty$. Note that
such an operator $D$ is always positive.

\begin{theorem}\label{poisson2} Let  $f:=\sum_{|\alpha|\geq 1} a_\alpha X_\alpha$
 be   a positive regular  free holomorphic function and $m\geq 1$.
Let $\cP$ be a family of noncommutative
 polynomials  with $\cN_\cP\neq \{0\}$ and let  $B:=(B_1,\ldots, B_n)$ be
the universal
  model associated with the noncommutative variety
 $\cV^m_{f,\cP}$. If $A:=(A_1,\ldots,
A_n)\in B(\cH)^n$ is such that $\sum_{|\alpha|\geq 1} a_\alpha
A_\alpha A_\alpha^* $ \ is SOT-convergent and
$p(A_1,\ldots, A_n)=0$, \ $ p\in \cP$,
then there is a bijection
$$\Gamma: CP^{w^*}(A,\cV^m_{f,\cP})\to C_{pure}(f,A)^+, \qquad
 \Gamma(\varphi):= \varphi (1),
 $$
  where $CP^{w^*}(A,\cV^m_{f,\cP})$ is   the set
  of all $w^*$-continuous  completely positive linear maps \ $\varphi : \cS_{f,\cP}^{w^*}
 \to B(\cH)$ such that
$$ \varphi(B_\alpha B_\beta^*) =A_\alpha \varphi(I) A_\beta^*,
 \qquad \alpha, \beta \in \FF_n^+,
 $$
 where $\cS_{f,\cP}^{w^*}:=
\overline{\text{\rm span}}^{w^*} \{ B_\alpha B_\beta^*:\
\alpha,\beta\in \FF_n^+\}$.  In addition,  if $D\in
C_{pure}(f,A)^+$, then $\Gamma^{-1}(D)$ coincides
 with the noncommutative
Berezin  transform  associated with $q:=(f, m, A,R,\cP)$ and defined
by
$$
  {\bf B}_{q}[\chi]:=
 K_{q }^* (\chi\otimes I) K_{q} ,\qquad \chi\in
   \cS_{f,\cP}^{w^*},
$$
where   $R:=(id-\Phi_{f,A})^m(D)$.

Moreover, an operator $D\in B(\cH)$ is in $ C_{pure}(f,A)^+$
 if and only if there is a Hilbert space $\cD$ and an operator
 $K:\cH\to \cN_\cP\otimes \cD$ such that
 \begin{equation*}
 D=K^*K\quad \text{ and } \quad KA_i^*= (B_i^*\otimes I_\cD)K, \qquad  i=1,\ldots, n.
 \end{equation*}
\end{theorem}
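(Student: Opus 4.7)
The structure parallels Theorem \ref{poisson}, but with the pure/$w^*$-continuous setting replacing the radial/norm-continuous one. First I would check that $\Gamma$ is well-defined: given $\varphi\in CP^{w^*}(A,\cV^m_{f,\cP})$ with $D:=\varphi(I)$, the operators $(id-\Phi_{f,B})^s(I)$ lie in $\cS_{f,\cP}^{w^*}$ (by the same argument used in Theorem \ref{poisson}, but passing to $w^*$-limits since $\Phi_{f,B}(I)\leq I$ and the defining series converges $w^*$), and applying $\varphi$ gives $(id-\Phi_{f,A})^s(D)=\varphi((id-\Phi_{f,B})^s(I))\geq 0$. For the pure condition, note that $\Phi_{f,B}^k(I)\to 0$ in SOT (since $B_i^*=W_i^*|_{\cN_\cP}$ and $\Phi_{f,W}^k(I)\to 0$ in SOT by Theorem 1.3 of \cite{Po-Berezin}), hence $w^*$-converges to $0$; by $w^*$-continuity of $\varphi$, $\Phi_{f,A}^k(D)=\varphi(\Phi_{f,B}^k(I))\to 0$ weakly, and since it is a decreasing sequence of positive operators (from $\Phi_{f,A}(D)\leq D$), weak convergence to $0$ forces strong convergence to $0$. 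Injectivity of $\Gamma$ follows exactly as in Theorem \ref{poisson}: $\varphi(I)=D$ determines $\varphi(B_\alpha B_\beta^*)=A_\alpha DA_\beta^*$, and $w^*$-continuity extends this to all of $\cS_{f,\cP}^{w^*}$.

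For surjectivity, given $D\in C_{pure}(f,A)^+$, I would set $R:=(id-\Phi_{f,A})^m(D)\geq 0$ and verify that $q:=(f,m,A,R,\cP)$ is a compatible tuple. Using the binomial identity $\binom{i+j}{j}-\binom{i+j-1}{j}=\binom{i+j-1}{j-1}$ together with the pure hypothesis and Lemma \ref{ineq-lim}, the telescoping computation from Theorem \ref{poisson} yields
\begin{equation*}
\sum_{k=0}^\infty\binom{k+m-1}{m-1}\Phi_{f,A}^k(R)=D-\text{SOT-}\lim_{k\to\infty}\Phi_{f,A}^k(D)=D,
\end{equation*}
so the compatibility bound holds with $b=\|D\|$. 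Then Lemma \ref{lemma2} yields the Berezin kernel $K_q$ with $K_q^*K_q=D$ and $K_qA_i^*=(B_i^*\otimes I_\cR)K_q$. Defining $\varphi(\chi):=K_q^*(\chi\otimes I_\cR)K_q$ gives a completely positive map with $\varphi(I)=D$ and, by the intertwining relation, $\varphi(B_\alpha B_\beta^*)=A_\alpha D A_\beta^*$; $w^*$-continuity is automatic since $\chi\mapsto K_q^*(\chi\otimes I)K_q$ is $w^*$-continuous.

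For the final characterization, the forward direction is furnished by $K:=K_q$ just constructed, with $\cD:=\cR$. For the converse, suppose $K:\cH\to\cN_\cP\otimes\cD$ satisfies $D=K^*K$ and $KA_i^*=(B_i^*\otimes I_\cD)K$. Then iterating the intertwining relation yields $A_\alpha D A_\beta^*=K^*(B_\alpha B_\beta^*\otimes I_\cD)K$, so
\begin{equation*}
(id-\Phi_{f,A})^s(D)=K^*\bigl((id-\Phi_{f,B})^s(I)\otimes I_\cD\bigr)K\geq 0
\end{equation*}
for $s=1,\ldots,m$, and $\Phi_{f,A}^k(D)=K^*(\Phi_{f,B}^k(I)\otimes I_\cD)K\to 0$ strongly since $\Phi_{f,B}^k(I)\to 0$ strongly on $\cN_\cP$.

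The main obstacle I anticipate is twofold: justifying that $(id-\Phi_{f,B})^s(I)$ and similar expressions genuinely belong to the $w^*$-closed operator system $\cS_{f,\cP}^{w^*}$ (since now the defining series only converges in SOT, not in norm as in the radial case), and carefully justifying the passage from weak to strong convergence of $\Phi_{f,A}^k(D)$ when deducing the pure condition from $w^*$-continuity. Both are standard but require care that the monotone/positivity structure is preserved at each step.
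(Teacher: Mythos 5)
Your proposal follows essentially the same route as the paper's proof: well-definedness and purity of $\Gamma(\varphi)$ via applying the $w^*$-continuous map $\varphi$ to $(id-\Phi_{f,B})^s(I)$ and $\Phi_{f,B}^k(I)$ together with the monotone weak-to-strong argument, injectivity from $w^*$-continuity, surjectivity via the constrained Berezin kernel $K_q$ with $R=(id-\Phi_{f,A})^m(D)$ and the telescoping identity $K_q^*K_q=D$, and the final characterization with $K=K_q$ and the easy converse. The two technical points you flag (membership of the SOT-limits in $\cS_{f,\cP}^{w^*}$ and the passage from weak to strong convergence of the decreasing sequence $\Phi_{f,A}^k(D)$) are handled exactly as you indicate, so the argument is correct and matches the paper.
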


\begin{proof} Assume that
$\varphi : \cS_{f,\cP}^{w^*}
 \to B(\cH)$ is a $w^*$-continuous completely positive linear
map such that
$$ \varphi(B_\alpha B_\beta^*) =A_\alpha \varphi(I) A_\beta^*,
 \qquad \alpha, \beta \in \FF_n^+.
 $$
Then, setting $D:=\varphi(I)$ and taking into account that
$\Phi_{f,B}=\sum_{|\alpha|\geq 1} a_\alpha B_\alpha B_\alpha^*$ is
SOT convergent, we deduce that
$$
(id-\Phi_{f,A})^s(D)=\varphi \left( (id-\Phi_{f,B})^s(I)\right)\geq
0, \qquad  s=1,\ldots,m.
$$
On the other hand, recall that $\{\Phi_{f,B}^k(I)\}_{k=1}^\infty$ is
a
 bounded decreasing sequence of positive operators  which converges
 strongly to $0$, as $k\to \infty$. Since
 $\Phi_{f,A}^k(D)=\varphi(\Phi_{f,B}^k(I))$ for all  $k\in \NN$,
 one can easily see that
  $\{\Phi_{f,A}^k(D)\}_{k=1}^\infty$ is
a
 bounded decreasing sequence of positive operators  which converges
 strongly, as $k\to \infty$.
Taking into account that  $\varphi$ is
  continuous  in the  $w^*$-topology, which coincides with the weak
  operator topology   on
bounded sets, we deduce that $\Phi_{f,A}^k(D)\to 0$ strongly, as
$k\to \infty$. Therefore, $D\in C_{pure}(f,A)^+$.
 To
prove that $\Gamma$ is one-to-one, let $\varphi_1$ and $\varphi_2$
be $w^*$-continuous completely positive linear maps on
$\cS_{f,\cP}^{w^*}$ such that $\varphi_j(B_\alpha
B_\beta^*)=A_\alpha \varphi_j(I)A_\beta$, $\alpha, \beta\in
\FF_n^+$, and assume that $\Gamma(\varphi_1)=\Gamma(\varphi_2)$,
i.e., $\varphi_1(I)=\varphi_2(I)$. Then we have $\varphi_1(B_\alpha
B_\beta^*)=\varphi_2(B_\alpha B_\beta^*)$ for $\alpha, \beta\in
\FF_n^+$. Since     $\varphi_1$ and $\varphi_2$ are
$w^*$-continuous, we deduce that $\varphi_1=\varphi_2$.

We prove now that $\Gamma$ is a surjective map. Let  $D\in
C_{pure}(f,A)^+$ be fixed. According to Lemma \ref{lemma2}, the
constrained noncommutative Berezin kernel \ $K_{q}$  associated with
the compatible tuple $q:=(f,m,A,R,\cP)$, has the property that
\begin{equation}\label{KABK}
  K_{q}  A_i^*=(B_i^*\otimes
I_\cH)K_{q},\quad i=1,\ldots, n,
\end{equation}
where $(B_1,\ldots, B_n)$ is the  universal model associated with
the noncommutative variety $\cV^m_{f,\cP}$, and
\begin{equation*}
  K_{q}^* K_{q}= \sum_{k=0}^\infty \left(\begin{matrix} k+m-1\\ m-1
\end{matrix}\right)
\Phi_{f,A}^k(R),
\end{equation*}
where $R:=(id-\Phi_{f,A})^m(D)$.  As in the proof of Theorem
\ref{poisson}, we can use   Lemma \ref{ineq-lim}  and the fact that
$\text{\rm WOT-}\lim_{k\to\infty}  \Phi_{f,A}^k(D)=0$,  to obtain
\begin{equation*} K_q^* K_q=
\sum_{k=0}^\infty \left(\begin{matrix} k+m-1\\ m-1
\end{matrix}\right) \Phi_{f,A}^k(R )
= D-\text{\rm WOT-}\lim_{k\to \infty}  \Phi_{f,A}^k(D)=D
\end{equation*}
Define the operator ${\bf B}_{q} : \cS^{w^*}_{f,\cP}  \to B(\cH)$
 by setting
 \begin{equation*}
 {\bf B}_{q}(\chi):= K_{q}^* (\chi\otimes I_\cH) K_{q},
 \qquad \chi\in \cS^{w^*}_{f,\cP}.
 \end{equation*}
   Now,  due to  relation \eqref{KABK} it is easy to see that
\begin{equation*}
{\bf B}_{q}( B_\alpha B_\beta^*)=K_{q}^*(B_\alpha B_\beta^*\otimes
I) K_{q}= A_\alpha DA_\beta^*,\qquad \alpha,\beta\in \FF_n^+.
\end{equation*}
 Consequently,  ${\bf B}_{q}\in CP^{w^*}(A,\cV^m_{f,\cP})$    has
 the required properties.


To prove the last part of the theorem, note that the direct
implication follows if we take $K$ to be the noncommutative Berezin
kernel $K_q$.
   To prove the converse,  assume that there is a Hilbert space $\cD$ and an operator
 $K:\cH\to \cN_\cP\otimes \cD$ such that
 \begin{equation*}
 D=K^*K\quad \text{ and } \quad KA_i^*= (B_i^*\otimes I_\cD)K, \qquad  i=1,\ldots, n.
 \end{equation*}
 Then
$$
(id-\Phi_{f,A})^s(D)=K^* \left[ (id-\Phi_{f,B})^s(I)\otimes I_\cD
\right]K\geq 0, \qquad  s=1,\ldots,m.
$$
  Since $\Phi_{f,A}^k(D)=K^*[\Phi_{f,B}^k(I)\otimes I_\cD]K$,
  $\|\Phi_{f,B}^k(I)\|\leq 1$,
and $\Phi_{f,B}^k(I)\to 0$ strongly, as $k\to 0$, we deduce that
$D\in  C_{pure}(f,A)^+$. The proof is complete.
\end{proof}

We remark that, in Theorem \ref{poisson2}, the set $\cP$  is of
arbitrary noncommutative polynomials with $\cN_\cP\neq \{0\}$,
while, in Theorem \ref{poisson}, $\cP$ consists of homogeneous
polynomials.

The proof of the next result is similar to that of Corollary
\ref{VN}, so we shall omit it.
\begin{corollary}\label{VN2} Under  the hypotheses of Theorem \ref{poisson2}, if
$D\in C_{pure}(f,A)^+$, then
  we have the following von
Neumann type inequality:
\begin{equation*}
  \left\|\sum_{\alpha,\beta\in \Lambda} A_\alpha D
A_\beta^*\otimes C_{\alpha,\beta}\right\| \leq \|D\|
\left\|\sum_{\alpha,\beta\in \Lambda}
 B_\alpha B_\beta^*\otimes C_{\alpha,\beta}\right\|
 \end{equation*}
 for any
  finite  set $\Lambda\subset \FF_n^+$ and $C_{\alpha,\beta}\in
  B(\cE)$, where $\cE$ is a Hilbert space.

   If, in addition, $D$ is
  an invertible operator, then  the polynomial calculus
  $
  p(B_1,\ldots, B_n)\mapsto p(A_1,\ldots, A_n)
  $
  extends to a completely bounded map
$u:
  F_n^\infty(\cV^m_{f,\cP})\to B(\cH)$ by setting
  $$
 u(\varphi) :=K_q^* [\varphi \otimes I_\cH]
  K_q D^{-1},\qquad  \varphi \in F_n^\infty(\cV^m_{f,\cP}),
$$
where  \ $K_{q}$  is the noncommutative Berezin kernel  associated
with the compatible tuple $q:=(f,m, A,R,\cP)$ and
$R:=(id-\Phi_{f,A})^m(D)$.  Moreover, $\|u\|_{cb}\leq
\|D^{-1/2}\|\|D^{1/2}\|$.
\end{corollary}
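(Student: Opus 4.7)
The plan is to imitate the proof of Corollary~\ref{VN}, replacing the radial family $\{K_{q_r}\}_{r<1}$ by the single pure Berezin kernel $K_q$ produced in Theorem~\ref{poisson2}. Set $R:=(id-\Phi_{f,A})^m(D)$ and $q:=(f,m,A,R,\cP)$. Theorem~\ref{poisson2} gives $K_q^{*}K_q=D$ and $K_q A_i^{*}=(B_i^{*}\otimes I_\cH)K_q$, which iterate (after taking adjoints) to $A_\alpha K_q^{*}=K_q^{*}(B_\alpha\otimes I_\cH)$ for every $\alpha\in\FF_n^{+}$. Tensoring with $I_\cE$ and composing the two one-step intertwinings yields the core identity
\[
(K_q^{*}\otimes I_\cE)(B_\alpha B_\beta^{*}\otimes I_\cH\otimes C_{\alpha,\beta})(K_q\otimes I_\cE)=A_\alpha D A_\beta^{*}\otimes C_{\alpha,\beta},\qquad \alpha,\beta\in\FF_n^{+}.
\]

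The von Neumann type inequality then follows immediately by summing over a finite $\Lambda$ and taking norms, using $\|K_q\|^{2}=\|K_q^{*}K_q\|=\|D\|$. For the second assertion assume $D$ is invertible. I would first check that $u$ extends the polynomial calculus: for a polynomial $p$, the relation $A_\alpha K_q^{*}=K_q^{*}(B_\alpha\otimes I_\cH)$ combined with $K_q^{*}K_q=D$ gives
\[
u(p(B_1,\ldots,B_n))=K_q^{*}(p(B)\otimes I_\cH)K_q\,D^{-1}=p(A)\,K_q^{*}K_q\,D^{-1}=p(A)\,D\,D^{-1}=p(A).
\]

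For the cb-bound I would transplant the matrix form of the argument in Corollary~\ref{VN}. Fix $k\geq 1$ and consider polynomial matrices $Y:=[p_{ij}(B)]$, $X:=[p_{ij}(A)]$; set $\hat D:=I_k\otimes D\in M_k(B(\cH))$. Then
\[
\|X\|\leq \|\hat D^{-1/2}\|\,\|X\hat D^{1/2}\|=\|D^{-1/2}\|\,\|X\hat D X^{*}\|^{1/2},
\]
and the $(i,j)$ entry of $X\hat D X^{*}$ is $\sum_l p_{il}(A)D p_{jl}(A)^{*}=\sum_{\alpha,\beta}c^{(ij)}_{\alpha,\beta}\,A_\alpha D A_\beta^{*}$ for scalars $c^{(ij)}_{\alpha,\beta}$ determined by the coefficients of $Y$. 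Grouping the matrices $C_{\alpha,\beta}:=[c^{(ij)}_{\alpha,\beta}]\in M_k(\CC)$ and applying the first part of the corollary produces $\|X\hat D X^{*}\|\leq \|D\|\,\|Y\|^2$, hence $\|X\|\leq \|D^{-1/2}\|\|D^{1/2}\|\|Y\|$. To pass from polynomials to $F_n^{\infty}(\cV^m_{f,\cP})$, I would observe that $\varphi\mapsto K_q^{*}(\varphi\otimes I_\cH)K_q D^{-1}$ is $w^{*}$-continuous in $\varphi$ and that polynomials in $B_1,\ldots,B_n$ are $w^{*}$-dense in $F_n^{\infty}(\cV^m_{f,\cP})$ by definition, so the matrix bound transfers to the full Hardy algebra by lower semicontinuity of the operator norm on $w^{*}$-bounded sets. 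The step I expect to require the most care is this final $w^{*}$-limit argument, ensuring that the cb-bound $\|D^{-1/2}\|\|D^{1/2}\|$ is preserved simultaneously at every matrix level~$k$.
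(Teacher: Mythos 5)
Your proposal is correct and is essentially the paper's own argument: the paper omits the proof, stating it is ``similar to that of Corollary \ref{VN}'', and you reproduce exactly that scheme with the single pure kernel $K_q$ from Theorem \ref{poisson2} (the identities $K_q^*K_q=D$ and $K_qA_i^*=(B_i^*\otimes I)K_q$ replacing the radial family $K_{q_r}$), including the same matrix-level trick $\|X\|\le\|D^{-1/2}\|\,\|X\hat D X^*\|^{1/2}$. The only step beyond Corollary \ref{VN} is the passage from polynomials to $F_n^\infty(\cV^m_{f,\cP})$, and your $w^*$-limit argument is fine provided you approximate $\varphi$ by polynomials with controlled norms (available in this setting via the Ces\`aro-type means of \cite{Po-domains}); alternatively, the bound follows directly for all $\varphi$ from $u(\varphi)(I\otimes D^{1/2})=K_q^*(\varphi\otimes I)K_qD^{-1/2}$ together with $\|K_qD^{-1/2}\|=1$ and $\|K_q\|=\|D^{1/2}\|$, with no density argument at all.
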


\begin{theorem}\label{pure}
Let  $f:=\sum_{ |\alpha|\geq 1} a_\alpha X_\alpha$
 be   a positive regular  free holomorphic function and $m\geq 1$.
Let $\cP$ be a family of noncommutative
 polynomials  and let  $A:=(A_1,\ldots,
A_n)\in B(\cH)^n$ be  such that $\sum_{|\alpha|\geq 1} a_\alpha
A_\alpha A_\alpha^* $ \ is SOT-convergent and $p(A_1,\ldots, A_n)=0$
for $p\in \cP.$
 Then   a positive operator  $G\in B(\cH)$  is in $C (f,A)^+$
if and only
  if there exists  an $n$-tuple $T:=(T_1,\ldots, T_n)\in \cV^m_{f,\cP}(\cH)$
  such that
 \begin{equation*}
 A_i G^{1/2}= G^{1/2} T_i, \qquad i=1,\ldots, n.
 \end{equation*}
In addition,  $G\in C_{pure}(f,A)^+$    if and only if  \ $I_\cH \in
C_{pure}(f,T)^+$.
\end{theorem}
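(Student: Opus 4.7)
I will establish the two implications of the equivalence separately and then handle the purity clause. For the sufficiency direction, iterating the intertwining $A_iG^{1/2}=G^{1/2}T_i$ gives $A_\alpha G^{1/2}=G^{1/2}T_\alpha$ for every $\alpha\in\FF_n^+$, so (using Lemma \ref{wot-cp} to guarantee WOT-convergence of the relevant series)
\begin{equation*}
\Phi_{f,A}^k(G)=G^{1/2}\Phi_{f,T}^k(I)G^{1/2},\qquad k\in \NN.
\end{equation*}
Hence $(id-\Phi_{f,A})^s(G)=G^{1/2}(id-\Phi_{f,T})^s(I)G^{1/2}\ge 0$ for $s=1,\ldots,m$, and $G\in C(f,A)^+$.

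For the necessity direction, the key structural observation is that $\ker G$ is invariant under every $A_i^*$: the inequality $\Phi_{f,A}(G)\le G$ together with the positivity of each summand forces $\langle GA_\alpha^*x,A_\alpha^*x\rangle=0$ for $x\in\ker G$ and $a_\alpha>0$. Thus $\cN:=\overline{\ran G^{1/2}}=(\ker G)^\perp$ is invariant under each $A_i$, and with respect to $\cH=\cN\oplus\cN^\perp$ we have $A_i=\left(\begin{smallmatrix}A_i^{11}&A_i^{12}\\0&A_i^{22}\end{smallmatrix}\right)$ and $G=G_0\oplus 0$ with $G_0$ positive and injective on $\cN$. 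Applying Douglas's factorization lemma to the inequality $A_iGA_i^*\le a_{g_i}^{-1}G$ produces operators $T_i\in B(\cH)$ with $A_iG^{1/2}=G^{1/2}T_i$ and $\ran T_i\subseteq\cN$; iterating yields $A_\alpha G^{1/2}=G^{1/2}T_\alpha$ for every $\alpha$, and SOT-convergence of $\sum a_\alpha T_\alpha T_\alpha^*$ follows from a bounded-increasing argument carried out on the dense subspace $\ran G^{1/2}\subseteq\cN$.

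The required conditions $(id-\Phi_{f,T})^s(I)\ge 0$ and $p(T)=0$ for $p\in\cP$ then fall out of the squeezed identities
\begin{equation*}
G^{1/2}(id-\Phi_{f,T})^s(I)G^{1/2}=(id-\Phi_{f,A})^s(G)\ge 0,\qquad G^{1/2}p(T)=p(A)G^{1/2}=0,
\end{equation*}
combined with the injectivity of $G_0^{1/2}$ on $\cN$ and the density of $\ran G^{1/2}$ in $\cN$. The non-uniqueness of Douglas's factorization on the $\cN^\perp$-component allows one to choose $T_i|_{\cN^\perp}$ (exploiting that $p(A)=0$ forces the $(2,2)$-block relation $p(A^{22})=0$) so that $T\in\cV^m_{f,\cP}(\cH)$ globally, not just on $\cN$.

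Finally, for the purity clause: assume $G\in C_{pure}(f,A)^+$, i.e.\ $\Phi_{f,A}^k(G)\to 0$ strongly. The identity $\Phi_{f,A}^k(G)=G^{1/2}\Phi_{f,T}^k(I)G^{1/2}$ together with the fact that $\{\Phi_{f,T}^k(I)\}$ is a bounded decreasing sequence of positive operators (Lemma \ref{ineq-lim}) gives a SOT-limit $L\ge 0$ with $G_0^{1/2}LG_0^{1/2}=0$; a standard positive-square-root argument together with the density of $\ran G_0^{1/2}$ in $\cN$ forces $L|_\cN=0$, and the $\cN^\perp$-part is handled by the block construction. The converse is immediate from the identity above. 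The main obstacle I foresee is the careful coordination of the $\cN^\perp$-block of $T$ so that $T\in\cV^m_{f,\cP}(\cH)$, $p(T)=0$, and the purity equivalence all hold on the full Hilbert space $\cH$, which is where the hypothesis $p(A)=0$ and the freedom left in Douglas's factorization are essential.
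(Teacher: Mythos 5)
Most of your argument coincides with the paper's own proof. The sufficiency direction via the squeezed identity $(id-\Phi_{f,A})^s(G)=G^{1/2}\bigl[(id-\Phi_{f,T})^s(I)\bigr]G^{1/2}$ is the same; and in the necessity direction your appeal to Douglas's factorization applied to $a_{g_i}A_iGA_i^*\leq \Phi_{f,A}(G)\leq G$ is exactly the paper's explicit construction ($\Lambda_i G^{1/2}x:=G^{1/2}A_i^*x$ on $\cN:=\overline{G^{1/2}(\cH)}$, $M_i:=\Lambda_i^*$), followed, as in the paper, by the density of $\ran G^{1/2}$ in $\cN$ to get $(id-\Phi_{f,M})^s(I_\cN)\geq 0$ and $p(M)=0$, and by the same limiting argument for the purity clause. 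Your preliminary observations that $\ker G$ is invariant under each $A_i^*$ and that $A_i$ is block upper-triangular with respect to $\cN\oplus\cN^\perp$ are correct but play no role in the paper and are not needed.

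The genuine gap is precisely the step you flag as the main obstacle: the extension of $T_i$ to $\cN^\perp$. You propose to use the non-uniqueness in Douglas's lemma to make the $\cN^\perp$-corner of $T_i$ imitate $A_i^{22}$ (so that $p(A^{22})=0$ can be exploited). This cannot work in general: the hypotheses put no constraint whatsoever on the compression of $A$ to $\ker G$, so a tuple whose $\cN^\perp$-block copies $A^{22}$ need not satisfy even $\Phi_{f,T}(I)\leq I$, let alone $(id-\Phi_{f,T})^s(I)\geq 0$ for all $s\leq m$; it also clashes with your earlier normalization $\ran T_i\subseteq\cN$. The correct choice — and the paper's — is the zero extension: the reduced Douglas solution (the one with range in $\cN$) automatically annihilates $\cN^\perp$ and coincides with the paper's $T_i=M_i\oplus 0$; then $(id-\Phi_{f,T})^s(I)=(id-\Phi_{f,M})^s(I_\cN)\oplus I_{\cN^\perp}\geq 0$, the intertwining $A_iG^{1/2}=G^{1/2}T_i$ holds on all of $\cH$ because $G^{1/2}$ vanishes on $\cN^\perp$, and $p(T_1,\ldots,T_n)=0$ for $p\in\cP$ follows from $p(M)^*G^{1/2}=G^{1/2}p(A)^*=0$ together with the density of $\ran G^{1/2}$ in $\cN$ (the constant-term scenario you are implicitly guarding against is not the intended setting here, and your device would not rescue it anyway, since membership in ${\bf D}_f^m(\cH)$ would already fail). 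With the zero extension in place, your purity argument goes through verbatim, since $\Phi_{f,T}^k(I)=\Phi_{f,M}^k(I_\cN)\oplus 0$ for $k\geq 1$.
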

\begin{proof}
First, assume that  $T:=(T_1,\ldots, T_n)\in \cV^m_{f,\cP}(\cH)$
satisfies
  $
 A_i G^{1/2}= G^{1/2} T_i$,  for any $i=1,\ldots, n$.
 Then  we have
 $$
  (id-\Phi_{f,A})^s(G)= G^{1/2}\left[(id-\Phi_{f,T})^s(I)\right] G^{1/2}
\geq 0,\qquad s=1,\ldots,m.
 $$
Taking into account that $ \Phi_{f,A}^k(G)=G^{1/2}  \Phi_{f,T}^k(I)
G^{1/2} $, $k\in \NN$, it is clear that if $ \Phi_{f,T}^k(I)\to 0$
strongly, as $k\to\infty$,  then $G\in C_{pure}(f,A)^+$ .

 To prove the converse, assume that $G\in B(\cH)$  is in $C (f,A)^+$.
   Since
\begin{equation*}
\begin{split}
\sum_{|\alpha|\geq 1} \|G^{1/2} \sqrt{a_\alpha} A_\alpha^* x\|^2
=\left< \Phi_{f, A}(G)x,x\right>\leq \|G^{1/2} x\|^2
\end{split}
\end{equation*}
for any $x\in   \cH$, we deduce that  $a_{g_i}\|G^{1/2}A_i^*
x\|^2\leq \|G^{1/2} x\|^2$, for any $x\in   \cH$. Recall that
$a_{g_i}\neq 0$,  so we can define the operator
 $\Lambda_i:  G^{1/2}(\cH)\to
   G^{1/2}(\cH) $  by setting
 \begin{equation}\label{GiC}
 \Lambda_i G^{1/2}x:= G^{1/2} A_i^*x, \qquad  x\in \cH,
 \end{equation}
 for $i=1,\ldots, n$.
It is obvious that $\Lambda_i$
   can    be extended to a bounded operator (also
denoted by $\Lambda_i$) on the subspace $\cM:=\overline{
G^{1/2}(\cH)}$. Set $M_i:=\Lambda_i^*$, $i=1,\ldots, n$, and note
that
$$
G^{1/2}\left[(id-\Phi_{f,M})^s(I_\cM)\right] G^{1/2}=
(id-\Phi_{f,A})^s(G)\geq 0, \qquad s=1,\ldots,m.
$$
An approximation argument shows that
$$
(id-\Phi_{f,M})^s(I_\cM)  \geq 0,\qquad s=1,\ldots,m.
$$
Define $T_i:= M_i\oplus 0$, $i=1,\ldots, n$,
  with respect to the decomposition
 $\cH= \cM\oplus \cM^\perp$,
   and note that
 $(id-\Phi_{f,T})^s(I)\geq 0$, $s=1,\ldots,m$.
Due to relation \eqref{GiC}, if $p\in \cP$, then we have
$$
p(M_1,\ldots,M_n)^*G^{1/2} =G^{1/2} p(A_1,\ldots, A_n)^*=0.
$$
Hence,  $p(M_1,\ldots,M_n)=0$ and, consequently,   $p(T_1,\ldots,
T_n)=0$ for all $p\in \cP$. Therefore,  $(T_1,\ldots, T_n)\in
\cV^m_{f,\cP}(\cH)$ and  $A_i G^{1/2}= G^{1/2} T_i$,  $i=1,\ldots,
n$.

 Assume  now that  $G\in C_{pure}(f,A)^+$, i.e., $ \Phi_{f,A}^k(G)\to 0$ strongly,
   as $k\to\infty$.
Since
\begin{equation*}
\begin{split}
\left< \Phi_{f, T}^k(I) G^{1/2}x, G^{1/2} x\right> &= \left<
\Phi_{f,
 A}^k(G)x,  x\right>,\qquad x\in   \cH,
\end{split}
\end{equation*}
  we have \ SOT-$\lim_{k\to\infty}\Phi_{f,T}^k(I)y=0$ for
any $y\in \text{\rm range}~G^{1/2}$. Taking into account that
$\|\Phi_{f,T}^k(I)\|\leq 1$, $k\in \NN$,  an approximation argument
shows that SOT-$\lim_{k\to\infty}\Phi_{f,T}^k(I)y=0$ for any $y\in
\overline {G^{1/2}(\cH)}$. On the other hand, we have
$\Phi_{f,T}^k(I)z=0$ for any $z\in \cM^\perp$. Consequently,
 $I_\cH \in
C_{pure}(f,T)^+$.
 This completes the proof.
\end{proof}

In what follows  we consider  the case when  $m=1$.  Let
  $f :=\sum_{|\alpha|\geq 1} a_\alpha X_\alpha$
   be
a positive regular free holomorphic function and let $\cP$ be a
family of noncommutative
 polynomials such that $\cN_\cP\neq \{0\}$.
We  have
$$
 {\bf D}^1_f(\cH):=\{(X_1,\ldots, X_n)\in B(\cH)^n: \
\sum_{|\alpha|\geq 1} a_\alpha X_\alpha X_\alpha^* \leq I \}.
$$
  Let  $B:=(B_1,\ldots,
B_n)$ be the universal
  model associated with the noncommutative variety
 $\cV^1_{f,\cP}$.
We introduced in \cite{Po-domains} the
 noncommutative Hardy algebra
 $F_n^\infty(\cV^1_{f,\cP})$  to be the $w^*$-closed algebra generated by
$B_1,\ldots, B_n$ and the identity. We also  showed that $
F_n^\infty(\cV^1_{f,\cP})=P_{\cN_\cP}F_n^\infty({\bf D}^1_f)
|_{\cN_\cP}$. Similar results hold for $R_n^\infty(\cV^1_{f,\cP})$,
the $w^*$-closed algebra generated by $C_1,\ldots, C_n$ and the
identity, where $C_i:=P_{\cN_\cP} \Lambda_i |_{\cN_\cP}$, and
$\Lambda_1,\ldots, \Lambda_n$ are the weighted right creation
operators associated with ${\bf D}^1_f$ (see Section 1).
 Moreover,  we proved  that
\begin{equation*}
F_n^\infty(\cV^1_{f,\cP})^\prime=R_n^\infty(\cV^1_{f,\cP})\ \text{
and } \ R_n^\infty(\cV^1_{f,\cP})^\prime=F_n^\infty(\cV^1_{f,\cP}),
\end{equation*}
where $^\prime$ stands for the commutant. An operator $M\in
B(\cN_{\cP}\otimes \cK,\cN_{\cP}\otimes \cK')$ is called
multi-analytic with respect to the constrained  weighted shifts $B_1,\ldots,
B_n$ if
$$
M(B_i\otimes I_{\cK})=(B_i\otimes I_{\cK'})M,\qquad i=1,\ldots, n.
$$
   According to \cite{Po-domains}, the set of all multi-analytic operators with respect to
 $B_1,\ldots, B_n$ coincides  with
$$
R_n^\infty(\cV^1_{f,\cP})\bar\otimes B(\cK,\cK')=P_{\cN_\cP\otimes
\cK'}[R_n^\infty({\bf D}^1_f)\bar\otimes
B(\cK,\cK')]|_{\cN_\cP\otimes \cK},
$$
and a similar result holds for  the  Hardy algebra
$F^\infty_n(\cV^1_{f,\cP})$. For more information on multi-analytic
operators, we refer the reader to \cite{Po-analytic} and
\cite{Po-domains}.

 \begin{theorem}\label{Adv}
Let $\cP$ be a family of noncommutative
 polynomials  with $\cN_\cP\neq \{0\}$ and let  $B:=(B_1,\ldots, B_n)$ be
the universal
  model associated with the noncommutative variety
 $\cV^1_{f,\cP}$, where  $f:=\sum_{ |\alpha|\geq 1} a_\alpha X_\alpha$
 is  a positive regular  free holomorphic function.
If $T:=(T_1,\ldots, T_n)$ is a pure
 $n$-tuple of operators
  in the noncommutative variety  $ \cV^1_{f,\cP}(\cH)$,
then $$C (f,T)^+ = C_{pure} (f,T)^+$$
  and any operator in $C (f,T)^+$ has the  form
    $G=P_\cH \Psi \Psi^*|_\cH $, where $\Psi$ is a multi-analytic operator
     with respect to $B_1,\ldots, B_n$.
 \end{theorem}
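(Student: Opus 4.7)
My strategy is first to obtain the equality $C(f,T)^+=C_{pure}(f,T)^+$ by a short monotonicity argument exploiting purity of $T$, and then to realise elements of $C_{pure}(f,T)^+$ as compressions of positive multi-analytic operators by combining the Berezin-kernel characterisation from Theorem~\ref{poisson2} with the pure embedding of $\cH$ into $\cN_\cP\otimes\cK'$.

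For the first equality I would note that if $G\in C(f,T)^+$ then $0\leq G\leq\|G\|I$, hence by monotonicity of the completely positive map $\Phi_{f,T}$ (Lemma~\ref{wot-cp}) one has $0\leq\Phi_{f,T}^k(G)\leq\|G\|\,\Phi_{f,T}^k(I)$. Purity of $T$ gives $\Phi_{f,T}^k(I)\to 0$ strongly, and the standard inequality $\|Px\|^2\leq\|P\|\langle Px,x\rangle$ valid for any positive operator $P$ promotes $\langle\Phi_{f,T}^k(G)x,x\rangle\to 0$ to $\Phi_{f,T}^k(G)x\to 0$; this gives $G\in C_{pure}(f,T)^+$, and the opposite inclusion is immediate from the definitions.

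For the representation, fix $G\in C_{pure}(f,T)^+$. The last clause of Theorem~\ref{poisson2} produces a Hilbert space $\cD$ and a bounded operator $K:\cH\to\cN_\cP\otimes\cD$ with $G=K^*K$ and $KT_i^*=(B_i^*\otimes I_\cD)K$. Applying the same theorem to the distinguished pure solution $I\in C_{pure}(f,T)^+$ (available because $T$ is itself pure) yields an \emph{isometry} $V:\cH\to\cN_\cP\otimes\cK'$ with $VT_i^*=(B_i^*\otimes I_{\cK'})V$, so that $V$ identifies $\cH$ with a $(B_i^*\otimes I_{\cK'})$-invariant subspace of $\cN_\cP\otimes\cK'$ on which $T_i^*$ acts as the compression. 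Define $\Psi_0:V(\cH)\to\cN_\cP\otimes\cD$ by $\Psi_0(Vx):=Kx$; the intertwining relations for $K$ and $V$ give
\[
\Psi_0\bigl((B_i^*\otimes I_{\cK'})|_{V(\cH)}\bigr)=(B_i^*\otimes I_\cD)\Psi_0,\qquad i=1,\dots,n.
\]
I would then invoke the noncommutative commutant (intertwining) lifting theorem for the variety $\cV^1_{f,\cP}$, established in earlier work, to extend $\Psi_0$ to a bounded operator $\Phi:\cN_\cP\otimes\cK'\to\cN_\cP\otimes\cD$ satisfying $\Phi(B_i^*\otimes I_{\cK'})=(B_i^*\otimes I_\cD)\Phi$, so that $\Psi:=\Phi^*$ is multi-analytic, i.e.\ $\Psi\in R_n^\infty(\cV^1_{f,\cP})\,\bar\otimes\,B(\cD,\cK')$. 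Since $\Phi$ extends $\Psi_0$, one has $Kx=\Psi^*x$ for $x\in\cH\cong V(\cH)$, and therefore $G=K^*K=P_\cH\Psi\Psi^*|_\cH$, which is exactly the desired form.

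The main obstacle I anticipate is the lifting step: one must verify that the intertwiner $\Psi_0$ defined on the invariant subspace $V(\cH)$ genuinely extends to a multi-analytic operator lying in the variety algebra $R_n^\infty(\cV^1_{f,\cP})\,\bar\otimes\,B(\cD,\cK')$, and not merely to some intertwiner on the ambient weighted-Fock setting. This should reduce, via the compression description $R_n^\infty(\cV^1_{f,\cP})\,\bar\otimes\,B(\cD,\cK')=P_{\cN_\cP\otimes\cK'}[R_n^\infty(\bD^1_f)\,\bar\otimes\,B(\cD,\cK')]|_{\cN_\cP\otimes\cD}$, to Popescu's commutant lifting for the unconstrained domain $\bD^1_f$, but one must confirm that the variety constraint (vanishing on $\cM_\cP$) is preserved by the lift.
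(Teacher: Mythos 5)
Your proof is correct, and its first half (the equality $C(f,T)^+=C_{pure}(f,T)^+$ via $0\leq \Phi_{f,T}^k(G)\leq \|G\|\,\Phi_{f,T}^k(I)$) is exactly the paper's argument. For the representation you take a genuinely different, though closely related, route: both proofs ultimately rest on Theorem \ref{poisson2} and on the constrained commutant lifting theorem of \cite{Po-domains}, but they are deployed differently. The paper first invokes Theorem \ref{pure} to manufacture an auxiliary tuple $D=(D_1,\ldots,D_n)\in \cV^1_{f,\cP}(\cH)$ with $T_iG^{1/2}=G^{1/2}D_i$, dilates both $T$ and $D$ to $B\otimes I_{\cM_1}$ and $B\otimes I_{\cM_2}$ (Theorem 3.20 of \cite{Po-domains}), lifts $G^{1/2}$ by commutant lifting to an operator $\widehat G$ intertwining the two dilations, sets $Q=\widehat G\widehat G^*$ to get $G=P_\cH Q|_\cH$ together with the intermediate identity $C_{pure}(f,T)^+=P_\cH\left[C_{pure}(f,B\otimes I_{\cM_1})^+\right]|_\cH$, and only at the end applies Theorem \ref{poisson2} (to $Q$, relative to $B\otimes I_{\cM_1}$) to write $Q=\Psi\Psi^*$ with $\Psi$ multi-analytic. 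You instead apply Theorem \ref{poisson2} twice on $\cH$ itself: to $G$, producing $K$ with $G=K^*K$ and $KT_i^*=(B_i^*\otimes I_\cD)K$, and to the distinguished solution $I$ (legitimate, since $m=1$ and $T$ is pure), producing the isometry $V$ — which is just the constrained Berezin kernel, so you recover the dilation of $T$ internally — and then lift the intertwiner $\Psi_0=K V^{-1}$ defined on the co-invariant subspace $V(\cH)$. This bypasses Theorem \ref{pure}, the auxiliary tuple $D$ and the second dilation space, at the price of applying the lifting theorem in the slightly asymmetric case where the target tuple $B\otimes I_\cD$ is its own dilation; that is a legitimate special case of the same Theorem 4.2 of \cite{Po-domains} the paper uses. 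Your closing worry is therefore unfounded: the statement only requires the relations $\Psi(B_i\otimes I_\cD)=(B_i\otimes I_{\cK'})\Psi$, which the lift delivers directly, and Section 2 already records that such intertwiners are precisely the elements of $R_n^\infty(\cV^1_{f,\cP})\,\bar\otimes\,B(\cD,\cK')$, obtained by compressing $R_n^\infty({\bf D}^1_f)\,\bar\otimes\,B(\cD,\cK')$ to $\cN_\cP\otimes$ the coefficient spaces, so no separate verification that the variety constraint survives the lift is needed.
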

 \begin{proof} Assume that $T:=(T_1,\ldots, T_n)$ is a pure
 $n$-tuple of operators
  in the noncommutative variety  $ \cV^1_{f,\cP}(\cH)$, i.e.,
   $\Phi_{f,T}^k(I)\to 0$ strongly, as $k\to \infty$. If $G\in C (f,T)^+$, then $G\geq 0$ and
 $ \Phi_{f,T}(G)\leq  G$.   Since
  $$0\leq \Phi_{f,T}^k(G)\leq \|G\|\, \Phi_{f,T}^k(I),\qquad k=1,2,
  \ldots,
  $$
    we infer that
   $ G\in C_{pure} (f,T)^+$. Consequently, we have  $C (f,T)^+ = C_{pure}
   (f,T)^+$.
Now, fix an operator  $G\in C_{pure}
   (f,T)^+$.
  Due to  Theorem \ref{pure}, we
find $D_i\in B(\cH)$ satisfying
$$
T_i G^{1/2}= G^{1/2} D_i, \qquad i=1,\ldots, n,
$$
  where $(D_1,\ldots, D_n)\in \cV^1_{f,\cP}(\cH)$  and  $\Phi_{f,D}^k(I)\to 0$ strongly,  as $ k\to
  0$.
According  to Theorem 3.20 from \cite{Po-domains}, there is a
Hilbert space $\cM_1$ so that $(B_1\otimes I_{\cM_1},\ldots,
B_n\otimes I_{\cM_1})$ is a dilation of $(T_1,\ldots, T_n)$ on the
Hilbert space $\cK_1:= \cN_\cP\otimes \cM_1\supseteq \cH$, i.e.,
$T_i=P_\cH(B_i\otimes I_{\cM_1})|_{\cH}$, $i=1,\ldots, n$, and $\cH$
is invariant under each operator $B_i^*\otimes I_{\cM_1}$.
 Similarly,
   let $(B_1\otimes I_{\cM_2},\ldots, B_n\otimes I_{\cM_2})$  be a dilation
   of $(D_1,\ldots, D_n)$  on a Hilbert space $\cK_2:=\cN_\cP\otimes \cM_2 \supseteq \cH$.
 According to the noncommutative commutant lifting theorem from
 \cite{Po-domains} (see Theorem 4.2), there exists an operator
  $\widehat{G}:\cK_2\to \cK_1$
 such that $ \widehat{G}^*(\cH)\subset \cH$,  $
 \widehat{G}^*|_\cH=G^{1/2}$,
  \ $\|\widehat{G}\|=\|G^{1/2}\|$, and
 $$
 \widehat{G}^*(B_i^*\otimes I_{\cM_1}) =(B_i^*\otimes I_{\cM_2})\widehat{G}^*, \qquad  i=1,\ldots, n.
 $$
 It is easy to see that
 $$
 \Phi_{f, B\otimes I_{\cM_1}}( \widehat{G}  \widehat{G}^*)=
  \widehat{G}   \Phi_{f, B\otimes I_{\cM_2}}(I)\widehat{G}^*\leq \widehat{G}
  \widehat{G}^*.
 $$
 Setting $Q:= \widehat{G}  \widehat{G}^*$, we have   \ $\|Q\|=\|G\|$,
   and  $$G = P_\cH \widehat G|_\cH G^{1/2}=P_\cH \widehat G \widehat G^*|_\cH=P_\cH
   Q|_\cH.
   $$
  Note also that
  $$
  \Phi_{f, B\otimes I_{\cM_1}}^k( \widehat{G} \widehat{G}^*)=
  \widehat{G}   \Phi_{f, B\otimes I_{\cM_2}}^k(I)\widehat{G}^*,\qquad k\in \NN.
  $$
Since $\Phi_{f, B\otimes I_{\cM_2}}^k(I)\to 0$ strongly, as $k\to
 \infty$, we deduce that $ \Phi_{f, B\otimes I_{\cM_1}}^k( \widehat{G}
 \widehat{G}^*)\to 0$ strongly. Therefore, $Q\in C_{pure}(f, B\otimes
 I_{\cM_1})^+$
 and $G=P_\cH
   Q|_\cH.$

Conversely, if $Q\in C_{pure}(f, B\otimes
 I_{\cM_1})^+$,
 then
 \begin{equation*}\begin{split}
 \Phi_{f,T}(P_\cH Q|_\cH)&= \sum_{|\alpha|\geq 1}   a_\alpha T_\alpha (P_\cH Q|_\cH)T_\alpha^*\\
 &=
 P_\cH [ \Phi_{f, W\otimes I_{\cM_1}}(P_\cH Q|_\cH)] |_\cH \\
 &\leq P_\cH [ \Phi_{f, W\otimes I_{\cM_1}}( Q)] |_\cH\\
 &\leq P_\cH Q|_\cH.
 \end{split}
 \end{equation*}
On the other hand,  since
 $$
0\leq  \Phi_{f,T}^k(P_\cH Q|_\cH)\leq  P_\cH \Phi_{f,B\otimes
 I_{\cM_1}}^k(Q)|_\cH\to 0,\quad \text{ as } k\to\infty,
$$
  it is clear that $G:=P_\cH Q|_\cH$ is  in $  C_{pure}
   (f,T)^+$. We have proved that
   $$
   C_{pure}
   (f,T)^+= P_\cH\left[C_{pure}(f, B\otimes
 I_{\cM_1})^+\right]|_\cH.
 $$

Now, we determine the set $C_{pure}(f, B\otimes
 I_{\cM_1})^+$. To this end, let  $Q\in C_{pure}(f, B\otimes
 I_{\cM_1})^+$.
    According to  Theorem \ref{poisson2},  $Q\in C_{pure}(f, B\otimes
 I_{\cM_1})^+$ if and only if there is a Hilbert space $\cD$ and an
 operator
 $K:\cN_\cP\otimes \cM_1\to \cN_\cP\otimes \cD$ such that  $Q=K^*K$
 and
$$
    (B_i\otimes I_{\cM_1}) K^*= K^* (B_i\otimes I_\cD), \qquad i=1,\ldots, n,
    $$
    i.e., $K^*$ is a multi-analytic operator  with respect to $B_1,\ldots, B_n$.
 The proof is complete.
 \end{proof}

\bigskip

\section{ Joint similarity   to operators in noncommutative varieties }\label{Similarity}

In  this section  we
 provide necessary and sufficient conditions
for  an $n$-tuple  $A:=(A_1,\ldots, A_n)\in B(\cH)^n$ to
be jointly similar  to an $n$-tuple $T:=(T_1,\ldots, T_n)$
satisfying one of the following properties:
\begin{enumerate}
\item[(i)]
$T\in {\cV}_{f,\cP}^m(\cH)$;
\item[(ii)]
$T\in  \left\{ X \in {\cV}_{f,\cP}^m(\cH)): \
 (id-\Phi_{f,X})^m(I)= 0\right\}$;
\item[(iii)] $T\in \left\{ X \in {\cV}_{f,\cP}^m(\cH)): \
 (id-\Phi_{f,X})^m(I)>0\right\}$;
\item[(iv)]  $T$ is a pure  $n$-tuple in ${\cV}_{f,\cP}^m(\cH)$, i.e., $\Phi_{f,T}^k(I)\to 0$ strongly, as $k\to \infty$,
\end{enumerate}
where $\cP$ is a set of noncommutative polynomials.
We show that these similarities are strongly related to the
existence of invertible positive solutions of the operator
inequality $(id-\Phi_{f,A})^m(Y)\geq 0$ or equation  $
(id-\Phi_{f,A})^m(Y)= 0$. Several classical results concerning  the similarity
 to contractions have  analogues  in our  multivariable setting.

Let $f=\sum_{|\alpha|\geq 1 } a_\alpha X_\alpha$ be a positive
regular free holomorphic. For any $n$-tuple of operators
$A:=(A_1,\ldots, A_n)\in B(\cH)^n$   such that $\sum_{|\alpha|\geq
1} a_\alpha A_\alpha A_\alpha^* $ is convergent in the weak operator
topology,  define the joint spectral radius with respect to the
noncommutative domain ${\bf D}^m_f$ by setting
$$
r_f(A_1,\ldots, A_n):=\lim_{k\to\infty}\|\Phi_{f,A}^k(I)\|^{1/2k}.
$$
In the particular case when $f:=X_1+\cdots +X_n$, we obtain the
usual definition of the joint operator radius for $n$-tuples of
operators.

Our first result provides  necessary conditions for joint similarity
to $n$-tuples of operators in noncommutative varieties
$\cV_{f,\cP}^m(\cH)$.

\begin{proposition} \label{properties}  Let $f:=\sum_{|\alpha|\geq 1} a_\alpha X_\alpha$ be
a positive regular free holomorphic function and let $T:=(T_1,\ldots,
T_n)\in B(\cH)^n$ and
 $A:=(A_1,\ldots,
A_n)\in B(\cK)^n$ be  two $n$-tuples of operators which are jointly
similar, i.e., there exists an invertible operator $Y:\cH\to \cK$
such that
$$
A_i=Y T_i Y^{-1},\qquad i=1,\ldots,n.
$$
If  $\cP$ is a family of noncommutative
 polynomials  and
 $T\in \cV_{f,\cP}^m(\cH)$, then the following statements hold:
  \begin{enumerate}
  \item[(i)]$\sum_{|\alpha|\geq 1 } a_\alpha A_\alpha A_\alpha^*$ is convergent in the weak operator topology;
       \item[(ii)] $\Phi_{f,A}$ is a power bounded completely positive linear map;
      \item[(iii)] $r_f(A_1,\ldots, A_n)\leq 1$;
     \item[(iv)] $p(A_1,\ldots, A_n)=0$ for all $p\in \cP$;
     \item[(v)] if $\Phi_{f,T}^k(I)\to 0$
 strongly, as $k\to \infty$, then $\Phi_{f,A}^k(I)\to 0$
 strongly.
      \end{enumerate}
\end{proposition}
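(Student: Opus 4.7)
The whole proof reduces to one algebraic identity. From $A_i = Y T_i Y^{-1}$, a one-line induction on word length gives $A_\alpha = Y T_\alpha Y^{-1}$ for every $\alpha \in \FF_n^+$, and hence
\[
A_\alpha X A_\alpha^* = Y\, T_\alpha\bigl(Y^{-1} X (Y^{-1})^*\bigr) T_\alpha^*\, Y^* \qquad (X \in B(\cK)).
\]
This conjugation formula, together with its obvious iteration
\[
\Phi_{f,A}^k(X) = Y\, \Phi_{f,T}^k\bigl(Y^{-1} X (Y^{-1})^*\bigr)\, Y^* \qquad (k\in\NN),
\]
is the workhorse for every assertion.

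For (i), apply the formula with $X = I$ and observe that $Y^{-1}(Y^{-1})^* = (Y^*Y)^{-1} \le \|Y^{-1}\|^2 I$. Thus each finite partial sum $\sum_{\alpha \in \Lambda} a_\alpha T_\alpha (Y^*Y)^{-1} T_\alpha^*$ is a monotone net of positive operators dominated by $\|Y^{-1}\|^2 \sum a_\alpha T_\alpha T_\alpha^* \le \|Y^{-1}\|^2 I$, using $\cV_{f,\cP}^m(\cH) \subset \bD_f^m(\cH) \subset \bD_f^1(\cH)$. The net therefore converges in SOT; sandwiching by $Y$ on the left and $Y^*$ on the right preserves this and delivers (i). Lemma \ref{wot-cp} then yields the complete positivity half of (ii).

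For power-boundedness in (ii), the iterated formula with $X = I$ and the bound $\|\Phi_{f,T}^k(I)\| \le 1$ give $\|\Phi_{f,A}^k(I)\| \le \|Y\|^2\|Y^{-1}\|^2$; since $\Phi_{f,A}^k$ is completely positive, $\|\Phi_{f,A}^k\| = \|\Phi_{f,A}^k(I)\|$, and power-boundedness follows. Taking $2k$-th roots and letting $k \to \infty$ yields (iii). Claim (iv) is immediate: $p(A_1,\ldots,A_n) = Y p(T_1,\ldots,T_n) Y^{-1} = 0$. For (v), positivity of $(Y^*Y)^{-1}$ and the conjugation formula give $0 \le \Phi_{f,A}^k(I) \le \|Y^{-1}\|^2 Y \Phi_{f,T}^k(I) Y^*$, so for each $x \in \cK$ we have $\langle \Phi_{f,A}^k(I)x, x\rangle \le \|Y^{-1}\|^2 \langle \Phi_{f,T}^k(I) Y^*x, Y^*x\rangle \to 0$; since the operators $\Phi_{f,A}^k(I)$ are positive and uniformly norm-bounded, WOT-convergence to $0$ upgrades to SOT-convergence via the standard $B_k^{1/2}$ argument. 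The only thing worth watching throughout is the SOT-vs.-WOT bookkeeping in (i) and (v); no step should present real difficulty once the conjugation identity is in hand.
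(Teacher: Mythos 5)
Your proof is correct and takes essentially the same route as the paper's: both rest on the identity $A_\alpha=YT_\alpha Y^{-1}$, the resulting domination $\Phi_{f,A}^k(I)\leq \|Y^{-1}\|^2\,Y\Phi_{f,T}^k(I)Y^*$ (partial sums for (i), iterates for (ii)--(v)), and Lemma \ref{wot-cp} for complete positivity. The only additions are details the paper leaves implicit, e.g.\ the square-root argument upgrading WOT- to SOT-convergence in (v).
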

\begin{proof} Note that
\begin{equation*}
\begin{split}
\sum_{1\leq|\alpha|\leq k} a_\alpha A_\alpha A_\alpha^* &=
\sum_{1\leq|\alpha|\leq k} a_\alpha Y T_\alpha Y^{-1}
(Y^{-1})^*T_\alpha^* Y^*\\
&\leq \|Y^{-1}\|^2\, Y\left(\sum_{1\leq|\alpha|\leq k} a_\alpha
T_\alpha  T_\alpha^*\right) Y^*
\end{split}
\end{equation*}
for any $k\in \NN$. Since  $T:=(T_1,\ldots, T_n)\in
\cV_{f,\cP}^m(\cH)$, the series $\sum_{|\alpha|\geq 1} a_\alpha
T_\alpha T_\alpha^*$ is convergent in the weak operator topology and
$p(T_1,\ldots, T_n)=0$ for all $p\in \cP$. Now, due to inequality
above, it is easy to see that item (i) holds and
$$
\Phi_{f,A}(I)\leq \|Y^{-1}\|^2 \,Y\Phi_{f,T}(I) Y^*\leq
\|Y^{-1}\|^2\| Y\|^2 I.
$$
According to Lemma \ref{wot-cp}, $\Phi_{f,A}$ is a completely
positive map. As  above, one can also show that
$$
\Phi^k_{f,A}(I)\leq \|Y^{-1}\|^2 \,Y\Phi^k_{f,T}(I) Y^*\leq
\|Y^{-1}\|^2\| Y\|^2 I,\qquad k\in \NN,
$$
which proves item (ii) and implies items (iii) and (v). Since item
(iv) is obvious, the proof is complete.
\end{proof}

 We recall that $C (f,A)^+$  is the cone of all positive operators $D\in B(\cH)$ such that  $(id-\Phi_{f, A})^s(D)\geq 0$ for  $s=1,\ldots, m$.
  Now, we are ready to  provide necessary and sufficient conditions for the joint similarity to
  parts of the adjoints
  of  the universal model $(B_1,\ldots, B_n)$  associated with the
noncommutative variety $\cV^m_{f,\cP}$.

\begin{theorem}\label{pure-contr} Let $m\geq 1$,  $f:=\sum_{|\alpha|\geq 1} a_\alpha X_\alpha$ be
a positive regular free holomorphic function and let $\cP$ be a family of
noncommutative
 polynomials   with $\cN_\cP\neq 0$. If  $A:=(A_1,\ldots,
A_n)\in B(\cH)^n$  is such that $\sum_{|\alpha|\geq 1} a_\alpha
A_\alpha A_\alpha^* $ is convergent in the weak operator topology
and $p(A_1,\ldots, A_n)=0$,\quad $p\in \cP$, then the following
statements are equivalent.
\begin{enumerate}
\item[(i)] There exists an invertible operator $Y:
\cH\to \cG$ such that
$$
A_i^*=Y^{-1}[(B_i^*\otimes I_\cH)|_\cG]Y,\qquad i=1,\ldots, n,
$$
where
 $\cG\subseteq \cN_\cP\otimes \cH$ is an invariant  subspace under  each operator $B_i^*\otimes
 I_\cH$  and $(B_1,\ldots, B_n)$ is
the universal
  model associated with the noncommutative variety
 $\cV^m_{f,\cP}$.
 \item[(ii)] There is   an
 invertible operator $Q\in C(f,A)^+$ such that  $ \Phi_{f,A}^k(Q)\to 0$ strongly, as $k\to \infty$.
\item[(iii)]  There exist
constants $0<a\leq b$ and a positive operator $R\in B(\cH)$ such
that
$$
aI\leq \sum_{k=0}^\infty \left(\begin{matrix} k+m-1\\ m-1
\end{matrix}\right)\Phi_{f,A}^k(R)\leq bI.
$$
\end{enumerate}
\end{theorem}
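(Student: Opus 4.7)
The plan is to establish the chain of implications (i) $\Rightarrow$ (ii) $\Rightarrow$ (iii) $\Rightarrow$ (i), using the noncommutative Berezin kernel machinery developed in Sections 1 and 2 as the engine.

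\textbf{Step (i) $\Rightarrow$ (ii).} Suppose $Y:\cH\to\cG$ is invertible with $YA_i^*=(B_i^*\otimes I_\cH)Y$, and set $Q:=Y^*Y$, which is positive and invertible since $Y$ is invertible as a map onto $\cG$. Taking adjoints gives $A_iY^*=Y^*(B_i\otimes I_\cH)$, and more generally $A_\alpha Y^*=Y^*(B_\alpha\otimes I_\cH)$ for every $\alpha\in\FF_n^+$. Summing against $a_\alpha$ and iterating shows
\[
\Phi_{f,A}^k(Q)=Y^*\,\Phi_{f,B\otimes I_\cH}^k(I)\,Y\quad\text{and}\quad (id-\Phi_{f,A})^s(Q)=Y^*(id-\Phi_{f,B\otimes I_\cH})^s(I)Y
\]
for all $k\geq 0$ and $s=1,\dots,m$. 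Since $(B_1,\dots,B_n)\in\cV^m_{f,\cP}(\cN_\cP)$, the operator $(id-\Phi_{f,B\otimes I_\cH})^s(I)$ is positive, so $(id-\Phi_{f,A})^s(Q)\geq 0$, giving $Q\in C(f,A)^+$. Purity of the universal model ($\Phi_{f,B}^k(I)\to 0$ strongly, by property (ii) of $(W_1,\dots,W_n)$ recalled in Section~1, which passes to $B$ since $\cN_\cP$ is coinvariant) then yields $\Phi_{f,A}^k(Q)\to 0$ strongly.

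\textbf{Step (ii) $\Rightarrow$ (iii).} Given the invertible $Q\in C(f,A)^+$ with $\Phi_{f,A}^k(Q)\to 0$ strongly, set $R:=(id-\Phi_{f,A})^m(Q)\geq 0$. I would now apply the telescoping identity used in the proof of Theorem~\ref{poisson} (the one involving the Pascal identity $\binom{i+j}{j}-\binom{i+j-1}{j}=\binom{i+j-1}{j-1}$ together with Lemma~\ref{ineq-lim}) to get
\[
\sum_{k=0}^\infty\binom{k+m-1}{m-1}\Phi_{f,A}^k(R)=Q-\text{SOT-}\lim_{k\to\infty}\Phi_{f,A}^k(Q)=Q.
\]
Since $Q$ is invertible, there are constants $0<a\leq b$ with $aI\leq Q\leq bI$, which is exactly (iii).

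\textbf{Step (iii) $\Rightarrow$ (i).} With $R\geq 0$ satisfying the two-sided estimate, the quadruple $(f,m,A,R)$ is a compatible quadruple in the sense of Section~1. Because $p(A_1,\dots,A_n)=0$ for all $p\in\cP$, the tuple $q:=(f,m,A,R,\cP)$ is compatible, and by Lemma~\ref{lemma2} the constrained Berezin kernel $K_q:\cH\to\cN_\cP\otimes\overline{R^{1/2}(\cH)}$ satisfies
\[
K_qA_i^*=(B_i^*\otimes I_{\overline{R^{1/2}(\cH)}})K_q,\qquad K_q^*K_q=\sum_{k=0}^\infty\binom{k+m-1}{m-1}\Phi_{f,A}^k(R).
\]
The hypothesis $aI\leq K_q^*K_q\leq bI$ makes $K_q$ bounded below, so its range $\cG:=K_q(\cH)$ is a closed subspace of $\cN_\cP\otimes\overline{R^{1/2}(\cH)}\subseteq\cN_\cP\otimes\cH$, and $K_q:\cH\to\cG$ is a bijection with bounded inverse. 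The intertwining relation shows $\cG$ is invariant under each $B_i^*\otimes I_\cH$, and setting $Y:=K_q$ gives $A_i^*=Y^{-1}[(B_i^*\otimes I_\cH)|_\cG]Y$, which is (i).

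\textbf{Main obstacle.} The only delicate point is the telescoping identity in step (ii) $\Rightarrow$ (iii): one must justify that the series $\sum_{k\geq 0}\binom{k+m-1}{m-1}\Phi_{f,A}^k(R)$ converges strongly to $Q$. This rests on the combinatorial identity for binomial coefficients together with Lemma~\ref{ineq-lim}(ii)(a), which applies because $(id-\Phi_{f,A})^s(Q)\geq 0$ for $s=1,\dots,m$; the pure condition $\Phi_{f,A}^k(Q)\to 0$ then kills the tail. Everything else is a transcription of Lemmas~\ref{lemma1}--\ref{lemma2} and the fact that a bounded-below intertwiner produces a similarity to the compression on its (closed) range.
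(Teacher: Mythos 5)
Your proposal is correct and follows essentially the same route as the paper: for (i)$\Rightarrow$(ii) you take $Q=Y^*Y$ and transport positivity and purity from the universal model through the iterated intertwining; for (ii)$\Rightarrow$(iii) you use the same telescoping/binomial identity with Lemma \ref{ineq-lim} applied to $R=(id-\Phi_{f,A})^m(Q)$; and for (iii)$\Rightarrow$(i) you use the constrained Berezin kernel $K_q$ from Lemma \ref{lemma2}, bounded below by hypothesis, as the similarity onto its closed, $(B_i^*\otimes I)$-invariant range. No gaps worth noting.
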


\begin{proof} We prove that (i) $\Rightarrow$ (ii).  Assume
that (i) holds and let $a,b>0$ be such that $aI\leq Y^*Y\leq bI$.
Setting $Q:=Y^*Y$ and using the fact that $\Phi_{f,B}(I)\leq I$ and
$a_\alpha \geq 0$, we have
\begin{equation*}
\begin{split}
\Phi_{f,A}(Q)&= \sum_{|\alpha|\geq 1} a_\alpha Y^*[P_\cG(B_\alpha
B_\alpha^*\otimes I_\cH)|_\cG]Y\\
&=Y^*\left\{P_\cG\left[\sum_{|\alpha|\geq 1} a_\alpha  B_\alpha
B_\alpha^*\otimes I\right]|_\cG \right\}Y\\
&\leq Y^*Y=Q.
\end{split}
\end{equation*}
Similar calculations reveal that
$$
(id-\Phi_{f,A})^s(Q)=Y^*\left\{P_\cG\left[ (id-\Phi_{f,A})^s(I)\otimes I\right]|_\cG \right\}Y\geq 0,\qquad  s=1,\ldots,m.
$$
Therefore, $Q\in C(f,A)^+$. Since $(B_1,\ldots, B_n)$ is a pure
$n$-tuple in the noncommutative variety $\cV^m_{f,\cP}$, we have
$\Phi_{f,B}^k(I)\to 0$ strongly, as $k\to\infty$. Taking into
account that $\Phi_{f,A}^k(Q)= Y^*\left[P_\cG(\Phi_{f,B}^k(I)\otimes
I)|_\cG\right]Y$ for $k\in \NN$, we deduce that
 $\Phi_{f,A}^k(Q)\to 0$
strongly, as $k\to\infty$. Therefore   item  (ii) holds.

Now, we prove the implication  (ii) $\Rightarrow$ (iii).   Let $Q\in C(f,A)^+$
be an  invertible operator such that
  $\Phi_{f,A}^k(Q)\to 0$ strongly, as $k\to\infty$. Set
$R:=(id-\Phi_{f,A})^m(Q)$ and note  that, using  Lemma
\ref{ineq-lim}, we obtain
\begin{equation*}\begin{split}
\sum_{k=0}^\infty \left(\begin{matrix} k+m-1\\ m-1
\end{matrix}\right) \Phi_{f,A}^k(R )&=
Q- \text{\rm SOT-}\lim_{k\to \infty}\sum_{j=0}^{m-1}\left(\begin{matrix} k+j\\ j
\end{matrix}\right)\Phi_{f,A}^{k+1}(id-\Phi_{f,A})^j(Q)\\
&= Q-\text{\rm SOT-}\lim_{k\to \infty}  \Phi_{f,A}^k(Q)=Q.
\end{split}
\end{equation*}
 Hence, we deduce item (iii).
It remains to show that    (iii) $\Rightarrow$ (i). Assume that item
(iii) holds. Consider the noncommutative  Berezin  kernel
$K^{(m)}_{f,A,R}:\cH\to F^2(H_n)\otimes  \cH$ associated with with
the quadruple $(f,m,A,R)$ and
   defined by
\begin{equation*}
 K^{(m)}_{f,A,R}h=\sum_{\alpha\in \FF_n^+} \sqrt{b^{(m)}_\alpha}
e_\alpha\otimes   R^{1/2} A_\alpha^* h,\qquad h\in \cH.
\end{equation*}
According to Lemma \ref{lemma1} and using item (iii), we have
\begin{equation}
\label{K-norm} a\|h\|^2\leq \|K^{(m)}_{f,A,R}h\|^2=\sum_{k=0}^\infty
\left(\begin{matrix} k+m-1\\ m-1
\end{matrix}\right)
\langle\Phi_{f,A}^k(R)h,h\rangle\leq b\|h\|^2,\qquad h\in \cH.
\end{equation}
 Consequently,  the range of $ K^{(m)}_{f,A,R}$
is a closed subspace of $F^2(H_n)\otimes \cH$. On the other hand, we
showed in Lemma \ref{lemma2} that
\begin{equation*}
\text{\rm range}\,K^{(m)}_{f,A,R}\subseteq \cN_\cP\otimes \overline{
R^{1/2}(\cH)}
\end{equation*}
 and    the noncommutative
Berezin kernel  $K_{q}:\cH\to \cN_\cP\otimes \overline{
R^{1/2}(\cH)}$ associated with the  compatible tuple
$q:=(f,m,A,R,\cP)$  and
 defined by
$$K_{q}:=\left(P_{\cN_\cP}\otimes I_{\overline{ R^{1/2}(\cH)}}\right)K^{(m)}_{f,A,R},
$$
 has the property that
\begin{equation*}
K_{q}A_i^*=(B_i^*\otimes I_{\overline{ R^{1/2}(\cH)}})K_{q},\qquad
i=1,\ldots,n.
\end{equation*}
Consequently, the range of $K_{q}$ is a closed subspace  of
$\cN_\cP\otimes \cH$ and it is an invariant  subspace under each
operator $B_i^*\otimes I_\cH$, $i=1,\ldots,n$.
  Since  the operator $Y:\cH\to \text{\rm range}\,
K_{q}$ defined by $Yh:=K_{q}h$, $h\in \cH$,  is invertible, we have
\begin{equation}\label{WTW}
A_i^*= Y^{-1}[(B_i^*\otimes I_{\overline{
R^{1/2}(\cH)}})|_\cG]Y,\qquad i=1,\ldots, n,
\end{equation}
where $\cG:=\text{\rm range}\, K_{q}$.
   This proves (i).
   The proof is complete.
\end{proof}

We remark that under the conditions of Theorem \ref{pure-contr},
part (iii), we can use relations \eqref{K-norm} and \eqref{WTW} to
show that the map $\Psi:\cA_n(\cV^m_{f,\cP})\to B(\cH)$ defined by
$$\Psi(p(B_1,\ldots, B_n)):=
 p(A_1,\ldots, A_n)$$  is completely bounded  with ~$\|\Psi\|_{cb}\leq
  \sqrt{ \frac {b} {a}}$,  and $\Phi_{f,A}^k(I)\to 0$
 strongly, as $k\to \infty$.
In the particular case when $m=1$
we have a converse of the latter result.
Indeed, using  Paulsen's  similarity result \cite{Pa}
   and the fact (which can be extracted from \cite{Po-domains}) that
   any completely contractive representation of the noncommutative variety algebra
   $\cA_n(\cV^1_{f,\cP})$ is generated  by an $n$-tuple
   $(T_1,\ldots, T_n)\in \cV^1_{f,\cP}(\cH)$, we infer that
   $(A_1,\ldots, A_n)$ is simultaneously similar to an $n$-tuple
   $(T_1,\ldots, T_n)\in \cV^1_{f,\cP}(\cH)$ and $\Phi_{f,T}^k(I)\to \infty$ strongly, as $k\to\infty$.
   This proves our assertion.

Taking $R=I$ in Theorem \ref{pure-contr}, we can obtain the
following
  analogue of Rota's  model  theorem,  for  similarity to $n$-tuples of operators  in the  noncommutative
variety $\cV^m_{f,\cP}(\cH)$.

\begin{corollary}
Let $\cP$ be a set of noncommutative
 polynomials   with $\cN_\cP\neq 0$ and let $A:=(A_1,\ldots, A_n)\in B(\cH)^n$ be such that
$p(A_1,\ldots, A_n)=0,\quad p\in \cP, $ and
$$
 \sum_{k=0}^\infty \left(\begin{matrix} k+m-1\\ m-1
\end{matrix}\right)\Phi_{f,A}^k(I)\leq bI
$$
for some constant  $ b>0$.  Then, there exists an invertible
operator $Y:  \cH\to \cG$ such that
$$
A_i^*=Y^{-1}[(B_i^*\otimes I_\cH)|_\cG]Y,\qquad i=1,\ldots, n,
$$
where
 $\cG\subseteq \cN_\cP\otimes \cH$ is an invariant under  each operator $B_i^*\otimes
 I_\cH$  and $(B_1,\ldots, B_n)$ is the universal
  model associated with the noncommutative variety
 $\cV^m_{f,\cP}$.
\end{corollary}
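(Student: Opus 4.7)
The plan is to reduce the corollary directly to the implication (iii)$\Rightarrow$(i) of Theorem \ref{pure-contr}, using $R:=I$. The key observation is that the hypothesis already supplies the upper bound in condition (iii), and the lower bound is automatic.

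First I would verify the standing assumption of Theorem \ref{pure-contr}, namely that $\sum_{|\alpha|\geq 1} a_\alpha A_\alpha A_\alpha^*$ converges in the weak operator topology. Since $\binom{m}{m-1}=m\geq 1$, the hypothesis implies
\begin{equation*}
\Phi_{f,A}(I)\leq \sum_{k=0}^\infty \binom{k+m-1}{m-1}\Phi_{f,A}^k(I)\leq bI,
\end{equation*}
and because the partial sums $\sum_{1\leq|\alpha|\leq N} a_\alpha A_\alpha A_\alpha^*$ form an increasing net of positive operators bounded above by $bI$, WOT-convergence follows. The constraint $p(A_1,\ldots,A_n)=0$ for $p\in\cP$ is also part of the hypothesis, so the compatibility requirements of Theorem \ref{pure-contr} are met.

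Next, I would note that the $k=0$ term in the series is $\binom{m-1}{m-1}\Phi_{f,A}^0(I)=I$, so
\begin{equation*}
I\leq \sum_{k=0}^\infty \binom{k+m-1}{m-1}\Phi_{f,A}^k(I)\leq bI.
\end{equation*}
This is precisely condition (iii) of Theorem \ref{pure-contr} with $R:=I$ and $a:=1$. Applying the implication (iii)$\Rightarrow$(i) of that theorem yields an invertible operator $Y:\cH\to\cG$, where $\cG\subseteq \cN_\cP\otimes\overline{R^{1/2}(\cH)}=\cN_\cP\otimes\cH$ is invariant under each $B_i^*\otimes I_\cH$, such that $A_i^*=Y^{-1}[(B_i^*\otimes I_\cH)|_\cG]Y$ for $i=1,\ldots,n$.

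There is no real obstacle here; the whole point of the corollary is to package the cleanest consequence of Theorem \ref{pure-contr}, by observing that taking $R=I$ trivializes the lower bound in (iii). In fact, tracing through the proof of Theorem \ref{pure-contr}, the invertible operator $Y$ can be taken explicitly to be the constrained Berezin kernel $K_q$ associated with the compatible tuple $q=(f,m,A,I,\cP)$, and $\cG:=\operatorname{range} K_q$; but this explicit description need not be stated in the corollary itself.
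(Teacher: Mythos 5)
Your proposal is correct and follows exactly the route the paper intends: the corollary is obtained by taking $R=I$ in condition (iii) of Theorem \ref{pure-contr}, the lower bound coming for free from the $k=0$ term of the series. Your added verifications (WOT-convergence of $\sum_{|\alpha|\geq 1}a_\alpha A_\alpha A_\alpha^*$ from the bounded increasing partial sums, and the identification of $Y$ with the constrained Berezin kernel $K_q$) are consistent with the paper's argument.
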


Another consequence of Theorem \ref{pure-contr} is
 the following analogue of Foia\c s \cite{Fo} and de Branges--Rovnyak \cite{BR}
  model theorem, for pure $n$-tuples of operators in $\cV^m_{f,\cP}(\cH)$.
\begin{corollary} \label{Fo-BR}
An $n$-tuple of operators $T:=(T_1,\ldots, T_n)\in B(\cH)^n$  is in the noncommutative
 variety $ \cV_{f,\cP}^m$ and it is pure, i.e., $\Phi_{f,T}^k(I)\to 0$ strongly, as $k\to \infty$, if and only if
there exists a unitary  operator $U:
\cH\to \cG$ such that
$$
T_i^*=U^*[(B_i^*\otimes I_\cD)|_\cG]U,\qquad i=1,\ldots, n,
$$
where
 $\cD:=\overline{\left[(id-\Phi_{f,T})^m(I)\right]^{1/2}(\cH)}$, the subspace
$\cG\subseteq \cN_\cP\otimes \cH$ is  invariant   under
each operator  $B_i^*\otimes
 I_\cD$  and $(B_1,\ldots, B_n)$ is
the universal
  model associated with the noncommutative variety
 $\cV^m_{f,\cP}$.
\end{corollary}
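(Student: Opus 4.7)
The plan is to deduce both implications from the generalized Berezin kernel machinery, specializing Lemma~\ref{lemma2} (and the last part of Theorem~\ref{poisson2}) to the distinguished choice $R:=(id-\Phi_{f,T})^m(I)$.

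For the forward implication, assume $T\in\cV^m_{f,\cP}(\cH)$ is pure. Since $(id-\Phi_{f,T})^s(I)\geq 0$ for $s=1,\ldots,m$ and $\Phi_{f,T}^k(I)\to 0$ strongly, the identity operator $D=I$ lies in $C_{pure}(f,T)^+$. With $R:=(id-\Phi_{f,T})^m(I)$, the tuple $q:=(f,m,T,R,\cP)$ is compatible and $p(T_1,\ldots,T_n)=0$ for $p\in\cP$, so Lemma~\ref{lemma2} produces the constrained Berezin kernel
$$K_q:\cH\to\cN_\cP\otimes\cD,\qquad \cD:=\overline{R^{1/2}(\cH)},$$
satisfying $K_q T_i^*=(B_i^*\otimes I_\cD)K_q$ and $K_q^*K_q=\sum_{k=0}^\infty\binom{k+m-1}{m-1}\Phi_{f,T}^k(R)$. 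The key computation is then to run the telescoping identity already used in Theorem~\ref{pure-contr}: combining it with Lemma~\ref{ineq-lim} and the purity $\Phi_{f,T}^k(I)\to 0$ strongly, one obtains
$$K_q^*K_q=I-\text{SOT-}\lim_{k\to\infty}\Phi_{f,T}^k(I)=I.$$
Thus $K_q$ is an isometry; setting $\cG:=\text{range}\,K_q$ (closed because $K_q$ is isometric) and $U:=K_q$ viewed as a unitary $\cH\to\cG$, the intertwining relation forces $\cG$ to be invariant under each $B_i^*\otimes I_\cD$ and yields $T_i^*=U^*[(B_i^*\otimes I_\cD)|_\cG]U$.

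For the converse, suppose $U:\cH\to\cG\subseteq\cN_\cP\otimes\cD$ is unitary with $\cG$ invariant under each $B_i^*\otimes I_\cD$ and $T_i^*=U^*[(B_i^*\otimes I_\cD)|_\cG]U$. Set $M_i:=B_i\otimes I_\cD$ and $P:=P_\cG$. The coinvariance $M_i^*\cG\subseteq\cG$ gives $T_i^*=M_i^*|_\cG$ (identified via $U$) and, by the standard compression argument exploiting $M_i\cG^\perp\subseteq\cG^\perp$, one checks inductively that $T_\alpha=PM_\alpha|_\cG$ and hence
$$T_\alpha T_\alpha^*=PM_\alpha M_\alpha^*|_\cG,\qquad \Phi_{f,T}^k(I)=P\Phi_{f,M}^k(I)|_\cG$$
for every $k$. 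Since $(B_1,\ldots,B_n)\in\cV^m_{f,\cP}(\cN_\cP)$, the ampliations $M_i$ satisfy $(id-\Phi_{f,M})^s(I)\geq 0$ for $s=1,\ldots,m$, $p(M_1,\ldots,M_n)=0$ for $p\in\cP$, and $\Phi_{f,M}^k(I)\to 0$ strongly. Compressing these to $\cG$ via $U$ gives $(id-\Phi_{f,T})^s(I)\geq 0$, $p(T_1,\ldots,T_n)=0$ (the vanishing transfers because $p(M)^*$ is a polynomial in the $M_i^*$ and $\cG$ is invariant under all $M_i^*$), and $\Phi_{f,T}^k(I)\to 0$ strongly, so $T\in\cV^m_{f,\cP}(\cH)$ is pure.

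The only non-routine point is the isometry identity $K_q^*K_q=I$; the rest is essentially a bookkeeping application of Lemma~\ref{lemma2} together with the coinvariant-subspace calculus. I expect no substantive obstacles beyond carefully invoking Lemma~\ref{ineq-lim} in the telescoping step, since the case $A=T$, $R=(id-\Phi_{f,T})^m(I)$ is exactly the situation the Berezin kernel was designed to handle.
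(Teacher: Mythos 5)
Your proposal is correct and follows essentially the same route as the paper: the forward direction is exactly the paper's argument, namely specializing the Berezin-kernel computation of Theorem \ref{pure-contr} to $A=T$, $Q=I$, $R=(id-\Phi_{f,T})^m(I)$, using the telescoping identity and Lemma \ref{ineq-lim} together with purity to get $K_q^*K_q=I$, and then reading off the unitary $U=K_q$ and the coinvariant range from the intertwining relation of Lemma \ref{lemma2}. The only difference is that you spell out the converse (which the paper leaves implicit), and your compression argument there — multiplicativity of the compression on a coinvariant subspace, $\Phi_{f,T}^k(I)=P_\cG\,\Phi_{f,B\otimes I_\cD}^k(I)|_\cG$, and transfer of the relations $p=0$ and purity from the universal model — is sound.
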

\begin{proof} Let $T:=(T_1,\ldots, T_n)\in  \cV_{f,\cP}^m(\cH)$ be
such that $\Phi_{f,T}^k(I)\to 0$ strongly, as $k\to \infty$. A
closer look at the proof of Theorem \ref{pure-contr}, when $A=T$ and
$Q=I_\cH$, reveals that
$$
K_q^* K_q=\sum_{k=0}^\infty \left(\begin{matrix} k+m-1\\ m-1
\end{matrix}\right) \Phi_{f,A}^k(R )=I,
$$
where $R:=(id-\Phi_{f,A})^m(I)$. Consequently, $K_q$ is an isometry
and   the operator   $U:\cH\to   {K_q(\cH)}$, defined by $Uh:=K_qh$,
$h\in \cH$, is unitary. Now, one can use  relation \eqref{WTW}  to
complete the proof.
\end{proof}

Next we obtain an analogue of  Sz.-Nagy's similarity result
\cite{SzN}.

\begin{theorem}\label{simi} Let $m\geq 1$,  $f:=\sum_{|\alpha|\geq 1} a_\alpha X_\alpha$ be
a positive regular free holomorphic function and let $\cP$ be a family of
noncommutative
 polynomials. If  $A:=(A_1,\ldots,
A_n)\in B(\cH)^n$  is such that $\sum_{|\alpha|\geq 1} a_\alpha
A_\alpha A_\alpha^* $ is convergent in the weak operator topology
and $p(A_1,\ldots, A_n)=0$,\quad $p\in \cP$, then the following
statements are equivalent.
\begin{enumerate}
\item[(i)] There exist $(T_1,\ldots, T_n)\in {\cV}_{f,\cP}^m(\cH)$ such that
$ (id-\Phi_{f,T})^m(I)= 0
$
and an invertible operator $Y\in B(\cH)$ such that
$$
A_i=Y^{-1} T_i Y,\qquad i=1,\ldots,n.
$$
\item[(ii)]
There exist
 positive constants
 $0<c\leq d$ such that
 $$
 cI\leq  \Phi_{f,A}^k(I)\leq dI, \qquad k\in \NN.
 $$
 \item[(iii)] $\Phi_{f,A}$ is power bounded and
 there exists  an invertible positive operator $Q\in B(\cH)$
 such that  $(id-\Phi_{f,A})^m(Q)= 0$.
\end{enumerate}
\end{theorem}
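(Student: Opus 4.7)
The plan is to prove the cyclic chain (i) $\Rightarrow$ (ii) $\Rightarrow$ (iii) $\Rightarrow$ (i). For (i) $\Rightarrow$ (ii), the key observation is that under $T \in \cV_{f,\cP}^m(\cH)$ and $(id-\Phi_{f,T})^m(I)=0$, the sequence $\{\Phi_{f,T}^k(I)\}_{k \geq 0}$ must be identically $I$. Applying $\Phi_{f,T}^j$ to $(id-\Phi_{f,T})^m(I)=0$ yields the linear recurrence $\sum_{k=0}^m(-1)^k\binom{m}{k}\Phi_{f,T}^{j+k}(I)=0$ whose characteristic polynomial is $(1-x)^m$, so $j \mapsto \Phi_{f,T}^j(I)$ is a polynomial of degree $\leq m-1$ with self-adjoint operator coefficients; since $T \in \cV_{f,\cP}^m(\cH)$ forces $\Phi_{f,T}(I) \leq I$, the sequence is sandwiched in $[0,I]$, and evaluating against a fixed vector shows the scalar polynomial in $j$ must be constant, hence identically $\Phi_{f,T}^0(I) = I$. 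The similarity $A_i = Y^{-1} T_i Y$ then gives $\Phi_{f,A}^k(I) = Y^{-1}\Phi_{f,T}^k(YY^*)(Y^*)^{-1}$, and combining this with the bounds $c_1 I \leq YY^* \leq c_2 I$ and $\Phi_{f,T}^k(I) = I$ produces constants $0<c\leq d$ with $cI \leq \Phi_{f,A}^k(I) \leq dI$.

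For (ii) $\Rightarrow$ (iii), power boundedness is immediate, since $\|\Phi_{f,A}^k\| = \|\Phi_{f,A}^k(I)\| \leq d$. To produce the required invertible positive $Q$ with $\Phi_{f,A}(Q)=Q$ (which gives $(id-\Phi_{f,A})^m(Q)=0$), I would fix a Banach limit $L$ on $\ell^\infty(\NN)$ and define $Q$ through the sesquilinear form $(h,h') \mapsto L(\langle \Phi_{f,A}^k(I)h,h'\rangle)$; the two-sided bound in (ii) yields $cI \leq Q \leq dI$, so $Q$ is invertible positive. Translation invariance of $L$, together with a dominated-convergence argument---where $\sum_{|\alpha|\geq 1} a_\alpha\|A_\alpha^* h\|^2 = \langle\Phi_{f,A}(I)h,h\rangle \leq d\|h\|^2$ supplies the domination needed to interchange $L$ with the defining series for $\Phi_{f,A}$---gives $\Phi_{f,A}(Q)=Q$. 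An alternative route would take a WOT-cluster point of the Cesaro means $\tfrac{1}{N}\sum_{k=0}^{N-1}\Phi_{f,A}^k(I)$ and invoke the WOT-continuity of $\Phi_{f,A}$ on bounded sets from Lemma \ref{wot-cp}.

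For (iii) $\Rightarrow$ (i), power boundedness of $\Phi_{f,A}$ combined with $(id-\Phi_{f,A})^m(Q) = 0 \geq 0$ and Lemma \ref{ineq-lim}(i) gives $(id-\Phi_{f,A})^s(Q) \geq 0$ for every $s=1,\ldots,m$, so $Q \in C(f,A)^+$. Theorem \ref{pure} then supplies $T = (T_1,\ldots,T_n) \in \cV_{f,\cP}^m(\cH)$ with $A_i Q^{1/2}=Q^{1/2}T_i$; invertibility of $Q$ yields $A_i = Y^{-1}T_iY$ for $Y := Q^{-1/2}$, and the intertwining identity produces $(id-\Phi_{f,A})^m(Q) = Q^{1/2}(id-\Phi_{f,T})^m(I)Q^{1/2}$, so $(id-\Phi_{f,T})^m(I)=0$. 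The main obstacle is (ii) $\Rightarrow$ (iii): constructing an invertible positive fixed point of $\Phi_{f,A}$ inside the given norm cone. The Banach-limit (or Cesaro) construction is conceptually clean but requires careful control of the exchange between $L$ and the infinite series defining $\Phi_{f,A}$; everything else reduces to routine manipulations with the intertwining identity $A_iQ^{1/2}=Q^{1/2}T_i$ and the cited lemmas.
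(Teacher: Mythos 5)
Your proof is correct, and while it follows the same cyclic skeleton (i) $\Rightarrow$ (ii) $\Rightarrow$ (iii) $\Rightarrow$ (i) as the paper, each leg is handled by a genuinely different device. For (i) $\Rightarrow$ (ii), the paper extracts $\Phi_{f,T}(I)=I$ from the telescoping identity $\sum_{p=0}^q\binom{p+m-1}{m-1}\Phi_{f,T}^p(id-\Phi_{f,T})^m(I)=I-\sum_{j=0}^{m-1}\binom{q+j}{j}\Phi_{f,T}^{q+1}(id-\Phi_{f,T})^j(I)$ together with Lemma \ref{ineq-lim}; your finite-difference argument (the scalar sequence $j\mapsto\langle\Phi_{f,T}^j(I)h,h\rangle$ satisfies a recurrence with characteristic polynomial $(x-1)^m$, hence is a polynomial in $j$, hence constant by boundedness) is more elementary and bypasses that lemma entirely, and it even yields the stronger conclusion $\Phi_{f,T}^k(I)=I$ directly. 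For (ii) $\Rightarrow$ (iii), the paper takes a weak cluster point of the Ces\`aro means $\frac1k\sum_{j=0}^{k-1}\Phi_{f,A}^j(I)$ and uses $\|Q_{k_j}-\Phi_{f,A}(Q_{k_j})\|\to 0$ plus the WOT-continuity of $\Phi_{f,A}$ on bounded sets (Lemma \ref{wot-cp}); your primary route via a Banach limit is equally valid, with the caveat that a Banach limit is only finitely additive, so the interchange with the series defining $\Phi_{f,A}$ is not literally ``dominated convergence'' but rather uniform-in-$k$ smallness of the tails, which is exactly what your bound $\sum_{|\alpha|>N}a_\alpha\|A_\alpha^*h\|^2\to 0$ (independent of $k$, since $\|\Phi_{f,A}^k(I)\|\le d$) provides — so the step closes; your Ces\`aro alternative is verbatim the paper's argument. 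For (iii) $\Rightarrow$ (i), the paper first upgrades $(id-\Phi_{f,A})^m(Q)=0$ to the fixed-point equation $\Phi_{f,A}(Q)=Q$ (again via the telescoping identity and Lemma \ref{ineq-lim}) and then verifies by hand that $T_i:=Q^{-1/2}A_iQ^{1/2}$ satisfies $\Phi_{f,T}(I)=I$; you instead get $Q\in C(f,A)^+$ from Lemma \ref{ineq-lim}(i), invoke Theorem \ref{pure} to obtain the intertwining $A_iQ^{1/2}=Q^{1/2}T_i$ with $T\in\cV^m_{f,\cP}(\cH)$, and read off $(id-\Phi_{f,T})^m(I)=0$ from $(id-\Phi_{f,A})^m(Q)=Q^{1/2}(id-\Phi_{f,T})^m(I)Q^{1/2}$ — a slicker packaging that never needs $\Phi_{f,A}(Q)=Q$, at the cost of leaning on Theorem \ref{pure}, whose construction reduces to the same $T_i=Q^{-1/2}A_iQ^{1/2}$ when $Q$ is invertible. (The SOT- versus WOT-convergence hypotheses match, since the partial sums of $\sum a_\alpha A_\alpha A_\alpha^*$ are increasing, so the two modes of convergence coincide here.)
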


\begin{proof}
First we prove that (i) $\Leftrightarrow$ (ii). Assume item (i) holds.
Then we have
\begin{equation*}
\begin{split}
\Phi_{f,A}^k(I)&=Y^{-1} \Phi_{f,T}^k(YY^*) {Y^*}^{-1}\\
&\leq \|YY^*\| Y^{-1} \Phi_{f,T}^k(I) {Y^*}^{-1}\\
&\leq \|Y\|^2\|Y^{-1}\|^2 I
\end{split}
\end{equation*}
for any $k\in \NN$. Now, we show that $\Phi_{f,T}(I)=I$. As in the
proof of Theorem \ref{pure-contr}, we have
\begin{equation*}
\sum_{p=0}^q\left(\begin{matrix} p+m-1\\m-1
\end{matrix}\right) \Phi_{f,T}^p(id-\Phi_{f,T})^m(I)=I-\sum_{j=0}^{m-1}
 \left(\begin{matrix} q+j\\ j
\end{matrix}\right) \Phi_{f,T}^{q+1} (id-\Phi_{f,T})^j(I)
\end{equation*}
for any $q\in \NN$. Consequently, if $(id-\Phi_{f,T})^m(I)=0$,  then
$$I=\lim_{q\to \infty} \sum_{j=0}^{m-1}
 \left(\begin{matrix} q+j\\ j
\end{matrix}\right) \Phi_{f,T}^{q+1} (id-\Phi_{f,T})^j(I).
$$
Using Lemma \ref{ineq-lim}, we deduce that $I=\lim_{q\to \infty} \Phi_{f,T}^q(I)$.
Since $\Phi_{f,T}$ is a positive linear map and $\Phi_{f,T}(I)\leq I$, we have
$$
I=\lim_{q\to \infty} \Phi_{f,T}^q(I)\leq \ldots \leq \Phi_{f,T}^2(I)\leq \Phi_{f,T}(I)\leq I.
$$
Hence, we deduce that $\Phi_{f,T}(I)=I$.
Consequently, we have
\begin{equation*}
\begin{split}
I=\Phi_{f,T}^k(I)&=Y \Phi_{f,A}^k(Y^{-1}{Y^*}^{-1}) {Y^*} \\
&\leq \|Y^{-1}{Y^*}^{-1}\| \,Y  \Phi_{f,A}^k(I) Y^*
\end{split}
\end{equation*}
for any $k\in\NN$.
Hence, we deduce that
$$
Y^{-1}{Y^*}^{-1}\leq \|Y^{-1}\|^2\, \Phi_{f,A}^k(I),\qquad k\in\NN,
$$
 which implies
$$
\Phi_{f,A}^k(I)\geq \frac{1}{\|Y^{-1}\|^2} Y^{-1} {Y^*}^{-1}\geq
\frac{1}{\|Y\|^2 \|Y^{-1}\|^2}I.
$$
Therefore, we have proved that
$$
\frac{1}{\|Y\|^2 \|Y^{-1}\|^2}I\leq \Phi_{f,A}^k(I) \leq
\|Y\|^2\|Y^{-1}\|^2 I,\qquad k\in\NN.
$$
Therefore, item (ii) holds.
We prove now  the implication  (ii) $\Rightarrow$ (iii).
 Assume  that item (ii) holds.
For each $k\geq 1$, we define the operator
$$
Q_k:=  \frac {1} {k} \sum_{j=0}^{k-1} \Phi_{f,A}^j(I)
$$
and note that  $cI\leq Q_k\leq dI$.
Since the closed unit ball of $B(\cH)$ is weakly compact, there is a subsequence
  $\{Q_{k_j}\}_{j=1}^\infty$
 weakly
 convergent   to an operator $Q\in B(\cH)$.
 It is clear that $Q$ is an invertible positive
 operator and
 $aI\leq Q\leq bI$.
 Since
 $$Q_{k_j}-\Phi_{f,A}(Q_{k_j})= \frac {1} {k_j} I- \frac {1} {k_j} \Phi_{f,A}^{k_j}(I)
 $$
 and taking into account  that $\frac {1} {k_j} \Phi_{f,A}^{k_j}(I)\to 0$ in norm as $j\to\infty$ , we get $\|Q_{k_j}-\Phi_{f,A}(Q_{k_j})\|\to 0$,
  as $j\to\infty$.
 On the other hand, according to Lemma \ref{wot-cp}, $\Phi_{f,A}$ is WOT-continuous on bounded sets.
  Now, using the fact that $Q_{k_j}$ converges weakly to $Q$, we deduce that $\Phi_{f,A}(Q)= Q$, which implies  $(id-\Phi_{f,A})^m(Q)= 0$ and shows that
    item  (iii) holds.

  It remains to show that (iii) $\Rightarrow $ (i). Assume that $\Phi_{f,A}$ is power bounded and
 there exists  an invertible positive operator $Q\in B(\cH)$
 such that such that  $(id-\Phi_{f,A})^m(Q)= 0$.
 Since
\begin{equation*}
\sum_{p=0}^q\left(\begin{matrix} p+m-1\\m-1
\end{matrix}\right) \Phi_{f,A}^p(id-\Phi_{f,A})^m(Q)=Q-\sum_{j=0}^{m-1}
 \left(\begin{matrix} q+j\\ j
\end{matrix}\right) \Phi_{f,A}^{q+1} (id-\Phi_{f,A})^j(Q)
\end{equation*}
for any $q\in \NN$, we deduce that
$$Q=\lim_{q\to \infty} \sum_{j=0}^{m-1}
 \left(\begin{matrix} q+j\\ j
\end{matrix}\right) \Phi_{f,A}^{q+1} (id-\Phi_{f,A})^j(Q).
$$
Using Lemma \ref{ineq-lim}, we deduce that $Q=\lim_{q\to \infty} \Phi_{f,A}^q(Q)$.

On the other hand, since
 $\Phi_{f,A}$ is a power bounded positive linear map with $(id-\Phi_{f,A})^m(Q)\geq 0$,
 we can use Lemma  \ref{wot-cp} to deduce that
 $(id-\Phi_{f,A})^s(Q)\geq 0$ for  any $s=1,\ldots, m$.
 In paticular, we have
  $\Phi_{f,A}(Q)\leq Q$. Using the results above, we have
$$
Q=\lim_{q\to \infty} \Phi_{f,A}^q(Q)\leq \ldots \leq
\Phi_{f,A}^2(Q)\leq \Phi_{f,A}(Q)\leq Q.
$$
Hence, we deduce that $\Phi_{f,A}(Q)=Q$.
 Set $T_i:=Q^{-1/2} A_i Q^{1/2}$ for  $i=1,\ldots,n$ and note
  that
  \begin{equation*}
  \begin{split}
\sum_{|\alpha|\geq 1} a_\alpha T_\alpha T_\alpha^*
&=Q^{-1/2}\left(\sum_{|\alpha|\geq 1} a_\alpha A_\alpha
QA_\alpha^*\right)Q^{-1/2}\\
&=Q^{-1/2} Q Q^{-1/2}=I,
  \end{split}
  \end{equation*}
which implies item (i). The proof is complete.
\end{proof}

Now, we can obtain  a noncommutative multivariable analogue  of
Douglas' similarity result \cite{Do}.
\begin{corollary}\label{inve} If  $A:=(A_1,\ldots, A_n)\in B(\cH)^n$  satisfies the conditions of Theorem \ref{simi} and
\begin{equation*}
\Phi_{f,A}^\infty(I):= \text{\rm SOT-}\lim_{k\to \infty}
\Phi_{f,A}^k(I)
\end{equation*}
exists, then
 the following statements
are equivalent:
\begin{enumerate}
\item[(i)]   $\Phi_{f,A}^\infty(I)$ is invertible;
\item[(ii)] there exist $(T_1,\ldots, T_n)\in {\cV}_{f,\cP}^m(\cH)$ such that
$ (id-\Phi_{f,T})^m(I)= 0
$
and an invertible operator $Y\in B(\cH)$ such that
$$
A_i=Y^{-1} T_i Y,\qquad i=1,\ldots,n.
$$
\end{enumerate}
In the particular case when $\Phi_{f,A}(I)\leq I$,  the limit
$\text{\rm SOT-}\lim_{k\to \infty} \Phi_{f,A}^k(I)$   always exists.
\end{corollary}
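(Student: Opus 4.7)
The plan is to reduce both equivalences to Theorem \ref{simi}, bridging the two via the fact that the SOT-limit $Q_\infty:=\Phi_{f,A}^\infty(I)$, whenever it exists, is automatically a fixed point of $\Phi_{f,A}$. The key technical tool is Lemma \ref{wot-cp}, which guarantees that $\Phi_{f,A}$ is WOT-continuous on bounded sets. Since the sequence $\{\Phi_{f,A}^k(I)\}$ is norm-bounded (any SOT-convergent sequence is), and since SOT-convergence implies WOT-convergence, applying $\Phi_{f,A}$ to the identity $\Phi_{f,A}^k(I)\to Q_\infty$ and noting $\Phi_{f,A}^{k+1}(I)\to Q_\infty$ as well gives $\Phi_{f,A}(Q_\infty)=Q_\infty$, hence $(id-\Phi_{f,A})^m(Q_\infty)=0$.

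For the direction (i)$\Rightarrow$(ii), I would take $Q:=Q_\infty$; by hypothesis it is invertible and positive. The fixed-point identity above yields $(id-\Phi_{f,A})^m(Q)=0$. Power boundedness of $\Phi_{f,A}$ follows from the fact that for the positive linear map $\Phi_{f,A}$ one has $\|\Phi_{f,A}^k\|=\|\Phi_{f,A}^k(I)\|$ (standard for positive maps on $B(\cH)$), and the norms $\|\Phi_{f,A}^k(I)\|$ are bounded uniformly by the uniform boundedness principle applied to the SOT-convergent sequence. Hence condition (iii) of Theorem \ref{simi} is verified, and that theorem delivers the required $T\in \cV_{f,\cP}^m(\cH)$ with $(id-\Phi_{f,T})^m(I)=0$ and $A_i=Y^{-1}T_iY$.

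For the converse (ii)$\Rightarrow$(i), apply the (i)$\Rightarrow$(ii) direction of Theorem \ref{simi} to obtain constants $0<c\le d$ with $cI\le \Phi_{f,A}^k(I)\le dI$ for every $k\in\NN$. Passing to the SOT-limit preserves the lower bound (weak limits of positive operators bounded below are bounded below), so $Q_\infty\geq cI$, which is exactly invertibility.

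Finally, for the last assertion, if $\Phi_{f,A}(I)\leq I$ then applying the positive map $\Phi_{f,A}^k$ to both sides gives $\Phi_{f,A}^{k+1}(I)\leq \Phi_{f,A}^k(I)$, so $\{\Phi_{f,A}^k(I)\}$ is a decreasing sequence of positive operators bounded below by $0$; such sequences converge in the strong operator topology, which guarantees the existence of $\Phi_{f,A}^\infty(I)$. I do not expect any serious obstacle here; the one point that needs a bit of care is the invocation of WOT-continuity on bounded sets (Lemma \ref{wot-cp}) to pass from $\Phi_{f,A}^k(I)\to Q_\infty$ weakly to $\Phi_{f,A}(Q_\infty)=Q_\infty$, but that is precisely what the lemma provides.
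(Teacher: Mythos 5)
Your proof is correct and follows essentially the same route as the paper: Lemma \ref{wot-cp} gives $\Phi_{f,A}(\Phi_{f,A}^\infty(I))=\Phi_{f,A}^\infty(I)$, then Theorem \ref{simi} is invoked in each direction, and the final assertion follows from monotonicity of $\{\Phi_{f,A}^k(I)\}$. The only difference is that you spell out the power-boundedness of $\Phi_{f,A}$ (via uniform boundedness and $\|\Phi_{f,A}^k\|$ being controlled by $\|\Phi_{f,A}^k(I)\|$ for positive maps), a point the paper leaves implicit.
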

\begin{proof} Assume that item (i) holds.
Since  $\Phi_{f,A}$ is  WOT-continuous on bounded sets (see Lemma
\ref{wot-cp}) and the limit $ \text{\rm SOT-}\lim_{k\to \infty}
\Phi_{f,A}^k(I)$ exists, we have
$\Phi_{f,A}(\Phi_{f,A}^\infty(I))=\Phi_{f,A}^\infty(I)$.
 Taking into account that
$\Phi_{f,A}^\infty(I)$ is invertible,   item (ii) follows from
Theorem \ref{simi}. Conversely,  assume that  item (ii) holds.  Then
Theorem \ref{simi}, implies $cI\leq \Phi_{f,A}^k(I)\leq dI$ for any
$k\in\NN$. Hence,  the operator $\Phi_{f,A}^\infty(I)$ is
invertible, and the proof is complete.
\end{proof}

Given  $A,B\in B(\cH)$ two self-adjoint  operators, we say that
$A<B$ if $B-A$ is positive and invertible, i.e., there exists a
constant $\gamma>0$ such that $\left<(B-A)h,h\right>\geq
\gamma\|h\|^2$ for any $h\in \cH$. Note that  $C\in B(\cH)$ is a
strict contraction ($\|C\|<1$) if and only if $C^*C<I$.

A version of Rota's model theorem (see \cite{R}, \cite{H1}) asserts
that any operator with spectral radius less than
 one is similar to a strict contraction. In what follows we present an
 analogue of this result  in our multivariable noncommutative
 setting.

\begin{theorem} \label{simi2}
 Let $m\geq 1$,  $f :=\sum_{|\alpha|\geq 1} a_\alpha X_\alpha$ be
a positive regular free holomorphic function and let $\cP$ be a family of
noncommutative
 polynomials. If  $A:=(A_1,\ldots,
A_n)\in B(\cH)^n$  is such that $\sum_{|\alpha|\geq 1} a_\alpha
A_\alpha A_\alpha^* $ is convergent in the weak operator topology
and $p(A_1,\ldots, A_n)=0$,\quad $p\in \cP$, then the following
statements are equivalent.
\begin{enumerate}
\item[(i)]
  There exist $T:=(T_1,\ldots, T_n) \in {\cV}_{f,\cP}^m(\cH)$ with
 $(id-\Phi_{f,T})^m(I)>0$
 and an invertible operator $Y\in B(\cH)$ such that
$$
A_i=Y^{-1} T_i Y,\qquad i=1,\ldots,n.
$$
 \item[(ii)] $\Phi_{f,A}$ is power bounded and there exists a positive operator $Q\in B(\cH)$
  such that
 $$(id-\Phi_{f,A})^m(Q)>0.
 $$
\item[(iii)] $r_f(A_1,\ldots,A_n)<1$.
  \item[(iv)] $\lim\limits_{k\to \infty} \|\Phi_{f,A}^k(I)\|=0$.
\item[(v)] $\Phi_{f,A}$ is power bounded and  there is an
 invertible positive operator $R\in B(\cH)$, such that
  the equation
  \begin{equation*}
  (id-\Phi_{f,A})^m(X)=R
  \end{equation*}
  has a positive  solution  $X$ in $B(\cH)$.
\end{enumerate}
 Moreover, in this case, for any
 invertible positive operator $R\in B(\cH)$,
  the equation
  $
  (id-\Phi_{f,A})^m(X)=R
  $
  has a
  unique  positive solution, namely,
  $$
  X:=\sum_{k=0}^\infty \left(\begin{matrix} k+m-1\\ m-1
\end{matrix}\right)\Phi_{f,A}^k(R),
  $$
  where the convergence is in the uniform topology, which is an invertible operator.
\end{theorem}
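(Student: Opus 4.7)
The plan is to close a cycle of equivalences: (i) $\Leftrightarrow$ (ii), (i) $\Rightarrow$ (iv), (iv) $\Leftrightarrow$ (iii), and (iii) $\Rightarrow$ (v) $\Rightarrow$ (ii), with the uniqueness and explicit formula in the moreover part arising from the construction in (iii) $\Rightarrow$ (v). For (i) $\Rightarrow$ (ii), given $A_i = Y^{-1}T_iY$ with $T$ as in (i), I set $Q := (Y^*Y)^{-1}$ and verify by induction that $\Phi_{f,A}^k(Q) = Y^{-1}\Phi_{f,T}^k(I)(Y^*)^{-1}$; this immediately yields $(id-\Phi_{f,A})^s(Q) = Y^{-1}(id-\Phi_{f,T})^s(I)(Y^*)^{-1}$, which is positive for $s \leq m$ and bounded below by $\epsilon/\|Y\|^2$ at $s = m$, while power boundedness of $\Phi_{f,A}$ follows from $\Phi_{f,T}(I) \leq I$. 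For the converse (ii) $\Rightarrow$ (i), the key step is to show $Q$ is automatically invertible: since $\Phi_{f,A}$ is power bounded, Lemma \ref{ineq-lim}(i) yields $(id-\Phi_{f,A})^s(Q) \geq 0$ for every $s \leq m$, so $\Phi_{f,A}(Q) \leq Q$ and $\Phi_{f,A}^k(Q)$ decreases SOT to some $L$ with $\Phi_{f,A}(L) = L$; the partial-sum identity from the proof of Theorem \ref{simi}, combined with Lemma \ref{ineq-lim}(ii), gives $\sum_{k\geq 0}\binom{k+m-1}{m-1}\Phi_{f,A}^k(R) = Q - L$ (in WOT) with $R := (id-\Phi_{f,A})^m(Q) \geq \epsilon I$, whence $Q \geq \epsilon I + L \geq \epsilon I$. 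Setting $T_i := Q^{-1/2}A_iQ^{1/2}$, the identity $\Phi_{f,T}^k(I) = Q^{-1/2}\Phi_{f,A}^k(Q)Q^{-1/2}$ shows $T \in \cV^m_{f,\cP}(\cH)$ with $(id-\Phi_{f,T})^m(I) > 0$, while $p(T_1,\ldots,T_n) = Q^{-1/2}p(A_1,\ldots,A_n)Q^{1/2} = 0$ for $p \in \cP$.

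The main obstacle is (i) $\Rightarrow$ (iv). I plan to prove the stronger fact $\|\Phi_{f,T}(I)\| < 1$ whenever $T \in \cV^m_{f,\cP}(\cH)$ with $(id-\Phi_{f,T})^m(I) \geq \epsilon I$. The case $m = 1$ is immediate since $(id-\Phi_{f,T})(I) \geq \epsilon I$ forces $\Phi_{f,T}(I) \leq (1-\epsilon)I$. For $m \geq 2$ I argue by contradiction: suppose $\|\Phi_{f,T}(I)\| = 1$ and choose unit vectors $h_k$ with $\langle\Phi_{f,T}(I)h_k, h_k\rangle \to 1$. Since $m \geq 2$, Lemma \ref{ineq-lim}(i) gives $(id-\Phi_{f,T})^2(I) \geq 0$, and applying the positive map $\Phi_{f,T}^{j-2}$ yields $\Phi_{f,T}^j(I) \geq 2\Phi_{f,T}^{j-1}(I) - \Phi_{f,T}^{j-2}(I)$ for every $j \geq 2$. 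Combined with the decreasing upper bound $\Phi_{f,T}^j(I) \leq I$, an induction on $j$ shows $\langle\Phi_{f,T}^j(I)h_k, h_k\rangle \to 1$ for every $j \geq 0$. Substituting into the binomial expansion $(id-\Phi_{f,T})^m(I) = \sum_{j=0}^m(-1)^j\binom{m}{j}\Phi_{f,T}^j(I)$ forces $\langle(id-\Phi_{f,T})^m(I)h_k, h_k\rangle \to (1-1)^m = 0$, contradicting the lower bound $\epsilon > 0$. Hence $\|\Phi_{f,T}(I)\| < 1$, so $\|\Phi_{f,T}^k(I)\| \leq \|\Phi_{f,T}(I)\|^k \to 0$, and the similarity estimate $\|\Phi_{f,A}^k(I)\| \leq \|Y^{-1}\|^2\|YY^*\|\cdot\|\Phi_{f,T}^k(I)\|$ transfers the decay to $A$, proving (iv).

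The equivalence (iii) $\Leftrightarrow$ (iv) is Fekete's subadditivity lemma applied to $\log\|\Phi_{f,A}^k(I)\|$: complete positivity of $\Phi_{f,A}$ gives $\|\Phi_{f,A}^k\| = \|\Phi_{f,A}^k(I)\|$ and hence $\|\Phi_{f,A}^{j+k}(I)\| \leq \|\Phi_{f,A}^j(I)\|\cdot\|\Phi_{f,A}^k(I)\|$, so $r_f(A)^2 = \inf_k\|\Phi_{f,A}^k(I)\|^{1/k}$, whence both directions follow at once.

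Finally, for (iii) $\Rightarrow$ (v) together with the moreover claim, fix $\rho$ with $r_f(A) < \rho < 1$, so $\|\Phi_{f,A}^k(I)\| \leq C\rho^{2k}$; for any $R \in B(\cH)$, $\binom{k+m-1}{m-1}\|\Phi_{f,A}^k(R)\| \leq C\binom{k+m-1}{m-1}\rho^{2k}\|R\|$ is summable, so $X := \sum_{k\geq 0}\binom{k+m-1}{m-1}\Phi_{f,A}^k(R)$ converges in the uniform topology. The formal identity $(1-z)^m\sum_k\binom{k+m-1}{m-1}z^k = 1$, or equivalently the partial-sum telescoping of Theorem \ref{simi}, yields $(id-\Phi_{f,A})^m(X) = R$; for $R$ positive invertible, $X \geq R > 0$ is a positive invertible solution. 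For uniqueness, if $X_1, X_2$ are both positive solutions then $Z := X_1 - X_2$ satisfies $(id-\Phi_{f,A})^m(Z) = 0$, and the partial-sum identity applied to $Z$ yields $Z = \sum_{j=0}^{m-1}\binom{q+j}{j}\Phi_{f,A}^{q+1}(id-\Phi_{f,A})^j(Z)$ for every $q$; the bound $\|\Phi_{f,A}^{q+1}\| \leq C\rho^{2(q+1)}$, with $\binom{q+j}{j}$ growing only polynomially, forces $\|Z\| \to 0$ and hence $Z = 0$. The implication (v) $\Rightarrow$ (ii) is immediate by taking $Q := X$, closing the cycle.
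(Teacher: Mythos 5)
Your proof is correct, and its overall skeleton is the same as the paper's: the cycle (i)$\Leftrightarrow$(ii), (i)$\Rightarrow$(iii)/(iv), (iii)$\Leftrightarrow$(iv), (iii)$\Rightarrow$(v)$\Rightarrow$(ii), with the moreover part coming from the norm-convergent series $\sum_k\binom{k+m-1}{m-1}\Phi_{f,A}^k(R)$ and the telescoping identity. Two local steps are done differently, and in both cases more laboriously than necessary. For the invertibility of $Q$ in (ii)$\Rightarrow$(i) and for the key estimate $\|\Phi_{f,T}(I)\|<1$ in (i)$\Rightarrow$(iv), the paper simply observes that, once Lemma \ref{ineq-lim} gives $(id-\Phi)^s(D)\geq 0$ for $s\leq m$, positivity of $\Phi$ yields the monotone chain $0<(id-\Phi)^m(D)\leq\cdots\leq(id-\Phi)(D)\leq D$; applied to $D=Q$ this gives $Q\geq (id-\Phi_{f,A})^m(Q)\geq\epsilon I$ at once, and applied to $D=I$ (for $T$) it gives $I-\Phi_{f,T}(I)\geq(id-\Phi_{f,T})^m(I)\geq\epsilon I$, i.e.\ $\|\Phi_{f,T}(I)\|\leq 1-\epsilon$, which is then transferred through the similarity exactly as you do (the paper phrases this via $r_f(A)=r_f(T)\leq\|\Phi_{f,T}(I)\|^{1/2}<1$). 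Your substitutes — the summation identity $\sum_k\binom{k+m-1}{m-1}\Phi_{f,A}^k(R)=Q-L\geq\epsilon I$, and the approximate-eigenvector contradiction using $(id-\Phi_{f,T})^2(I)\geq 0$ and the second-difference recursion — are both valid, just longer; note that your contradiction argument genuinely needs $m\geq 2$ for the $s=2$ positivity, which you correctly single out. Your (iii)$\Leftrightarrow$(iv) via submultiplicativity of $k\mapsto\|\Phi_{f,A}^k(I)\|$ is essentially the paper's computation, and your uniqueness argument (difference of two solutions plus the partial-sum identity and the geometric decay of $\|\Phi_{f,A}^{q+1}\|$) is a harmless variant of the paper's, which instead shows any positive solution equals the series. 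One cosmetic omission: in (iii)$\Rightarrow$(v) you should record that power boundedness of $\Phi_{f,A}$ (required in the statement of (v)) follows from $\|\Phi_{f,A}^k\|=\|\Phi_{f,A}^k(I)\|\leq C\rho^{2k}$; this is immediate with the tools you already use.
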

\begin{proof} First we prove the equivalence  (i) $\Leftrightarrow$ (ii).
Assume that (i) holds and   $(id-\Phi_{f,T})^m(I)\geq cI$ for some $c>0$. Then we have
$$
Y\left[(id-\Phi_{f,A})^m(Y^{-1} (Y^{-1})^*)\right] Y^*\geq cI.
$$
Setting $Q:=Y^{-1} (Y^{-1})^*$ we deduce that $(id-\Phi_{f,A})^m(Q)>0$. The fact
 that $\Phi_{f,A}$ is power bounded is due to Proposition \ref{properties}.

Conversely, assume that  item (ii) holds and let $Q\in B(\cH)$ be  a positive
operator  such that $(id-\Phi_{f,A})^m(Q)>0$. Since $\Phi_{f,A}$ is power bounded,
Lemma \ref{ineq-lim} implies $(id-\Phi_{f,A})^s(Q)\geq 0$, $s=1,\ldots,m$. On the other hand, since $\Phi_{f,A}$ is a positive linear map, we deduce that
$$
0<(id-\Phi_{f,A})^m(Q)\leq \cdots \leq (id-\Phi_{f,A})(Q)\leq Q.
$$
Therefore, $Q$ is an invertible positive operator.
 Since
\begin{equation*}
(id-\Phi_{f,A})^m(Q)\geq b I
\end{equation*}
 for some constant $b>0$, we can choose $c>0$ such that $bI\geq cQ$, and  deduce that
 $$Q^{-1/2}[(id-\Phi_{f,A})^m(Q)]Q^{-1/2}\geq cI.$$
 Setting $T_i:=Q^{-1/2} A_iQ^{1/2}$, $i=1,\ldots,n$, the latter inequality implies
 $(id-\Phi_{f,T})^m(I)>0$. Since $\Phi_{f,A}$ is power bounded, so is $\Phi_{f,T}$. As above, using again Lemma \ref{ineq-lim} we obtain $(id-\Phi_{f,T})^s(I)>0$,
 $s=1,2,\ldots,m$, which shows that $T\in {\bf D}_f^m(\cH)$.
 Since  $p(A_1,\ldots, A_n)=0$,\quad $p\in \cP$, we deduce that $T\in \cV^m_{f,\cP}(\cH)$.
Therefore, item (i) holds.

Now we prove the  equivalence (iii) $\Leftrightarrow$ (iv).
Assume that item  (iii) holds and let $a>0$ be such that  $ r( A_1,\ldots,
A_n)< a<1$. Then there is $m_0\in \NN$ such that $\|\Phi_{f,A}^k(I)\|\leq
a^k$ for any $k\geq m_0$. This clearly implies condition (iv).  Now, we  assume that (iv) holds.
 Note that
\begin{equation*}
\begin{split}
r_f( A_1,\ldots, A_n)^j&=\lim_{k\to
\infty}\left[\|\Phi_{f,A}^{jk}(I)\|^{1/2kj}\right]^j\\
&=\lim_{k\to \infty}
\|\Phi_{f,A}^{j(k-1)}(\Phi_{f,A}^j(I))\|^{1/2k}\\
&\leq \lim_{k\to
\infty}\left(\|\Phi_{f,A}^j(I)\|^k\right)^{1/2k}=\|\Phi_{f,A}^j(I)\|^{1/2}
\end{split}
\end{equation*}
 for any $j\in \NN$. Consequently,  $r_f( A_1,\ldots, A_n)<1$, so item (iii) holds.
The implication (v) $\Rightarrow$ (ii) is obvious. In what follows
we prove that
  (i) $\Rightarrow$ (iii).
  Assume that  there exists $T:=(T_1,\ldots, T_n) \in {\cV}_{f,\cP}^m(\cH)$ with
 $(id-\Phi_{f,T})^m(I)>0$
 and an invertible operator $Y\in B(\cH)$ such that
$$
A_i=Y^{-1} T_i Y,\qquad i=1,\ldots,n.
$$
Recall that under these conditions we have, in particular,  $\|\Phi_{f,A}(I)\|<1$.
  On the other hand, note that
\begin{equation*}
\begin{split} r_f(T_1,\ldots, T_n)&=
r_f(YA_1Y^{-1},\ldots,YA_nY^{-1})\\
&=\lim_{k\to\infty}\|\Phi_{f,YAY^{-1}}^k(I)\|^{1/2k}\\
&\leq \lim_{k\to\infty}\|Y\|^{1/k}\|\Phi_{f,A}^k(I)\|^{1/2k}\\
&=r_f(A_1,\ldots,A_n ).
\end{split}
\end{equation*}
  Hence, applying this
  inequality when $Y $ is replaced by its inverse,
  we  deduce that
\begin{equation*}
\begin{split}
r_f(A_1,\ldots,A_n
)&=r_f\left(Y^{-1}(YA_1Y^{-1})Y,\ldots,Y^{-1}(YA_nY^{-1})Y\right)\\
&\leq
r_f(YA_1Y^{-1},\ldots,YA_nY^{-1})=r_f(T_1,\ldots, T_n).
\end{split}
\end{equation*}
Therefore, we have
\begin{equation*}
\begin{split}
r_f(A_1,\ldots,A_n)&=r_f(T_1,\ldots, T_n) \\
& =\lim_{k\to\infty} \|\Phi_{f,T}^k(I)\|^{1/2k}\leq
\|\Phi_{f,T}(I)\|^{1/2}<1,
\end{split}
\end{equation*}
which shows that   item (iii) holds. Now,  we prove the implication
(iii)
  $\Rightarrow$ (v). To this end,  assume that $r_f(A_1,\ldots,A_n)<1$ and let $R\in B(\cH)$ be  an invertible positive operator.
   We have
    $$
    \frac {1} {\|R^{-1}\|} \, I \leq R\leq \sum_{k=0}^\infty\left(\begin{matrix} k+m-1\\ m-1
\end{matrix}\right) \Phi_{f,A}^k(R)
    \leq \left(\|R\| \sum_{k=0}^\infty \left(\begin{matrix} k+m-1\\ m-1
\end{matrix}\right)\|\Phi_{f,A}^k(I)\|\right)\, I.
    $$
 Note that
 $$
 \lim_{k\to\infty} \left[\left(\begin{matrix} k+m-1\\ m-1
\end{matrix}\right)\|\Phi_{f,A}^k(I)\|\right]^{1/2k}=r_f(T_1,\ldots,T_n)<1.
$$
Consequently,
   \begin{equation}\label{ab}
 aI \leq  \sum_{k=0}^\infty \left(\begin{matrix} k+m-1\\ m-1
\end{matrix}\right)\Phi_{f,A}^k(R) \leq bI
 \end{equation}
 for some  constants  $0<a<b$, where the convergence of the series is in the operator norm topology.
 Now, we can prove that
 $$
 (id-\Phi_{f,A})^m\left[ \sum_{k=0}^\infty \left(\begin{matrix} k+m-1\\ m-1
\end{matrix}\right)\Phi_{f,A}^k(R) \right]=R.
 $$
 Indeed, since $(id-\Phi_{f,A}) \Phi_{f,A}=\Phi_{f,A}(id-\Phi_{f,A})$, we can use Lemma \ref{ineq-lim} and the fact that
 $$0\leq \lim_{k\to \infty} \|\Phi_{f,A}^k(R)\|\leq \|R\| \lim_{k\to \infty} \|\Phi_{f,A}^k(I)\|=0$$
  to obtain
 \begin{equation*}\begin{split}
 (id-\Phi_{f,A})^m\left[ \sum_{k=0}^\infty \left(\begin{matrix} k+m-1\\ m-1
\end{matrix}\right)\Phi_{f,A}^k(R) \right]&=
 \sum_{k=0}^\infty \left(\begin{matrix} k+m-1\\ m-1
\end{matrix}\right) \Phi_{f,A}^k(id-\Phi_{f,A})^m(R )\\
&=
R- \text{\rm SOT-}\lim_{k\to \infty}\sum_{i=0}^{m-1}\left(\begin{matrix} k+i\\ i
\end{matrix}\right)\Phi_{f,A}^{k+1}(id-\Phi_{f,A})^i(R)\\
&= R-\text{\rm SOT-}\lim_{k\to \infty}  \Phi_{f,A}^k(R)=R
\end{split}
\end{equation*}
 Consequently, and due to  relation \eqref{ab},
  $$
  X:=\sum_{k=0}^\infty \left(\begin{matrix} k+m-1\\ m-1
\end{matrix}\right)\Phi_{f,A}^k(R)
  $$
  is an invertible positive solution of the equation  $(id-\Phi_{f,A})^m(X)=R$.
 Therefore item (v) holds.

   To prove the last part of the theorem,
   let $X'\geq 0$ be an invertible operator such
   $(id-\Phi_{f,A})^m(X')=R$, where $R\geq 0$ is a fixed   arbitrary invertible operator. Then, as above,  we have
   \begin{equation*}\begin{split}
  \sum_{k=0}^\infty \left(\begin{matrix} k+m-1\\ m-1
\end{matrix}\right)\Phi_{f,A}^k(R) &=
 \sum_{k=0}^\infty \left(\begin{matrix} k+m-1\\ m-1
\end{matrix}\right) \Phi_{f,A}^k(id-\Phi_{f,A})^m(X' )\\
&= X'-\text{\rm SOT-}\lim_{k\to \infty}  \Phi_{f,A}^k(X')=X'.
\end{split}
\end{equation*}

Here we used that $\|\Phi_{f,A}^k(X')\|\leq \|X'\| \|\Phi_{f,A}^k(I)\|\to 0,$
   as $k\to \infty$.
   Therefore  there is  unique  positive solution
   of the inequality $(id-\Phi_{f,A})^m(X)=R$.
   The proof is complete.
\end{proof}

Now we  can obtain the following
multivariable generalization of Rota's  similarity result  (see Paulsen's book \cite{Pa-book}).

\begin{corollary} \label{rota}
Under the hypotheses of Theorem \ref{simi2}, if the joint spectral
radius $r_f(A_1,\ldots,A_n)<1$, then the $n$-tuple
$$T:=(P^{-1/2} A_1
P^{1/2},\ldots, P^{-1/2} A_n P^{1/2})$$
   is in the noncommutative variety $\cV_{f,\cP}^m(\cH)$
and  $(id-\Phi_{f,T})^m(I)>0$, where
$$P:=\sum_{k=0}^\infty \left(\begin{matrix} k+m-1\\
m-1
\end{matrix}\right)\Phi_{f,A}^k(I)$$
 is convergent
in the operator  norm topology  and
 $$
 \|P^{1/2}\|\|P^{-1/2}\|\leq \left(\sum_{k=0}^\infty
\left(\begin{matrix} k+m-1\\ m-1
\end{matrix}\right)\|\Phi_{f,A}^k(I)\|\right)^{1/2}.
$$
In particular, if  $f$ is a positive regular noncommutative
polynomial, then $P$ is in the $C^*$-algebra generated by
$A_1,\ldots, A_n$ and the identity.
\end{corollary}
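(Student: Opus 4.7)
The plan is to apply Theorem \ref{simi2} with $R = I$. Since $r_f(A_1,\ldots,A_n) < 1$, the equivalence of (iii) and (v) in that theorem together with its final paragraph shows at once that
$$
P := \sum_{k=0}^\infty \binom{k+m-1}{m-1}\Phi_{f,A}^k(I)
$$
converges in the operator norm topology to a positive invertible solution of $(id - \Phi_{f,A})^m(P) = I$. The condition $r_f(A_1,\ldots,A_n) < 1$ also implies $\|\Phi_{f,A}^k(I)\| \to 0$, so $\Phi_{f,A}$ is power bounded, and Lemma \ref{ineq-lim}(i) gives $(id-\Phi_{f,A})^s(P) \geq 0$ for every $s = 1, \ldots, m$.

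Second, I would mimic the construction in the proof of (ii)$\Rightarrow$(i) of Theorem \ref{simi2} and set $T_i := P^{-1/2} A_i P^{1/2}$. Since $T_\alpha = P^{-1/2} A_\alpha P^{1/2}$ for every $\alpha \in \FF_n^+$ (the internal $P^{\pm 1/2}$ factors telescope), a one-line induction on $k$ yields
$$
\Phi_{f,T}^k(I) = P^{-1/2}\Phi_{f,A}^k(P) P^{-1/2},
$$
and consequently $(id - \Phi_{f,T})^s(I) = P^{-1/2}(id - \Phi_{f,A})^s(P) P^{-1/2}$ for $s = 1, \ldots, m$. The right-hand side is positive by the previous step, and for $s = m$ it equals $P^{-1}$, which is strictly positive. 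Hence $T \in {\bf D}_f^m(\cH)$ with $(id - \Phi_{f,T})^m(I) > 0$, and the identity $p(T_1,\ldots,T_n) = P^{-1/2} p(A_1,\ldots,A_n) P^{1/2} = 0$ for each $p \in \cP$ places $T$ inside $\cV_{f,\cP}^m(\cH)$.

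For the norm estimate, I would observe that $P \geq I$ (the $k = 0$ summand is $I$ and the remaining summands are positive), so $\|P^{-1/2}\| \leq 1$; combined with $\|P^{1/2}\|^2 = \|P\| \leq \sum_{k=0}^\infty \binom{k+m-1}{m-1}\|\Phi_{f,A}^k(I)\|$, this gives the claimed bound. Finally, when $f$ is a polynomial, each $\Phi_{f,A}^k(I)$ lies in the $C^*$-algebra generated by $A_1,\ldots,A_n$ and $I$, so the norm convergence of the series places $P$ in that same $C^*$-algebra. I do not foresee any genuine obstacle: the entire corollary is a direct specialization of Theorem \ref{simi2}, and the only computation of substance is the telescoping identity for $\Phi_{f,T}^k(I)$, which is routine.
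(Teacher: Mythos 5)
Your argument is correct and is essentially the paper's own proof: the paper simply notes that the series converges in norm with $I\leq P\leq\bigl(\sum_{k}\binom{k+m-1}{m-1}\|\Phi_{f,A}^k(I)\|\bigr)I$ (giving the bound on $\|P^{1/2}\|\|P^{-1/2}\|$) and then refers to the proof of Theorem \ref{simi2} with $R=I$, where $T_i:=P^{-1/2}A_iP^{1/2}$ is exactly the similarity used in (ii)$\Rightarrow$(i). Your telescoping identity $\Phi_{f,T}^k(I)=P^{-1/2}\Phi_{f,A}^k(P)P^{-1/2}$ just makes explicit what that proof yields, so no genuinely different route or gap is involved.
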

\begin{proof} Since
$$
 \lim_{k\to\infty} \left[\left(\begin{matrix} k+m-1\\ m-1
\end{matrix}\right)\|\Phi_{f,A}^k(I)\|\right]^{1/2k}=r_f(T_1,\ldots,T_n)<1,
$$ the series  $\sum_{k=0}^\infty
\left(\begin{matrix} k+m-1\\ m-1
\end{matrix}\right)\|\Phi_{f,A}^k(I)\|$  is convergent and  we have
$$I\leq P\leq\sum_{k=0}^\infty
\left(\begin{matrix} k+m-1\\ m-1
\end{matrix}\right)\|\Phi_{f,A}^k(I)\|,
$$
which implies the  upper bound estimation  for
$\|P^{1/2}\|\|P^{-1/2}\|$. A closer look at the proof of  Theorem
\ref{simi2} and taking  $R=I$ leads to the desired result. The last
part of this corollary is now obvious.
\end{proof}

Using  Theorem \ref{simi2} and Theorem \ref{pure-contr}, we deduce
the following result.

\begin{corollary} \label{simi3}
 Let $A:=(A_1,\ldots, A_n)\in B(\cH)^n$ be under the hypotheses of Theorem
 \ref{simi2}. Then the following statements hold.
 \begin{enumerate}
 \item[(i)] If \, $r_f(A_1,\ldots,A_n)=0$, then, for any
$\epsilon>0$,  $(A_1,\ldots, A_n)$ is jointly similar to an
$n$-tuple of operators $(T_1,\ldots, T_n)\in \epsilon \cV_{f,\cP}^m
(\cH)$.
 \item[(ii)]
 If there exist
 positive constants
 $0<a\leq b$   and a  positive operator
  $R\in B(\cH)$ such that
 \begin{equation*}
 aI \leq  \sum_{k=0}^\infty \left(\begin{matrix} k+m-1\\ m-1
\end{matrix}\right)\Phi_{f,A}^k(R) \leq bI,
 \end{equation*}
 then
   $(A_1,\ldots, A_n)$ is jointly similar to an
$n$-tuple of operators $(T_1,\ldots, T_n)\in \cV_{f,\cP}^m(\cH)$.
If, in addition, $R$ is invertible, then $(id-\Phi_{f,T})^m(I)>0$.
\end{enumerate}
\end{corollary}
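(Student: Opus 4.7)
The hypothesis is precisely condition (iii) of Theorem~\ref{pure-contr}, so Theorem~\ref{pure-contr}(i) gives an invertible $Y:\cH\to\cG$ with $\cG\subseteq\cN_\cP\otimes\cH$ invariant under every $B_i^*\otimes I_\cH$, and $A_i^*=Y^{-1}[(B_i^*\otimes I_\cH)|_\cG]Y$. Put $\bar T_i:=P_\cG(B_i\otimes I_\cH)|_\cG$ on $\cG$. Since $\cG^\perp$ is invariant under each $B_i\otimes I_\cH$, one has the multiplicative law $\bar T_\alpha=P_\cG(B_\alpha\otimes I_\cH)|_\cG$ and the compression formula $\bar T_\alpha\bar T_\alpha^*=P_\cG(B_\alpha B_\alpha^*\otimes I_\cH)|_\cG$ for every $\alpha\in\FF_n^+$. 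Summing with coefficients $a_\alpha$ yields
\[
(id-\Phi_{f,\bar T})^s(I_\cG)=P_\cG\bigl[(id-\Phi_{f,B})^s(I_{\cN_\cP})\otimes I_\cH\bigr]|_\cG\ge 0,\qquad s=1,\dots,m,
\]
and $p(\bar T)=P_\cG(p(B)\otimes I_\cH)|_\cG=0$ for each $p\in\cP$, so $\bar T\in\cV^m_{f,\cP}(\cG)$. Taking adjoints of the similarity and identifying $\cG$ with $\cH$ via a unitary produces the required $T\in\cV^m_{f,\cP}(\cH)$ jointly similar to $A$.

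\textbf{Invertibility refinement in (ii).} When $R$ is invertible, the hypothesis provides $aI\le K_q^*K_q\le bI$, so $K_q:\cH\to\cG=\mathrm{range}\,K_q$ is bounded above and below. The universal-model identity $(id-\Phi_{f,W})^m(I)=P_\CC$ descends to $(id-\Phi_{f,B})^m(I_{\cN_\cP})=P_\CC$, hence $(id-\Phi_{f,\bar T})^m(I_\cG)=P_\cG(P_\CC\otimes I_\cH)|_\cG$. From the explicit formula \eqref{Po-ker}, only the $\alpha=g_0$ term survives $(P_\CC\otimes I_\cH)$, giving $\langle(P_\CC\otimes I_\cH)K_qh,K_qh\rangle=\langle Rh,h\rangle$ for every $h\in\cH$; combining with invertibility of $R$ and the two-sided bound on $K_q$ produces a constant $\kappa>0$ with $\langle P_\cG(P_\CC\otimes I_\cH)k,k\rangle\ge\kappa\|k\|^2$ for all $k\in\cG$, which is the claimed strict positivity.

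\textbf{Part (i).} Fix $\epsilon>0$. The plan is to verify $r_f(A/\epsilon)=0$ and then apply Corollary~\ref{rota} to $A/\epsilon$. The inequality $\mu^{|\beta|}A_\beta A_\beta^*\le\Phi_{f,A}^{|\beta|}(I)$ with $\mu:=\min_i a_{g_i}>0$, combined with $r_f(A)=0$, forces the joint operator radius $\rho(A):=\limsup_k\max_{|\alpha|=k}\|A_\alpha\|^{1/k}$ to vanish; consequently, for any prescribed $\delta>0$ one has $\|A_\beta\|\le\delta^{|\beta|}$ for all $|\beta|$ sufficiently large. The positive regularity of $f$ gives $a_\alpha\le C^{|\alpha|}$ and therefore $b_\beta^{(p)}\le\binom{|\beta|-1}{p-1}C^{|\beta|}\le(2C)^{|\beta|}$. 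These two ingredients combine to give
\[
\|\Phi_{f,A/\epsilon}^p(I)\|\le\sum_{|\beta|\ge p}b_\beta^{(p)}\epsilon^{-2|\beta|}\|A_\beta\|^2\le\sum_{N\ge p}\bigl(2nC(\delta/\epsilon)^2\bigr)^N
\]
for $p$ large, and choosing $\delta<\epsilon/\sqrt{2nC}$ makes the right side geometric, so $r_f(A/\epsilon)=0$. Assuming the polynomials in $\cP$ are homogeneous (so that $p(A)=0$ implies $p(A/\epsilon)=\epsilon^{-\deg p}p(A)=0$), Corollary~\ref{rota} applies to $A/\epsilon$ and yields an invertible $Y_\epsilon$ and $T_0\in\cV^m_{f,\cP}(\cH)$ with $(id-\Phi_{f,T_0})^m(I)>0$ and $A_i/\epsilon=Y_\epsilon^{-1}T_{0,i}Y_\epsilon$; setting $T_i:=\epsilon T_{0,i}$ gives $A_i=Y_\epsilon^{-1}T_iY_\epsilon$ with $T\in\epsilon\cV^m_{f,\cP}(\cH)$. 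The main obstacle is exactly the estimate above: for general positive regular free holomorphic $f$ the map $c\mapsto\Phi_{f,cA}$ is not a simple rescaling of $\Phi_{f,A}$, so one really needs to combine both the regularity bound on the coefficients $a_\alpha$ and the decay of $\|A_\beta\|$ coming from $\rho(A)=0$ in order to control $r_f(A/\epsilon)$.
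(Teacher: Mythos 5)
Your argument is correct in substance and follows the deduction the paper intends --- part (ii) from Theorem \ref{pure-contr}, whose condition (iii) is exactly your hypothesis, and part (i) from Theorem \ref{simi2}/Corollary \ref{rota} applied to the rescaled tuple $(\frac{1}{\epsilon}A_1,\ldots,\frac{1}{\epsilon}A_n)$ --- but you deviate in two places worth recording. First, in (i) you actually prove that $r_f(\frac{1}{\epsilon}A_1,\ldots,\frac{1}{\epsilon}A_n)<1$: since $f$ is not homogeneous, $\Phi_{f,cA}$ is not a rescaling of $\Phi_{f,A}$, and your combination of $\mu^{|\beta|}A_\beta A_\beta^*\leq \Phi_{f,A}^{|\beta|}(I)$ (so $r_f(A)=0$ forces superexponential decay of $\|A_\beta\|$) with the geometric coefficient bound coming from positive regularity is precisely what legitimizes the rescaling; note it uses $r_f(A)=0$ and would not follow from $r_f(A)<1$ alone. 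The paper treats this step as immediate, so your estimate is a genuine and correct addition (it also yields the needed WOT-convergence of $\sum a_\alpha \epsilon^{-2|\alpha|}A_\alpha A_\alpha^*$, the $p=1$ case of your bound). Second, for the strict positivity in (ii) the paper's natural route is Theorem \ref{simi2}(v): the hypothesis makes $X:=\sum_{k\geq 0}\binom{k+m-1}{m-1}\Phi_{f,A}^k(R)$ an invertible positive solution of $(id-\Phi_{f,A})^m(X)=R$, and conjugating by $X^{1/2}$ lands directly in $\cV^m_{f,\cP}(\cH)$, avoiding the unitary identification of $\cG$ with $\cH$ that your compression argument requires; your alternative --- pushing the defect identity $(id-\Phi_{f,W})^m(I)=P_\CC$ through the Berezin kernel --- also works and even gives the explicit bound $(id-\Phi_{f,\bar T})^m(I_\cG)\geq (b\|R^{-1}\|)^{-1}I_\cG$.

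Two caveats. Strictly, $(id-\Phi_{f,B})^m(I_{\cN_\cP})=P_{\cN_\cP}P_\CC|_{\cN_\cP}$, which is $P_\CC$ only when $1\in\cN_\cP$; this does not affect you, because $\mathrm{range}\,K_q\subseteq \cN_\cP\otimes\overline{R^{1/2}(\cH)}$, so the quantity you actually compute, $\langle (P_\CC\otimes I)K_qh,K_qh\rangle=\langle Rh,h\rangle$, is the right one (likewise your $b^{(p)}_\beta$ is the coefficient of $A_\beta A_\beta^*$ in $\Phi_{f,A}^p(I)$, not the paper's $b^{(m)}_\beta$ of \eqref{b-al}, but your bound is correct for that quantity). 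Finally, your homogeneity assumption on $\cP$ in (i) is a real issue but one with the statement rather than with your write-up: the corollary inherits from Theorem \ref{simi2} only that $\cP$ is a family of noncommutative polynomials, yet the conclusion $T\in\epsilon\,\cV^m_{f,\cP}(\cH)$ forces $p(\frac{1}{\epsilon}A_1,\ldots,\frac{1}{\epsilon}A_n)=0$ for all $p\in\cP$, which does not follow from $p(A_1,\ldots,A_n)=0$ unless the polynomials are homogeneous (as they are assumed to be in Theorem \ref{poisson}); so you have surfaced an implicit hypothesis rather than left a gap of your own.
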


The next result provides necessary and sufficient conditions for an $n$-tuple of operators
 be  similar
 to an $n$-tuple in the noncommutative variety $\cV^m_{f,\cP}$,
 $m\geq 1$.

\begin{theorem} \label{simi4} Let $m\geq 1$,  $f :=\sum_{|\alpha|\geq 1} a_\alpha X_\alpha$ be
a positive regular free holomorphic function and let $\cP$ be a family of
noncommutative
 polynomials. If  $A:=(A_1,\ldots,
A_n)\in B(\cH)^n$  is such that $\sum_{|\alpha|\geq 1} a_\alpha
A_\alpha A_\alpha^* $ is convergent in the weak operator topology
and $p(A_1,\ldots, A_n)=0$,\quad $p\in \cP$, then the following
statements are equivalent.
\begin{enumerate}
\item[(i)]
  There exist  an $n$-tuple $(T_1,\ldots, T_n)\in \cV^m_{f,\cP}(\cH)$
   and  an
invertible operator $Y\in B(\cH)$ such that
$$
A_i=Y^{-1} T_i Y,\qquad i=1,\ldots,n.
$$
\item[(ii)] $\Phi_{f,A}$ is power bounded and there is an invertible
positive operator $R\in B(\cH)$ such that $$(id-\Phi_{f,A})^m(R)\geq
0.
$$
\end{enumerate}
If, in addition, $m=1$ and   $\cP$ is a set of homogeneous
polynomials, then the statements above are equivalent to the
following:
\begin{enumerate}
\item[(iii)] the map $\Psi:\cA_n(\cV^1_{f,\cP})\to B(\cH)$ defined by
$$\Psi(p(B_1,\ldots, B_n)):=
 p(A_1,\ldots, A_n)$$  is completely bounded, where $\cA_n(\cV^1_{f,\cP})$ is the noncommutative
 variety  algebra.
\end{enumerate}
\end{theorem}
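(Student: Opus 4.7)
My plan is to prove (i) $\Leftrightarrow$ (ii) in full generality by reducing the similarity question to the cone $C(f,A)^+$ already characterized in Theorem~\ref{pure}, and to prove (i) $\Leftrightarrow$ (iii) under the additional hypothesis ($m=1$, $\cP$ homogeneous) by combining the von Neumann estimate of Corollary~\ref{VN} with Paulsen's similarity theorem \cite{Pa} and the universal property of $\cA_n(\cV^1_{f,\cP})$ invoked in the remark following Theorem~\ref{pure-contr}.

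For (i) $\Rightarrow$ (ii), suppose $A_i = Y^{-1} T_i Y$ with $T \in \cV^m_{f,\cP}(\cH)$. A direct computation shows
$$
\Phi_{f,A}(Y^{-1} X (Y^{-1})^*) = Y^{-1}\,\Phi_{f,T}(X)\,(Y^{-1})^*, \qquad X \in B(\cH),
$$
and iterating yields $\Phi_{f,A}^k(Y^{-1} X (Y^{-1})^*) = Y^{-1}\Phi_{f,T}^k(X)(Y^{-1})^*$ for every $k \in \NN$. Expanding $(id-\Phi_{f,A})^m$ as a binomial sum of powers of $\Phi_{f,A}$ and setting $X = I$, I obtain
$$
(id-\Phi_{f,A})^m\bigl(Y^{-1}(Y^{-1})^*\bigr) = Y^{-1}\bigl((id-\Phi_{f,T})^m(I)\bigr)(Y^{-1})^* \geq 0,
$$
so $R := Y^{-1}(Y^{-1})^* = (Y^*Y)^{-1}$ is the desired invertible positive operator; power boundedness of $\Phi_{f,A}$ is already recorded in Proposition~\ref{properties}.

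For (ii) $\Rightarrow$ (i), the power boundedness of $\Phi_{f,A}$ together with $(id-\Phi_{f,A})^m(R) \geq 0$ allows me to apply Lemma~\ref{ineq-lim}(i), which upgrades the inequality to $(id-\Phi_{f,A})^s(R) \geq 0$ for every $s = 1, \ldots, m$. Thus $R \in C(f,A)^+$, and Theorem~\ref{pure} produces $T \in \cV^m_{f,\cP}(\cH)$ with $A_i R^{1/2} = R^{1/2} T_i$. Invertibility of $R$ rewrites this as $A_i = Y^{-1} T_i Y$ with $Y := R^{-1/2}$.

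For the additional equivalence when $m = 1$ and $\cP$ consists of homogeneous polynomials, the direction (i) $\Rightarrow$ (iii) follows from Corollary~\ref{VN} applied to $D = I \in C_{rad}(f,T)^+$ (recall $C(f,T)^+ = C_{rad}(f,T)^+$ automatically when $m = 1$): this yields a completely contractive polynomial calculus $p(B) \mapsto p(T)$, and composing with the conjugation $p(T) \mapsto Y^{-1} p(T) Y = p(A)$ gives $\|\Psi\|_{cb} \leq \|Y\|\,\|Y^{-1}\|$. The converse (iii) $\Rightarrow$ (i) is the most delicate step: Paulsen's theorem \cite{Pa} provides an invertible $S \in B(\cH)$ such that $\Psi_S(\cdot) := S \Psi(\cdot) S^{-1}$ is a unital completely contractive representation of $\cA_n(\cV^1_{f,\cP})$; the universal property of this algebra, used exactly as in the remark after Theorem~\ref{pure-contr}, guarantees that $\Psi_S$ is implemented by an $n$-tuple $T := (S A_1 S^{-1}, \ldots, S A_n S^{-1}) \in \cV^1_{f,\cP}(\cH)$, whence $A_i = S^{-1} T_i S$ and (i) holds with $Y := S$. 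The only conceptual obstacle is ensuring that the Paulsen similitude actually produces an $n$-tuple landing in the variety rather than merely in the row ball; this is supplied precisely by the cited lifting result from \cite{Po-domains}.
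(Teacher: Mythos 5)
Your argument is correct and follows essentially the same route as the paper: the equivalence (i) $\Leftrightarrow$ (ii) rests on the conjugation identity $\Phi_{f,A}^k(Y^{-1}X(Y^{-1})^*)=Y^{-1}\Phi_{f,T}^k(X)(Y^{-1})^*$ together with Proposition \ref{properties} and Lemma \ref{ineq-lim}, and the case $m=1$ is handled exactly as in the paper via the von Neumann inequality (Corollary \ref{VN}) in one direction and Paulsen's similarity theorem plus the representation fact for $\cA_n(\cV^1_{f,\cP})$ from \cite{Po-domains} in the other. The only cosmetic difference is that for (ii) $\Rightarrow$ (i) you invoke Theorem \ref{pure} to produce $T$ with $A_iR^{1/2}=R^{1/2}T_i$, whereas the paper (through its reference to Theorem \ref{simi2}) directly sets $T_i:=R^{-1/2}A_iR^{1/2}$ and checks membership in the variety, which is the same construction.
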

\begin{proof}
The proof of the  equivalence (i) $\Leftrightarrow$ (ii) is similar
to the proof of the same equivalence from Theorem \ref{simi2}.
Consider the case $m=1$. If item (i) holds, then
   $$
   p(A_1,\ldots, A_n)= Yp(T_1,\ldots, T_n)Y^{-1}
   $$
   for any noncommutative polynomial $p$.
    Using the noncommutative von Neumann inequality  for $\cV^1_{f,\cP}(\cH)$,
    we  deduce $
   \|\Psi\|_{cb}\leq\|Y\|\|Y^{-1}\|.
   $
   Now, if we assume that item (iii) holds, then using  Paulsen's  similarity result \cite{Pa}
   and the fact (see \cite{Po-domains}) that
   any completely contractive representation of the noncommutative variety algebra
   $\cA_n(\cV^1_{f,\cP})$ is generated  by an $n$-tuple
   $(T_1,\ldots, T_n)\in \cV^1_{f,\cP}(\cH)$, we infer that
   $(A_1,\ldots, A_n)$ is simultaneously similar to an $n$-tuple
   $(T_1,\ldots, T_n)\in \cV^1_{f,\cP}(\cH)$.
   The proof is complete.
\end{proof}

\bigskip

\section{ Joint invariant subspaces  and triangulations for $n$-tuples of  operators
  in  noncommutative varieties}

In this section,    we obtain Wold type decompositions  and  prove the existence of  triangulations of type
$$
\left(\begin{matrix}C_{\cdot 0}&0\\
*& C_{\cdot 1}\end{matrix} \right)\quad \text{ and }\quad \left(\begin{matrix}C_{c}&0\\
*& C_{cnc}\end{matrix} \right)
$$
for any $n$-tuple  of operators in the noncommutative variety
${\cV}^1_{f,\cP}(\cH)$. As consequences,  we   show that certain
classes of $n$-tuples  of  operators in ${\cV}^1_{f,\cP}(\cH)$  have
non-trivial joint invariant subspaces.

\begin{theorem} \label{inv-ker} Let  $f:=\sum_{|\alpha|\geq 1} a_\alpha X_\alpha$
 be   a positive regular  free holomorphic function and $m\geq 1$.
Let $A:=(A_1,\ldots, A_n)\in B(\cH)^n$ be such that
$\sum_{|\alpha|\geq 1} a_\alpha A_\alpha A_\alpha^* $ is convergent
in the weak operator topology  and $\Phi_{f,A}$ is power bounded. If
$D\in B(\cH)$ be a positive operator such that $(id-
\Phi_{f,A})^m(D)\geq 0$. Then the subspaces
$$\ker D, \quad \{h\in \cH: \ \lim_{k\to\infty}\Phi_{f,A}^k(D) h=0\},\quad \text{ and
} \quad  \{h\in \cH: \ \Phi_{f,A}^k(D)h=Dh \ \text{ for all }\
k\in \NN\}
$$
are invariant under each operator $A_i^*$, $i=1,\ldots,n$.

In particular, if $\cM$ is a subspace of $\cH$ and $(id-
\Phi_{f,A})^m(P_\cM)\geq 0$, where $P_\cM$ is the orthogonal
projection onto $\cM$,  then $\cM$ is invariant under each operator
$A_i$.
\end{theorem}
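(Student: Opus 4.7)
The plan is to reduce all three assertions to a single squeeze lemma: \emph{if $E\in B(\cH)$ is positive and $\Phi_{f,A}(E)\le E$, then $\ker E$ is invariant under each $A_i^*$.} For $h\in\ker E$ one has $\langle Eh,h\rangle=0$, so
$$0\le \langle\Phi_{f,A}(E)h,h\rangle \le \langle Eh,h\rangle = 0;$$
unpacking, this reads $\sum_{|\alpha|\ge 1} a_\alpha \|E^{1/2}A_\alpha^* h\|^2 = 0$, and since $a_{g_i}>0$ for every $i$ this forces $E^{1/2}A_i^* h = 0$, hence $EA_i^* h = 0$.

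To apply the lemma I first use Lemma \ref{ineq-lim}(i) together with the power boundedness of $\Phi_{f,A}$ to pass from $(id-\Phi_{f,A})^m(D)\ge 0$ to $\Phi_{f,A}(D)\le D$; this immediately handles the $\ker D$ case. For the other two subspaces I would appeal to the canonical decomposition of Theorem \ref{decomp}, which writes $D=B+C$ with $B=\text{SOT-}\lim_k \Phi_{f,A}^k(D)$ a fixed point ($\Phi_{f,A}(B)=B$) and $C\ge 0$ satisfying $\Phi_{f,A}^k(C)\to 0$ strongly. A short computation gives $\Phi_{f,A}(C)=\Phi_{f,A}(D)-B\le D-B=C$, so both $B$ and $C$ satisfy the lemma's hypothesis.

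It then remains to identify the last two subspaces as $\ker B$ and $\ker C$ respectively. The first identification is immediate from the strong convergence $\Phi_{f,A}^k(D)h\to Bh$. For the second, decompose
$$\Phi_{f,A}^k(D)h = Bh+\Phi_{f,A}^k(C)h$$
using $\Phi_{f,A}^k(B)=B$. If $Ch=0$, iterating $\Phi_{f,A}(C)\le C$ gives $\Phi_{f,A}^k(C)\le C$ for every $k$, so the squeeze $0\le\langle\Phi_{f,A}^k(C)h,h\rangle\le\langle Ch,h\rangle=0$ forces $\Phi_{f,A}^k(C)h=0$; thus $\Phi_{f,A}^k(D)h=Bh=Dh$. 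Conversely, letting $k\to\infty$ in $\Phi_{f,A}^k(D)h=Dh$ yields $Bh=Dh$, i.e.\ $Ch=0$.

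The final assertion about $\cM$ with $(id-\Phi_{f,A})^m(P_\cM)\ge 0$ is then the $\ker D$ case applied to $D=P_\cM$: once $\cM^\perp=\ker P_\cM$ is invariant under each $A_i^*$, taking orthogonal complements makes $\cM$ invariant under each $A_i$. The only step that requires genuine care is the set-theoretic identification of the third subspace with $\ker C$, for which one must iterate the order inequality $\Phi_{f,A}(C)\le C$ to propagate the vanishing $\Phi_{f,A}^k(C)h=0$ to every $k$; apart from that, the argument is a straightforward combination of the squeeze lemma and the canonical decomposition of $D$.
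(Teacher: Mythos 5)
Your proposal is correct and follows essentially the same route as the paper: the kernel squeeze argument using $a_{g_i}>0$, the passage from $(id-\Phi_{f,A})^m(D)\geq 0$ to $\Phi_{f,A}(D)\leq D$ via Lemma \ref{ineq-lim} and power boundedness, and the canonical decomposition $D=B+C$ of Theorem \ref{decomp} with the identifications of the second and third subspaces as $\ker B$ and $\ker C$. The only detail worth recording is the (trivial) observation that $B=\text{SOT-}\lim_k\Phi_{f,A}^k(D)\geq 0$, so your lemma indeed applies to it.
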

\begin{proof}

 Due  to Lemma \ref{ineq-lim}, we  have   $\Phi_{f, A}(D)\leq D$. Consequently, for any $h\in \ker D$,
$$
0\leq \sum_{|\alpha|\geq 1} \langle a_\alpha A_\alpha DA_\alpha^* h,
h\rangle \leq \langle Dh,h\rangle=0.
$$
Hence, $\|a_{g_i}D^{1/2} A_i^* h\|=0$ for  $i=1,\ldots, n$. Since
$a_{g_i}\neq 0$, we deduce that   $A_i^* h\in \ker D$. Therefore,
$\ker D$ is invariant under each operator $A_i^*$.
 Now, let
  $D= B+C$ be the canonical decomposition of $D$
 with respect to
$\Phi_{f,A}$.
 According to Theorem \ref{decomp}, we have
 $$
 C\geq 0,\quad (id-\Phi_{f,A})^m(C)\geq 0, \quad  \text{\rm
SOT-}\lim_{k\to \infty}  \Phi_{f,A}^k (C)=0,
$$
and
$$
B=\text{\rm SOT-}\lim_{k\to\infty} \Phi_{f,A}^k(D), \quad
\Phi_{f,A}(B)=B.
$$
Now, due to the first part of this theorem, applied to $B$ and $C$,
respectively,  the subspaces $\ker B$ and $\ker C $ are
 invariant under each  each  operator $A_i^*$, $i=1,\ldots, n$.
  Note that
 $$
 \ker B= \{h\in \cH: \  \lim_{k\to\infty} \Phi_{f,A}^k(D)h=0\}.
 $$
 Since $\Phi_{f,A}(D)\leq D$, it is easy to see that
 $$
 \ker C=\{ h\in \cH:\  \lim_{k\to\infty} \Phi_{f,A}^k(D)h= Dh\}
 =\{ h\in \cH:\  \Phi_{f,A}^k(D)h= Dh, \ k\in \NN\}.
 $$
 Taking $D:=P_\cM$, we obtain the last part of the theorem.
 The proof is complete.
\end{proof}

  An interesting  consequence of Proposition \ref{inv-ker} is the following.

 \begin{corollary}\label{inv2} Let $T:=(T_1,\ldots, T_n)\in
 {\bf D}^m_{f}(\cH)$  be such that $(id-\Phi_{f,T})^m(I)=I$ and let $\cM\subseteq \cH$ be a subspace.
   Then the following statements hold.
   \begin{enumerate}
   \item[(i)]
$\cM$ is   an invariant subspace  under each  operator  $T_i$, \
$i=1,\ldots, n$,  if and only if \,$\Phi_{f,T}(P_\cM)\leq P_\cM$.
\item[(ii)]
$\cM$ is reducing
 under each  operator $T_i$, \
$i=1,\ldots,n$, if and only if \,$\Phi_{f,T}(P_\cM)= P_\cM$.
\end{enumerate}
 \end{corollary}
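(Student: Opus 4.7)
The plan is to reduce both parts to Theorem~\ref{inv-ker} with $D=P_\cM$, using that $\Phi_{f,T}$ is power bounded on $B(\cH)$: indeed, $T\in{\bf D}^m_f(\cH)$ forces $\Phi_{f,T}(I)\leq I$, and positivity of $\Phi_{f,T}$ gives $\|\Phi_{f,T}^k\|=\|\Phi_{f,T}^k(I)\|\leq 1$. Part (ii) will then follow from (i) combined with the unital identity $\Phi_{f,T}(I)=I$ that I intend to extract from the standing hypothesis.

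For (i), the direction $\Phi_{f,T}(P_\cM)\leq P_\cM\Rightarrow\cM$ invariant is an immediate consequence of Theorem~\ref{inv-ker}: by Lemma~\ref{ineq-lim}(i) applied to the power bounded map $\Phi_{f,T}$ and the self-adjoint operator $D=P_\cM$, the inequality $(id-\Phi_{f,T})(P_\cM)\geq 0$ upgrades to $(id-\Phi_{f,T})^s(P_\cM)\geq 0$ for every $s=1,\ldots,m$, so the ``In particular'' clause of Theorem~\ref{inv-ker} gives $T_i^*$-invariance of $\ker P_\cM=\cM^\perp$, equivalently $T_i$-invariance of $\cM$. For the converse, $T_i\cM\subseteq\cM$ for each $i$ propagates to $T_\alpha^*\cM^\perp\subseteq\cM^\perp$ for all $\alpha\in\FF_n^+$, so that $P_\cM T_\alpha^*|_{\cM^\perp}=0$ and consequently $\Phi_{f,T}(P_\cM)$ annihilates $\cM^\perp$; on $\cM$ itself I would simply estimate
\[
\langle \Phi_{f,T}(P_\cM)h,h\rangle
=\sum_{|\alpha|\geq 1}a_\alpha\|P_\cM T_\alpha^* h\|^2
\leq\langle \Phi_{f,T}(I)h,h\rangle\leq\|h\|^2,
\]
which combines with the vanishing on $\cM^\perp$ to yield $\Phi_{f,T}(P_\cM)\leq P_\cM$.

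For (ii), once $\Phi_{f,T}(I)=I$ has been secured, both implications are routine. For $(\Leftarrow)$: $\Phi_{f,T}(P_\cM)=P_\cM$ gives $\Phi_{f,T}(P_{\cM^\perp})=\Phi_{f,T}(I)-P_\cM=P_{\cM^\perp}$, and (i) applied to each projection delivers $T_i$-invariance of both $\cM$ and $\cM^\perp$, hence the reducing property. For $(\Rightarrow)$: $\cM$ reducing produces, via (i), the two inequalities $\Phi_{f,T}(P_\cM)\leq P_\cM$ and $\Phi_{f,T}(P_{\cM^\perp})\leq P_{\cM^\perp}$; their sum is $\Phi_{f,T}(I)\leq I$, and the unital identity forces equality in each summand, giving $\Phi_{f,T}(P_\cM)=P_\cM$.

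The only non-routine point is the passage from the hypothesis $(id-\Phi_{f,T})^m(I)=I$ to the unital relation $\Phi_{f,T}(I)=I$; this is where the precise form of the standing assumption enters the argument and is the step that will require the most care. Once it is settled, (i) and (ii) reduce to the applications of Theorem~\ref{inv-ker} and Lemma~\ref{ineq-lim} sketched above.
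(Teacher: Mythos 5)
Two genuine gaps. The first, and the one that actually blocks the proof, is the unital identity: you correctly observe that everything in (ii) (and the paper's own route to the converse of (i)) hinges on $\Phi_{f,T}(I)=I$, but you never derive it — you explicitly defer it as "the step that will require the most care." That step is the whole point where the hypothesis enters, and the paper supplies it by the computation already carried out in the proof of Theorem \ref{simi}: reading the standing assumption as $(id-\Phi_{f,T})^m(I)=0$ (the "$=I$" in the statement is a misprint; taken literally with $m=1$ it would force $\Phi_{f,T}(I)=0$, i.e.\ $T_1=\cdots=T_n=0$, and the identity $\Phi_{f,T}(I)=I$ would be false), one uses the telescoping identity $\sum_{p=0}^{q}\binom{p+m-1}{m-1}\Phi_{f,T}^p\bigl((id-\Phi_{f,T})^m(I)\bigr)=I-\sum_{j=0}^{m-1}\binom{q+j}{j}\Phi_{f,T}^{q+1}(id-\Phi_{f,T})^j(I)$ together with Lemma \ref{ineq-lim}(ii) to conclude $\lim_{q}\Phi_{f,T}^q(I)=I$, and then the monotone chain $\Phi_{f,T}^q(I)\le\cdots\le\Phi_{f,T}(I)\le I$ forces $\Phi_{f,T}(I)=I$. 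Without this, part (ii) of your argument is not a proof.

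The second gap is your use of Lemma \ref{ineq-lim}(i) in the forward direction of (i): the lemma states that the single top-order condition $(id-\varphi)^m(D)\ge 0$ is equivalent to the full string $(id-\varphi)^s(D)\ge 0$, $s=1,\dots,m$; it does \emph{not} promote the first-order inequality $(id-\Phi_{f,T})(P_\cM)\ge 0$ to higher orders, and that promotion is false in general (take $\varphi(X)=TXT^*$ with $T$ the nilpotent $2\times 2$ Jordan-cell contraction: $\varphi$ is power bounded and $\varphi(I)\le I$, yet $(id-\varphi)^2(I)$ has a negative eigenvalue). So you cannot invoke the "in particular" clause of Theorem \ref{inv-ker}, whose hypothesis is $(id-\Phi_{f,T})^m(P_\cM)\ge 0$. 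The repair is immediate and is exactly what the proof of Theorem \ref{inv-ker} really uses: $\Phi_{f,T}(P_\cM)\le P_\cM$ alone gives, for $h\in\cM^\perp$, $0\le\sum_{|\alpha|\ge 1}a_\alpha\|P_\cM T_\alpha^*h\|^2\le\langle P_\cM h,h\rangle=0$, hence $T_i^*h\in\cM^\perp$ and $\cM$ is $T_i$-invariant. Your converse of (i), by contrast, is correct and is a genuinely different (and slightly more elementary) route than the paper's: you only use $\Phi_{f,T}(I)\le I$ and the vanishing of $\Phi_{f,T}(P_\cM)$ on $\cM^\perp$, whereas the paper deduces $\Phi_{f,T}(P_{\cM}^\perp)\ge P_{\cM}^\perp$ from the unital identity and the commutation of $\Phi_{f,T}(P_{\cM}^\perp)$ with $P_{\cM}^\perp$, then subtracts from $\Phi_{f,T}(I)=I$. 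Your treatment of (ii) matches the paper once the unital identity is in hand.
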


\begin{proof}  Due to Theorem
\ref{inv-ker}, if $\Phi_{f,T}(P_\cM)\leq P_\cM$, then the subspace $
\cM$ is invariant under under each operator $T_i$, \ $i=1,\ldots,
n$. Conversely, assume $\cM$ is invariant under under each $T_i$, \
$i=1,\ldots, n$. Then $P_\cM^\perp T_i P_\cM^\perp=P_\cM^\perp T_i$,
where $P_\cM^\perp:=I-P_\cM$. As seen before in this paper, the
condition $(id-\Phi_{f,T})^m(I)=1$ implies $\Phi_{f,T}(I)=I$.
Consequently, we have
$$
\Phi_{f,T}(P_\cM^\perp)P_\cM^\perp=\Phi_{f,T}(I)P_\cM^\perp=P_\cM^\perp
=P_\cM^\perp\Phi_{f,T}(I)P_\cM^\perp=P_\cM^\perp\Phi_{f,T}(P_\cM^\perp)P_\cM^\perp.
$$
Since the operators $\Phi_{f,T}(P_\cM^\perp)$ and $I-P_\cM^\perp$
are positive and commuting, we have
$$
\Phi_{f,T}(P_\cM^\perp)-P_\cM^\perp=\Phi_{f,T}(P_\cM^\perp)(I-P_\cM^\perp)\geq
0.
$$
Consequently, $\Phi_{f,T}(P_\cM^\perp)\geq P_\cM^\perp$. Since
$\Phi_{f,T}(I)=I$, we infer that $\Phi_{f,T}(P_\cM)\leq P_\cM$.
Therefore, item (i) holds. To prove (ii), note that due to  part
(i), $\cM$ is reducing
 under each operator $T_i$, \
$i=1,\ldots,n$, if and only if $\Phi_{f,T}(P_\cM)\leq P_\cM$ and
  $\Phi_{f,T}(P_\cM^\perp)\leq P_\cM^\perp$. Since $\Phi_{f,T}(I)=I$, the result
  follows.
\end{proof}

Let
 $f $
 be   a positive regular  free holomorphic function, $m\geq 1$, and
 let $K^{(m)}_{f,T,R}$ be   the noncommutative Berezin kernel associated with the
noncommutative  domain  ${\bf D}_{f }^m$, i.e., associated with the
quadruple $q:=(f,m,T,R)$, where  $R:=(id-\Phi_{f,T})^m(I)$. We
remark that,  as in the proof of Theorem \ref{poisson}, one can use
Lemma \ref{lemma1} and  Lemma \ref{ineq-lim} to obtain  relation
$$\left(K^{(m)}_{f,T,R}\right)^* K^{(m)}_{f,T,R}=\sum_{k=0}^\infty \left(\begin{matrix} k+m-1\\ m-1
\end{matrix}\right)
\Phi_{f,T}^k(R)= I- Q_{f,T},
$$
where $Q_{f,T}:= \text{\rm SOT-}\lim_{k\to\infty}  \Phi_{f,T}^k(I)$.

\begin{lemma} \label{decomp2} Let
 $f:=\sum_{|\alpha|\geq 1} a_\alpha X_\alpha$
 be   a positive regular  free holomorphic function and $m\geq 1$.
 If $(T_1,\ldots, T_n)$ is an $n$-tuple of operators
  in the noncommutative  domain  ${\bf D}_{f }^m(\cH)$, then the limit
  $$
  Q_{f,T}:= \text{\rm SOT-}\lim_{k\to\infty}  \Phi_{f,T}^k(I)
 $$
 exists and
  we have
 \begin{equation*} \begin{split}
\ker Q_{f,T}&=\{ h\in \cH:\ \lim_{k\to\infty} \left<\Phi_{f,T}^k(I)h,h\right>=0\}\\
&=\{h\in \cH: \ \|K^{(m)}_{f,T,R} h\|=\|h\|\}\\
&=\ker \left[I-\left(K^{(m)}_{f,T,R}\right)^*K^{(m)}_{f,T,R}\right]
\end{split}
\end{equation*}
and
\begin{equation*}\begin{split}
\ker (I-Q_{f,T})&=\{ h\in \cH:\  \Phi_{f,T}^k(I)h =h, ~k\in \NN\}\\
&= \{ h\in \cH:\  \left<\Phi_{f,T}^k(I)h, h\right> =\|h\|^2, ~k\in \NN\}    \\
&=\ker K^{(m)}_{f,T,R},
\end{split}
\end{equation*}
where $K^{(m)}_{f,T,R}$ in the noncommutative Berezin kernel
associated with the noncommutative  domain  ${\bf D}_{f }^m$.
\end{lemma}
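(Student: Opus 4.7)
The plan is to first establish the existence of $Q_{f,T}$ and the key identity $\bigl(K^{(m)}_{f,T,R}\bigr)^* K^{(m)}_{f,T,R} = I - Q_{f,T}$, and then read off the two chains of kernel equalities as straightforward consequences of positivity and monotone convergence.

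First, I would verify existence of the SOT-limit. Since $T \in {\bf D}_f^m(\cH)$, applied with $s=1$ the defining inequality gives $\Phi_{f,T}(I) \leq I$, so $\Phi_{f,T}$ is a power bounded positive map. Applying the positive map $\Phi_{f,T}^k$ to both sides of $\Phi_{f,T}(I) \leq I$ shows that $\{\Phi_{f,T}^k(I)\}_{k=0}^\infty$ is a decreasing sequence of positive contractions, hence converges in the strong operator topology to a positive operator $Q_{f,T}$ with $0 \leq Q_{f,T} \leq I$. Next, I would identify $\bigl(K^{(m)}_{f,T,R}\bigr)^* K^{(m)}_{f,T,R}$. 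Using Lemma~\ref{lemma1}, the binomial identity
$\binom{i+j}{j} - \binom{i+j-1}{j} = \binom{i+j-1}{j-1}$, together with Lemma~\ref{ineq-lim} (which controls the boundary terms $\Phi_{f,T}^{k+1}(id-\Phi_{f,T})^j(I)$ for $j=1,\ldots,m-1$), one telescopes the series
\[
\sum_{k=0}^\infty \binom{k+m-1}{m-1}\Phi_{f,T}^k(R) = I - \text{SOT-}\lim_{k\to\infty}\Phi_{f,T}^k(I) = I - Q_{f,T},
\]
exactly as in the computation preceding Theorem~\ref{poisson}. This gives the fundamental identity $\|K^{(m)}_{f,T,R}h\|^2 = \|h\|^2 - \langle Q_{f,T}h, h\rangle$ for all $h \in \cH$.

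For the description of $\ker Q_{f,T}$, since $Q_{f,T} \geq 0$ we have $h \in \ker Q_{f,T}$ iff $\langle Q_{f,T}h,h\rangle = 0$, i.e.\ iff $\lim_k \langle \Phi_{f,T}^k(I)h,h\rangle = 0$. By the fundamental identity this happens iff $\|K^{(m)}_{f,T,R}h\| = \|h\|$, and the last set in the chain is literally $\ker[I - (K^{(m)}_{f,T,R})^* K^{(m)}_{f,T,R}] = \ker Q_{f,T}$ by that same identity.

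For $\ker(I - Q_{f,T})$, I would use the monotone structure. If $h \in \ker(I - Q_{f,T})$, then $\|h\|^2 = \lim_k \langle \Phi_{f,T}^k(I)h, h\rangle$; since the sequence $\langle \Phi_{f,T}^k(I)h, h\rangle$ is decreasing and bounded above by $\|h\|^2$, equality of its limit with $\|h\|^2$ forces $\langle \Phi_{f,T}^k(I)h, h\rangle = \|h\|^2$ for every $k$. Then from $\Phi_{f,T}^k(I) \leq I$ we get $\|(I - \Phi_{f,T}^k(I))^{1/2}h\|^2 = 0$, hence $\Phi_{f,T}^k(I)h = h$ for every $k$; the reverse implications are immediate by taking $k \to \infty$. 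Finally, $h \in \ker K^{(m)}_{f,T,R}$ iff $\|K^{(m)}_{f,T,R}h\|^2 = \langle (I - Q_{f,T})h,h\rangle = 0$, which, since $I - Q_{f,T} \geq 0$, is equivalent to $(I - Q_{f,T})h = 0$. I do not anticipate a real obstacle here; the only point requiring care is keeping straight which inequalities are consequences of $T \in {\bf D}_f^m(\cH)$ versus of Lemma~\ref{ineq-lim}, and ensuring the telescoping of the Berezin kernel series is justified in the strong operator topology (rather than merely the weak one).
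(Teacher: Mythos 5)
Your proposal is correct and follows essentially the same route as the paper: existence of $Q_{f,T}$ from the decreasing sequence of positive contractions $\Phi_{f,T}^k(I)$, the identity $\bigl(K^{(m)}_{f,T,R}\bigr)^*K^{(m)}_{f,T,R}=I-Q_{f,T}$ obtained from Lemma \ref{lemma1} together with the telescoping argument via Lemma \ref{ineq-lim} (as in the proof of Theorem \ref{poisson}), and then the kernel equalities read off by positivity. The paper merely leaves the last verifications as "fairly easy," which you have spelled out correctly.
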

\begin{proof} Since $\Phi_{f,T}(I)\leq I$, the sequence of positive
operators $\Phi_{f,T}^k(I)$ is decreasing. Consequently,
 the operator
 $Q_{f,T}$ exists  and has the properties:  $0\leq  Q_{f,T}\leq I$
 and
$ \Phi_{f,T}(Q_{f,T})= Q_{f,T}$. Using relation
$\left(K^{(m)}_{f,T,R}\right)^* K^{(m)}_{f,T,R} = I- Q_{f,T}$ we
deduce some of the equalities above. The others are fairly easy.
\end{proof}

 Now we can obtain the following Wold type decomposition
 for  $n$-tuples  of operators in the noncommutative domain  ${\bf D}_{f }^m(\cH)$.
\begin{theorem}\label{wold1} Let
 $f:=\sum_{|\alpha|\geq 1} a_\alpha X_\alpha$
 be   a positive regular  free holomorphic function and $m\geq 1$.
 If $(T_1,\ldots, T_n)$ is an $n$-tuple of operators
  in the noncommutative domain  ${\bf D}_{f }^m(\cH)$ and
  $$
  Q_{f,T}:= \text{\rm SOT-}\lim_{k\to\infty}  \Phi_{f,T}^k(I),
 $$
 then the space
$\cH$ admits a decomposition of the form
\begin{equation*}
\cH=\cM\oplus  \ker Q_{f,T} \oplus \ker (I-Q_{f,T}),
\end{equation*}
where
the subspaces  $\ker Q_{f,T}$ and $\ker (I-Q_{f,T})$
 are invariant under each  operator $T^*_i$, \ $i=1,\ldots, n$.
 \end{theorem}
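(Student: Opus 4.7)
The plan is to read off the decomposition directly from the three subspaces singled out in Lemma \ref{decomp2}, with Theorem \ref{inv-ker} doing all the heavy lifting. Lemma \ref{decomp2} already tells us that $Q_{f,T}$ exists as a SOT-limit, is a positive contraction, and (by WOT-continuity of $\Phi_{f,T}$ on bounded sets, Lemma \ref{wot-cp}) satisfies the fixed-point identity $\Phi_{f,T}(Q_{f,T})=Q_{f,T}$. Since $T\in {\bf D}_f^m(\cH)$, we have $\Phi_{f,T}(I)\leq I$, which makes $\Phi_{f,T}$ a power-bounded positive map with $\sum_{|\alpha|\geq 1}a_\alpha T_\alpha T_\alpha^*$ SOT-convergent, so the hypotheses of Theorem \ref{inv-ker} are in force.

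I will deduce the two invariance statements by applying Theorem \ref{inv-ker} twice. First, take $D:=Q_{f,T}$: the fixed-point equation gives $(id-\Phi_{f,T})(Q_{f,T})=0$, hence $(id-\Phi_{f,T})^m(Q_{f,T})=0\geq 0$, and Theorem \ref{inv-ker} yields that $\ker Q_{f,T}$ is invariant under each $T_i^*$. Next, take $D:=I-Q_{f,T}$, which is a positive contraction; linearity of the $(id-\Phi_{f,T})^m$ operator together with the same fixed-point identity gives
$$(id-\Phi_{f,T})^m(I-Q_{f,T})=(id-\Phi_{f,T})^m(I)-(id-\Phi_{f,T})^m(Q_{f,T})=(id-\Phi_{f,T})^m(I)\geq 0,$$
where the last inequality is the very condition defining ${\bf D}_f^m(\cH)$. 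A second application of Theorem \ref{inv-ker} then shows that $\ker(I-Q_{f,T})$ is also invariant under each $T_i^*$.

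It remains to produce the orthogonal decomposition. Since $Q_{f,T}$ is self-adjoint, for any $x\in\ker Q_{f,T}$ and $y\in\ker(I-Q_{f,T})$ we have
$$\langle x,y\rangle=\langle x,Q_{f,T}y\rangle=\langle Q_{f,T}x,y\rangle=0,$$
so $\ker Q_{f,T}\perp\ker(I-Q_{f,T})$. Defining $\cM$ to be the orthogonal complement of $\ker Q_{f,T}\oplus\ker(I-Q_{f,T})$ in $\cH$ finishes the decomposition.

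The only delicate point is the algebraic cancellation that makes $I-Q_{f,T}$ an admissible input for Theorem \ref{inv-ker} — without the fixed-point identity $\Phi_{f,T}(Q_{f,T})=Q_{f,T}$ we would have no control over $(id-\Phi_{f,T})^m(I-Q_{f,T})$. Everything else is bookkeeping, and no information about the action of $T_i^*$ on the third summand $\cM$ is asserted or needed.
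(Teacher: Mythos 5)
Your argument is correct. It follows the paper's general scheme (Lemma \ref{decomp2} for the existence and fixed-point property of $Q_{f,T}$, Theorem \ref{inv-ker} for invariance), but it diverges on the invariance of $\ker(I-Q_{f,T})$: the paper's primary proof obtains it from the noncommutative Berezin kernel, using the intertwining $K^{(m)}_{f,T,R}T_i^*=(W_i^*\otimes I)K^{(m)}_{f,T,R}$ of Lemma \ref{lemma1} together with the identification $\ker K^{(m)}_{f,T,R}=\ker(I-Q_{f,T})$ from Lemma \ref{decomp2}, whereas you apply Theorem \ref{inv-ker} to $D:=I-Q_{f,T}$ after the cancellation $(id-\Phi_{f,T})^m(I-Q_{f,T})=(id-\Phi_{f,T})^m(I)\geq 0$ coming from $\Phi_{f,T}(Q_{f,T})=Q_{f,T}$. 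This kernel-free route is essentially the ``second proof'' the paper sketches right after the theorem; your version is in fact slightly tidier, since you verify the full $m$-th order hypothesis of Theorem \ref{inv-ker} by linearity, while the paper's remark only records the first-order inequality $\Phi_{f,T}(I-Q_{f,T})\leq I-Q_{f,T}$. The paper also produces the three-fold decomposition structurally, via $\cH=\overline{Q_{f,T}(\cH)}\oplus\ker Q_{f,T}$ and $\ker(I-Q_{f,T})\subseteq Q_{f,T}(\cH)$, while you get orthogonality of the two kernels directly from self-adjointness of $Q_{f,T}$ and define $\cM$ as the remaining orthocomplement; both are equally valid since the theorem asserts nothing about the action of $T_i^*$ on $\cM$. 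What the paper's kernel argument buys is a statement (invariance of $\ker K^{(m)}_{f,T,R}$) that is reused elsewhere, e.g.\ in Corollary \ref{version2}; what your route buys is independence from the Berezin kernel machinery.
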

 \begin{proof}  According  to Lemma \ref{decomp2},   the operator
 $Q_{f,T}$ exists  and has the properties:  $0\leq  Q_{f,T}\leq I$
 and
$ \Phi_{f,T}(Q_{f,T})= Q_{f,T}$. Since
$$
\cH= \overline{ Q_{f,T}(\cH)}\oplus \ker Q_{f,T} \ \text{
and } \ \ker (I-Q_{f,T})\subseteq  Q_{f,T}(\cH),
$$
 we obtain  the  desired decomposition.
 The fact that $\ker Q_{f,T}$
 is  an invariant  subspace under each  operator $T^*_i$, \ $i=1,\ldots, n$,
 follows from Theorem \ref{inv-ker}.
Now we  assume that  $ Q_{f,T}\neq 0$.  According to Lemma
\ref{lemma1},
\begin{equation*}
K^{(m)}_{f,T,R}T_i^*=(W_i^*\otimes I_{\overline{
R^{1/2}(\cH)}})K^{(m)}_{f,T,R},\qquad i=1,\ldots,n.
\end{equation*}
Hence $\ker K^{(m)}_{f,T,R}$ is an invariant subspace  under  each
operator $T^*_i$, \ $i=1,\ldots, n$. On the other hand, due  to
Lemma \ref{decomp2}, we have $\ker K^{(m)}_{f,T,R}=\ker \left(I-
Q_{f,T}\right)$. The proof is complete.
 \end{proof}

We have another proof of the fact that $\ker (I-Q_{f,T})$
 are invariant under each  operator $T^*_i$, \ $i=1,\ldots, n$,
 which does not use
  the noncommutative Berezin kernel. Indeed, assume that  $ Q_{f,T}\neq
 0$.
Then
$$
\left<  Q_{f,T} h,h\right>=\langle
 \Phi_{f,T}^k( Q_{f,T})h,h\rangle \leq \|Q_{f,T}\|
\langle  \Phi_{f,T}^k (I)h,h\rangle, \qquad h\in \cH, k\in \NN.
$$
Taking the limit as $k\to\infty$, we obtain
$$
\left< Q_{f,T}h,h\right>\leq \| Q_{f,T}\|^2 \left< h,h\right>.
$$
Hence, $\| Q_{f,T}^{1/2}\|\leq \|Q_{f,T}^{1/2}\|^2 = \|
Q_{f,T}\|\leq 1$ and, consequently,  we deduce that  $\|
Q_{f,T}^{1/2}\|=0$ or $\|Q_{f,T}^{1/2}\|=1$. Since $ Q_{f,T}\neq 0$,
we must have  $\|Q_{f,T}\|=1$. We show now that
 the set $\ker (I-Q_{f,T})$ is invariant
under each $T_i^*$, \ $i=1,\ldots, n$.
 Indeed,
 note that $I-Q_{f,T}\geq 0$ and
 $$
 \Phi_{f,T}(I-Q_{f,T})= \Phi_{f,T}(I)-\Phi_{f,T}(Q_{f,T})\leq I-Q_{f,T}.
 $$
 Applying   Theorem \ref{inv-ker} to the positive operator
 $I-Q_{f,T}$, the result follows.

\smallskip

  Let $m\geq 1$,  $f:=\sum_{|\alpha|\geq 1} a_\alpha X_\alpha$ be
a positive regular free holomorphic function and let $\cP$ be a
family of noncommutative  polynomials with $\cN_\cP\neq 0$. Let
$(T_1,\ldots, T_n)$ be  an $n$-tuple of operators
  in the noncommutative variety $\cV_{f,\cP}^m(\cH)$ and  let $K_q$ be  the
   Berezin kernel associated with  $\cV_{f,\cP}^m(\cH)$, i.e.,
     associated with the tuple $q=(f,m,T,R,\cP)$, where
     $R:=(id-\Phi_{f,T})^m(I)$.
 Under these conditions, Lemma \ref{lemma2} and  Lemma \ref{ineq-lim}
 and  imply  $K_q^* K_q= I- Q_{f,T}$.
Consequently, one can
   obtain  the following version of Theorem \ref{wold1}.

\begin{corollary}\label{version2} The space $\cH$ admits an orthogonal  decomposition
\begin{equation*}
\cH=\cM\oplus  \ker (I- K_q ^*K_q) \oplus \ker K_q,
\end{equation*}
where
the subspaces $\ker (I- K_q ^*K_q)$ and $\ker K_q$
 are invariant under each  operator $T^*_i$, \ $i=1,\ldots, n$.
\end{corollary}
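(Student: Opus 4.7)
The plan is to deduce this corollary from Theorem \ref{wold1} by translating the operators $Q_{f,T}$ and $I-Q_{f,T}$ into their Berezin-kernel counterparts. The key identity, already noted in the paragraph preceding the statement, is
\[
K_q^{*}K_q \;=\; \sum_{k=0}^\infty \binom{k+m-1}{m-1}\Phi_{f,T}^{k}(R) \;=\; I - Q_{f,T},
\]
which follows from Lemma \ref{lemma2} combined with the telescoping argument of Lemma \ref{ineq-lim} (exactly as in the proof of Theorem \ref{poisson}, with $A=T$, $D=I$, and $\text{SOT-}\lim_{k\to\infty}\Phi_{f,T}^{k}(I)=Q_{f,T}$).

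Once this identity is in hand, the two kernel identifications are immediate: $\ker(I-K_q^{*}K_q)=\ker Q_{f,T}$, and $\ker K_q = \ker(K_q^{*}K_q) = \ker(I-Q_{f,T})$. Plugging these into the orthogonal decomposition provided by Theorem \ref{wold1} yields the decomposition asserted in the corollary, with $\cM$ the orthogonal complement of the two kernel summands.

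For the invariance of $\ker K_q$ under each $T_i^{*}$, I would use the intertwining relation from Lemma \ref{lemma2},
\[
K_q T_i^{*} \;=\; (B_i^{*}\otimes I_{\cR})K_q, \qquad i=1,\ldots,n,
\]
which directly gives $K_q h=0 \Rightarrow K_q T_i^{*} h = 0$. For the invariance of $\ker(I-K_q^{*}K_q)=\ker Q_{f,T}$, I would invoke Theorem \ref{inv-ker} (or the explicit argument that follows Theorem \ref{wold1}) applied to the positive operator $Q_{f,T}$, noting that $\Phi_{f,T}(Q_{f,T})=Q_{f,T}$ because $Q_{f,T}$ is the strong limit of the decreasing sequence $\Phi_{f,T}^{k}(I)$ and $\Phi_{f,T}$ is WOT-continuous on bounded sets by Lemma \ref{wot-cp}.

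There is no real obstacle here: the corollary is essentially a repackaging of Theorem \ref{wold1} in the variety setting, and the only nontrivial ingredient is the kernel identity $K_q^{*}K_q = I - Q_{f,T}$, which has already been established. The intertwining from Lemma \ref{lemma2} even gives a cleaner proof of the $T_i^{*}$-invariance of $\ker K_q$ than the ad hoc argument used for $\ker(I-Q_{f,T})$ in Theorem \ref{wold1}.
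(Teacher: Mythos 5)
Your proposal is correct and follows essentially the same route as the paper: the corollary is obtained from Theorem \ref{wold1} via the identity $K_q^*K_q=I-Q_{f,T}$ (Lemma \ref{lemma2} plus Lemma \ref{ineq-lim}), with the $T_i^*$-invariance of $\ker K_q$ coming from the intertwining relation $K_qT_i^*=(B_i^*\otimes I_{\cR})K_q$ and that of $\ker Q_{f,T}$ from Theorem \ref{inv-ker}, exactly as in the paper's argument for Theorem \ref{wold1} transported to the constrained kernel.
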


An interesting consequence of Theorem \ref{wold1} is the following
Wold type decomposition.

\begin{corollary}\label{wold2}  Let $m\geq 1$,  $f:=\sum_{|\alpha|\geq 1} a_\alpha X_\alpha$ be
a positive regular free holomorphic function and let $\cP$ be a
family of noncommutative  polynomials with $\cN_\cP\neq 0$. Let
$(T_1,\ldots, T_n)$ be  an $n$-tuple of operators
  in the noncommutative variety $\cV_{f,\cP}^m(\cH)$ and  let $K_q$ be  the
   Berezin kernel associated with  $\cV_{f,\cP}^m(\cH)$. Then the following statements are equivalent:
 \begin{enumerate}
 \item[(i)] the Hilbert space $\cH$ admits the orthogonal  decompositions
\begin{equation*}
\cH=  \ker  Q_{f,T}\oplus \ker (I- Q_{f,T})= \ker (I- K_q ^*K_q)
\oplus \ker K_q;
\end{equation*}
\item[(ii)] $Q_{f,T}$ is an orthogonal projection;
\item[(iii)] the noncommutative Berezin kernel $K_q$ is a partial isometry.
\end{enumerate}
 In this case,
the subspaces
$$\ker  Q_{f,T}= \ker (I- K_q ^*K_q)\quad \text{ and } \quad \ker (I-
Q_{f,T})=\ker K_q
$$
 are reducing for each  operator $T_i$, \ $i=1,\ldots, n$.
\end{corollary}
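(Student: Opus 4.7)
The plan is to leverage the identity $K_q^* K_q = I - Q_{f,T}$, which was derived in the paragraph preceding this corollary from Lemma \ref{lemma2} combined with Lemma \ref{ineq-lim}. All three conditions can be recast in terms of $Q_{f,T}$ through this identity, so most of the work is purely algebraic once the identity is in hand.

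First, I would dispatch (ii) $\Leftrightarrow$ (iii) directly: $K_q$ is a partial isometry if and only if $K_q^* K_q$ is an orthogonal projection, which by the identity is equivalent to $I - Q_{f,T}$, and hence $Q_{f,T}$ itself, being an orthogonal projection. Here one uses that $Q_{f,T}$ is self-adjoint (as an SOT-limit of self-adjoint operators) with $0 \leq Q_{f,T} \leq I$.

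For (ii) $\Rightarrow$ (i), if $Q_{f,T}$ is an orthogonal projection, then $\cH = \ker Q_{f,T} \oplus \ker(I - Q_{f,T})$ is the standard orthogonal splitting of a projection. The second decomposition $\cH = \ker(I - K_q^* K_q) \oplus \ker K_q$ follows from the kernel identifications $\ker Q_{f,T} = \ker(I - K_q^* K_q)$ and $\ker(I - Q_{f,T}) = \ker K_q^* K_q = \ker K_q$, both immediate consequences of the identity. For the converse (i) $\Rightarrow$ (ii), the first decomposition forces $Q_{f,T}$ to act as $0$ on $\ker Q_{f,T}$ and as the identity on $\ker(I - Q_{f,T})$; together with self-adjointness and the orthogonality of the two summands, this identifies $Q_{f,T}$ as the orthogonal projection onto $\ker(I - Q_{f,T})$.

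Finally, for the reducing statement, Theorem \ref{wold1} already shows that both $\ker Q_{f,T}$ and $\ker(I - Q_{f,T})$ are invariant under each $T_i^*$, and under (i) they are mutual orthogonal complements in $\cH$; consequently each is also $T_i$-invariant, so both become reducing for $T_1, \ldots, T_n$. I do not expect a genuine obstacle here, since the corollary is essentially a bookkeeping consequence of the identity $K_q^* K_q = I - Q_{f,T}$ together with the Wold-type decomposition already established in Theorem \ref{wold1}.
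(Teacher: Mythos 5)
Your proposal is correct and follows essentially the paper's route: both arguments hinge on the identity $K_q^*K_q=I-Q_{f,T}$, the fact that $Q_{f,T}$ is a positive contraction, and the invariance of $\ker Q_{f,T}$ and $\ker(I-Q_{f,T})$ under each $T_i^*$ from Theorem \ref{wold1}. The only difference is packaging: the paper condenses the equivalence of (i) and (ii) into the observation that $\ker\left[Q_{f,T}-Q_{f,T}^2\right]=\ker Q_{f,T}\oplus\ker(I-Q_{f,T})$ equals $\cH$ exactly when $Q_{f,T}$ is an orthogonal projection, while you verify the same equivalences directly.
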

\begin{proof}
 Since $Q_{f,T}$ is a positive
operator,
  it is well-known  that
 $$
 \ker [Q_{f,T}-  Q_{f,T}^2]=\ker Q_{f,T}
 \oplus \ker (I-Q_{f,T}).
 $$
 On the other hand,  note  that $ \ker [Q_{f,T}-  Q_{f,T}^2]=\cH$
 if and only if
  $Q_{f,T}$ is an orthogonal projection.
   Using the results preceding this corollary, we can complete the proof.
\end{proof}

\smallskip

Let  $m=1$,  $p=\sum_{|\alpha|\geq 1} a_\alpha X_\alpha$ be a
positive regular noncommutative polynomial and let $\cP$ be a set of
noncommutative polynomials such that $1\in \cN_\cP$. In
\cite{Po-domains}, using standard theory of representations of
$C^*$-algebras,  we obtained the following Wold type decomposition
for non-degenerate
  $*$-representations of the unital $C^*$-algebra $C^*(B_1,\ldots,
B_n)$, generated by the   constrained weighted shifts associated
with  the noncommutative variety $\cV^1_{p,\cP}$,  and the identity.
  If  \
$\pi:C^*(B_1,\ldots, B_n)\to B(\cK)$ is  a non-degenerate
$*$-representation  of $C^*(B_1,\ldots, B_n)$ on a separable Hilbert
space  $\cK$, then $\pi$ decomposes into a direct sum
$$
\pi=\pi_0\oplus \pi_1 \  \text{ on  } \ \cK=\cK_0\oplus \cK_1,
$$
where $\pi_0$ and  $\pi_1$  are disjoint representations of
$C^*(B_1,\ldots, B_n)$ on the Hilbert spaces
\begin{equation*}
\begin{split}
\cK_0:&=\left\{ x\in \cK:\ \lim\limits_{k\to \infty}
\left< \Phi^k_{p, V}(I_\cK)x,x\right>=0\right\}\quad \text{ and } \\
 \cK_1:&=\left\{ x\in \cK:\ \left< \Phi^k_{p, V}(I_\cK)x,x\right>
=\|x\|^2 \ \text{ for any } k\in \NN\right\},
\end{split}
\end{equation*}
 respectively, where $V_i:=\pi(B_i)$, \ $i=1,\ldots, n$. Moreover, up to an isomorphism,
\begin{equation*}
\cK_0\simeq\cN_\cP\otimes \cG, \quad  \pi_0(X)=X\otimes I_\cG \quad
\text{ for } \  X\in C^*(B_1,\ldots, B_n),
\end{equation*}
 where $\cG$ is a Hilbert space with
$$
\dim \cG=\dim \left\{\text{\rm range}\, [
I_\cK-\Phi_{p,V}(I_\cK)]\right\},
$$
 and $\pi_1$ is a $*$-representation  which annihilates the compact operators   and
 $\Phi_{p,\pi_1(B)}(I_{\cK_1})=I_{\cK_1}$, where
 $\pi_1(B):=(\pi_1(B_1),\ldots, \pi_1(B_1)$. Moreover, the
 decomposition is essentially unique.

 Note that the decomposition above
 coincides with the one provided by Corollary \ref{wold2} when $(T_1,\ldots, T_n)=(V_1,\ldots, V_n)$.

\bigskip

We need a few more definitions.   Let $\cP$ be a set of
noncommutative polynomials.  We say that  an $n$-tuple of operators
$T:=(T_1,\ldots, T_n)\in {\cV}^1_{f,\cP}(\cH)$ is of class $C_{\cdot
0}$ (or pure ) if
$$
\lim_{k\to\infty}\left<\Phi_{f,T}^k(I) h,h\right>=0\quad \text{for
any }\quad h\in \cH,
$$
and of class $C_{\cdot 1}$ if
$$
\lim_{k\to\infty}\left<\Phi_{f,T}^k(I) h,h\right>\neq 0\quad
\text{for any }\quad h\in \cH, ~h\neq 0.
$$
 We say that  $T:=(T_1,\ldots, T_n)\in
{\cV}^1_{f,\cP}(\cH)$  has a triangulation of type $C_{\cdot 0}-
C_{\cdot 1}$ if there is an orthogonal decomposition
$\cH=\cH_0\oplus \cH_1$ with respect to which
$$
T_i=\left(\begin{matrix} C_i&0\\
*& D_i\end{matrix} \right),\qquad i=1,\ldots,n,
$$
and the entries have the following properties:
\begin{enumerate}
\item[(i)] $T_i^*\cH_0\subseteq \cH_0$ for any $i=1,\ldots,n$;
\item[(ii)] $(C_1,\ldots, C_n)\in {\cV}^1_{f,\cP}(\cH_0)$ is of class
$C_{\cdot 0}$;
\item[(iii)]
$(D_1,\ldots, D_n)\in {\cV}^1_{f,\cP}(\cH_1)$ is of class $C_{\cdot
1}$.
\end{enumerate}
The type of the entry denoted by $*$ is not specified.

\begin{theorem}\label{factori}
Every $n$-tuple  $T:=(T_1,\ldots, T_n)\in {\cV}^1_{f,\cP}(\cH)$ has
a triangulation of type
$$
\left(\begin{matrix}C_{\cdot 0}&0\\
*& C_{\cdot 1}\end{matrix}
\right)
$$
Moreover, this triangulation is uniquely determined.
\end{theorem}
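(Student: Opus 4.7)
The plan is to take $\cH_0:=\ker Q_{f,T}$ with $Q_{f,T}=\text{SOT-}\lim_{k\to\infty}\Phi_{f,T}^k(I)$ (which exists by Theorem \ref{wold1}, since $\Phi_{f,T}(I)\le I$ makes the sequence decreasing), and to set $\cH_1:=\cH\ominus\cH_0$. By Theorem \ref{inv-ker} applied with $D=I$, $\cH_0$ is invariant under each $T_i^*$, hence $\cH_1$ is invariant under each $T_i$, and the required lower-triangular block decomposition holds with $C_i=P_{\cH_0}T_i|_{\cH_0}$ and $D_i=T_i|_{\cH_1}$. A routine induction on $|\alpha|$, using $T_i\cH_1\subseteq\cH_1$, gives $C_\alpha=P_{\cH_0}T_\alpha|_{\cH_0}$ and $D_\alpha=T_\alpha|_{\cH_1}$, so $p(C)=P_{\cH_0}p(T)|_{\cH_0}=0=p(D)$ for $p\in\cP$, and the identities $\|C_\alpha^*h\|=\|T_\alpha^*h\|$ for $h\in\cH_0$, $\|D_\alpha^*h\|\le\|T_\alpha^*h\|$ for $h\in\cH_1$ show that both tuples lie in the appropriate noncommutative variety. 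The class $C_{\cdot 0}$ property of $C$ is then immediate: $\langle\Phi_{f,C}^k(I_{\cH_0})h,h\rangle=\langle\Phi_{f,T}^k(I)h,h\rangle\to 0$ for $h\in\cH_0=\ker Q_{f,T}$.

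The main obstacle is showing $(D_1,\ldots,D_n)$ is of class $C_{\cdot 1}$, since $\|D_\alpha^*h\|\le\|T_\alpha^*h\|$ runs the wrong way for that conclusion. My plan is to introduce $Q_1:=\text{SOT-}\lim_{k\to\infty}\Phi_{f,T}^k(P_{\cH_1})$, which exists as a decreasing limit because $T_i\cH_1\subseteq\cH_1$ combined with $\Phi_{f,T}(I)\le I$ forces $\Phi_{f,T}(P_{\cH_1})\le P_{\cH_1}$ by a direct compression estimate, and to set $Q_0:=Q_{f,T}-Q_1=\text{SOT-}\lim_{k\to\infty}\Phi_{f,T}^k(P_{\cH_0})$. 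A short induction, exploiting $T_i\cH_1\subseteq\cH_1$ together with the fact that $\Phi_{f,T}^k(P_{\cH_1})$ annihilates $\cH_0$ (because $T_\alpha^*\cH_0\subseteq\cH_0$), yields the key identity $\Phi_{f,T}^k(P_{\cH_1})h=\Phi_{f,D}^k(I_{\cH_1})h$ for every $h\in\cH_1$ and $k\ge 1$, whence $Q_{f,D}$ coincides with $Q_1|_{\cH_1}$. Using $Q_{f,T}|_{\cH_0}=0$ one checks $Q_0\cH\subseteq\cH_1$; the restriction $\widetilde Q_0:=Q_0|_{\cH_1}$ is positive on $\cH_1$ and bounded by $I_{\cH_1}$, and by the same invariance computation satisfies $\Phi_{f,D}(\widetilde Q_0)=\widetilde Q_0$. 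Monotonicity of $\Phi_{f,D}$ then gives $\Phi_{f,D}^k(I_{\cH_1})\ge\widetilde Q_0$, and in the limit $Q_{f,D}\ge\widetilde Q_0$. Consequently, if $h\in\cH_1$ satisfies $Q_{f,D}h=0$, then $\widetilde Q_0 h=0$ and $Q_1 h=0$, so $Q_{f,T}h=0$, forcing $h\in\cH_0\cap\cH_1=\{0\}$; this is the $C_{\cdot 1}$ property of $D$.

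For uniqueness, let $\cH=\cH_0'\oplus\cH_1'$ be any triangulation of the required form, with corresponding entries $C'$ and $D'$. The $C_{\cdot 0}$ property on $\cH_0'$ together with the identity $\langle\Phi_{f,C'}^k(I_{\cH_0'})h,h\rangle=\langle\Phi_{f,T}^k(I)h,h\rangle$ forces $\cH_0'\subseteq\ker Q_{f,T}=\cH_0$. Conversely, for any $h\in\cH_1'\cap\cH_0$, the analogue of the identity above, namely $\Phi_{f,T}^k(P_{\cH_1'})h=\Phi_{f,D'}^k(I_{\cH_1'})h$, together with $P_{\cH_1'}\le I$, yields $\langle Q_{f,D'}h,h\rangle\le\langle Q_{f,T}h,h\rangle=0$, and the $C_{\cdot 1}$ property of $D'$ gives $h=0$. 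Hence $\cH_0'=\cH_0$ and the triangulation is unique.
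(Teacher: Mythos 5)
Your proof is correct, and it follows the paper's skeleton — the same subspace $\cH_0=\ker Q_{f,T}$, the same block entries $C_i$, $D_i$, the same verification that $(C_1,\ldots,C_n)$ is of class $C_{\cdot 0}$ and that both tuples stay in the variety, and essentially the same uniqueness argument — but it handles the one genuinely delicate point, namely that $(D_1,\ldots,D_n)$ is of class $C_{\cdot 1}$, by a different mechanism. The paper proves the pointwise bound $\|Q_{f,T}h\|\le\left<\Phi_{f,D}^q(I_{\cH_1})h,h\right>^{1/2}$ by first showing $Q_{f,T}=\Phi_{f,T}^q(Q_{f,T}P_{\cH_1})$ and then running a Cauchy--Schwarz estimate on the approximating sums for $\Phi_{f,T}^q$ applied to the non-self-adjoint operator $Q_{f,T}P_{\cH_1}$. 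You instead split $Q_{f,T}=Q_0+Q_1$ with $Q_1:=\text{\rm SOT-}\lim_{k\to\infty}\Phi_{f,T}^k(P_{\cH_1})$ (a decreasing limit, since co-invariance of $\cH_0$ does give $\Phi_{f,T}(P_{\cH_1})\le P_{\cH_1}$), identify $Q_1|_{\cH_1}=Q_{f,D}$ through the compression identity $\Phi_{f,T}^k(P_{\cH_1})|_{\cH_1}=\Phi_{f,D}^k(I_{\cH_1})$, and dominate $\widetilde Q_0:=Q_0|_{\cH_1}$ by $Q_{f,D}$ using the fixed-point relation $\Phi_{f,D}(\widetilde Q_0)=\widetilde Q_0$ and monotonicity; in effect this gives $\left<Q_{f,T}h,h\right>\le 2\left<Q_{f,D}h,h\right>$ for $h\in\cH_1$, so $Q_{f,D}h=0$ forces $h\in\ker Q_{f,T}\cap\cH_1=\{0\}$. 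Your route is more structural and avoids the approximation/Cauchy--Schwarz computation entirely, at the price of invoking WOT-continuity of $\Phi_{f,T}$ on bounded sets (Lemma \ref{wot-cp}) to obtain $\Phi_{f,T}(Q_{f,T})=Q_{f,T}$ and $\Phi_{f,T}(Q_1)=Q_1$, hence $\Phi_{f,T}(Q_0)=Q_0$ — a step you leave implicit in ``the same invariance computation'' and should state explicitly; the paper's route, in exchange, yields the sharper quantitative inequality valid for every fixed $q$. Two minor expository additions would make your write-up airtight: record why $\Phi_{f,T}^k(P_{\cH_1})$ annihilating $\cH_0$ plus self-adjointness puts its range in $\cH_1$ (so $\widetilde Q_0$ is a positive operator on $\cH_1$), and, in the uniqueness part, the one-line argument that $\cH_0'\subseteq\cH_0$ together with $\cH_1'\cap\cH_0=\{0\}$ yields $\cH_0\subseteq\cH_0'$ (decompose $h\in\cH_0$ as $h_0'+h_1'$ and note $h_1'=h-h_0'\in\cH_0\cap\cH_1'$).
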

\begin{proof}
First, note that due to Theorem \ref{wold1},  the subspace
$$
\cH_0:=\left\{ h\in\cH:\ \lim_{k\to\infty}\left<\Phi_{f,T}^k(I_\cH)
h,h\right>=0\right\}
$$
is invariant under each operator $T_i^*$, $i=1,\ldots,n$. The
decomposition $\cH=\cH_0\oplus \cH_1$, where $\cH_1:=\cH\ominus
\cH_0$, yields the triangulations
$$
T_i^*=\left(\begin{matrix} C_i^*&*\\
0& D_i^*\end{matrix} \right),\qquad i=1,\ldots,n,
$$
where $C_i^*:=T_i^*|_{\cH_0}$ and $D_i^*:=P_{\cH_1}T_i^*|_{\cH_1}$
for each $i=1,\ldots,n$. Since $T_i^*(\cH_0)\subseteq \cH_0$,
$i=1,\ldots,n$, we have
$$
\Phi_{f,C}(I_{\cH_0})=P_{\cH_0} \Phi_{f,T}(I_\cH)|_{\cH_0}\leq
I_{\cH_0}
$$
and $p(C_1,\ldots, C_n)=P_{\cH_0} p(T_1,\ldots, T_n)|_{\cH_0}=0$ for
any $p\in \cP$. Therefore, $(C_1,\ldots, C_n)\in
\cV^1_{f,\cP}(\cH_0)$. On the other hand, we have
$$
\lim_{k\to\infty}\left<\Phi_{f,C}^k(I_{\cH_0})
h,h\right>=\lim_{k\to\infty}\left<\Phi_{f,T}^k(I_\cH)
h,h\right>=0,\quad h\in \cH_0,
$$
which shows that  the $n$-tuple  $C:=(C_1,\ldots, C_n) $ is of class
$C_{\cdot 0}$. Now,  due to the fact that $T_i(\cH_1)\subseteq \cH_1$,
$i=1,\ldots, n$,  and $\Phi_{f,T}$  is a positive map, we have
$$
\Phi_{f,D}(I_{\cH_1})= P_{\cH_1}\Phi_{f,T}(P_{\cH_1})|_{\cH_1}\leq
P_{\cH_1}\Phi_{f,T}( I_\cH)|_{\cH_1} \leq I_{\cH_1}
$$
and  $p(D_1,\ldots, D_n)=  p(T_1,\ldots, T_n)|_{\cH_1}=0$ for any
$p\in \cP$.  Therefore,  $(D_1,\ldots, D_n)\in
\cV^1_{f,\cP}(\cH_1)$.
 We need to show that
$$
\lim_{k\to\infty} \left<\Phi_{f,D}^k(I_{\cH_1}) h,h\right>\neq 0
\quad \text{ for all }\quad h\in\cH_1, h\neq 0.
$$
   Taking into account that $\Phi_{f,T}^k(I) P_{\cH_0}\to 0$  strongly,
   as $k\to\infty$, $\|\Phi_{f,T}^k(I) P_{\cH_0}\|\leq 1$ for $k\in \NN$,
   and $\Phi_{f,T}$ is WOT -continuous on bounded sets, we deduce that
  \begin{equation*}
\lim\limits_{k\to\infty}
  \left<\Phi_{f,T}^q \left(\Phi_{f,T}^k(I) P_{\cH_0}\right)h, h'\right>=0, \qquad h,h'\in \cH,
\end{equation*}
for each $q\geq 1$.
Hence, using the fact that $Q_{f,T}:= \text{\rm
SOT-}\lim_{k\to\infty} \Phi_{f,T}^k(I)$,   we have
\begin{equation}
\begin{split}\label{qft}
 \left<Q_{f,T}h, h'\right>
 &=\lim\limits_{k\to\infty}
  \left<\Phi_{f,T}^q \left(\Phi_{f,T}^k(I)\right))h, h'\right>\\
  &=\lim\limits_{k\to\infty}
  \left<\Phi_{f,T}^q \left(\Phi_{f,T}^k(I) P_{\cH_0}\right)h, h'\right> +\lim\limits_{k\to\infty}
  \left<\Phi_{f,T}^q \left(\Phi_{f,T}^k(I) P_{\cH_1}\right)h, h'\right>\\
  &=\lim\limits_{k\to\infty}
  \left<\Phi_{f,T}^q \left(\Phi_{f,T}^k(I) P_{\cH_1}\right)h, h'\right>
=\left<\Phi_{f,T}^q \left( Q_{f,T} P_{\cH_1}\right)h, h'\right>
\end{split}
\end{equation}
for any $h,h'\in \cH$.
Now, we need to prove  that
\begin{equation*}
\begin{split}
 \|\ Q_{f,T} h\|\leq \left<\Phi_{f,T}^q(P_{\cH_1})h, h\right>^{1/2},\qquad h\in \cH.
\end{split}
\end{equation*}
First, recall that  $\|\Phi_{f,T}^k(Q_{f,T} P_{\cH_1})\|\leq 1$, $k\in  \NN$, and
$\Phi_{f,T}$ is WOT -continuous on bounded sets. Consequently, given $h,h'\in \cH$,  the expression
$\left< \Phi_{f,T}^k(Q_{f,T} P_{\cH_1})h, h'\right>$ can be approximated by sums of type
$$
\Sigma :=\sum_{|\alpha_q|\leq N_q}\cdots \sum_{|\alpha_1|\leq N_1}
\left< a_{\alpha_q}\cdots a_{\alpha_1} T_{\alpha_q}\cdots T_{\alpha_1} (Q_{f,T} P_{\cH_1}) T_{\alpha_1}^*\cdots T_{\alpha_q}^* h, h'\right>,
$$
where $N_1,\ldots, N_q\in \NN$.
Since $\|Q_{f,T}\|\leq 1$, $a_\alpha\geq 0$, we obtain
\begin{equation*}
\begin{split}
&\left|\left< a_{\alpha_q}\cdots a_{\alpha_1} T_{\alpha_q}\cdots T_{\alpha_1} (Q_{f,T} P_{\cH_1}) T_{\alpha_1}^*\cdots T_{\alpha_q}^* h, h'\right>\right|\\
&\qquad \leq a_{\alpha_q}\cdots a_{\alpha_1}\left\| (Q_{f,T} P_{\cH_1}) T_{\alpha_1}^*\cdots T_{\alpha_q}^* h\right\|\left\|   T_{\alpha_1}^*\cdots T_{\alpha_q}^* h'\right\|\\
&\qquad \leq a_{\alpha_q}\cdots a_{\alpha_1} \left\|  P_{\cH_1} T_{\alpha_1}^*\cdots T_{\alpha_q}^* h\right\|\left\|   T_{\alpha_1}^*\cdots T_{\alpha_q}^* h'\right\|.
\end{split}
\end{equation*}
Applying  Cauchy's inequality, we get
\begin{equation*}
\begin{split}
|\Sigma|&\leq \left(\sum_{|\alpha_q|\leq N_q}\cdots \sum_{|\alpha_1|\leq N_1}
\left< a_{\alpha_q}\cdots a_{\alpha_1} T_{\alpha_q}\cdots T_{\alpha_1} ( P_{\cH_1}) T_{\alpha_1}^*\cdots T_{\alpha_q}^* h, h\right>\right)^{1/2}\\
&\qquad \qquad \times \left(\sum_{|\alpha_q|\leq N_q}\cdots \sum_{|\alpha_1|\leq N_1}
\left< a_{\alpha_q}\cdots a_{\alpha_1} T_{\alpha_q}\cdots T_{\alpha_1} T_{\alpha_1}^*\cdots T_{\alpha_q}^* h', h'\right>\right)^{1/2}.
\end{split}
\end{equation*}
Taking the limits as $N_1\to\infty,\ldots, N_q\to\infty$, we obtain
\begin{equation*}
\begin{split}
\left|\left< \Phi_{f,T}^q(Q_{f,T} P_{\cH_1})h,h'\right>\right|
&\leq \left< \Phi_{f,T}^q(P_{\cH_1})h,h\right>^{1/2} \left< \Phi_{f,T}^q(I)h',h'\right>^{1/2}\\
&\leq \left< \Phi_{f,T}^q(P_{\cH_1})h,h\right>^{1/2}\|h'\|
\end{split}
\end{equation*}
for any $h,h'\in \cH$.
Hence, we deduce that
$$
\left\| \Phi_{f,T}^q(Q_{f,T} P_{\cH_1})h\right\|\leq
 \left< \Phi_{f,T}^q(P_{\cH_1})h,h\right>^{1/2},\qquad h\in \cH, q\in \NN.
$$
Combining this inequality with  relation \eqref{qft}, we obtain
$$
\| Q_{f,T} h\|\leq
 \left< \Phi_{f,T}^q(P_{\cH_1})h,h\right>^{1/2}=\left<\Phi_{f,D}^q(I_{\cH_1})h, h\right>^{1/2},
 \qquad h\in \cH, q\in \NN.
$$
Let $h\in \cH_1$, $h\neq 0$, and assume that
$ \Phi_{f,D}^q(I_{\cH_1})h\to 0$, as $q\to\infty$. The above
inequality shows that $ Q_{f,T}h= 0$, i.e.,  $h\in \cH_0$, which is
a contradiction.

Now, we prove the uniqueness. Assume that there is another
decomposition $\cH=\cM_0\oplus \cM_1$ which yields the
triangulations
$$
T_i=\left(\begin{matrix} {E_i}&0\\
*& {F_i}\end{matrix} \right),\qquad i=1,\ldots,n,
$$
  of type
$
\left(\begin{matrix}C_{\cdot 0}&0\\
*& C_{\cdot 1}\end{matrix}
\right), $
where ${E_i}^*:=T_i^*|_{\cM_0}$ and
${F_i}^*:=P_{\cM_1}T_i^*|_{\cM_1}$ for each $i=1,\ldots,n$. To prove
uniqueness, it is enough to show that $\cH_0=\cM_0$. Notice that if
$h\in \cM_0$, then, due to the fact that  $(E_1,\ldots, E_n)$ is of
class $C_{\cdot 0}$, we have
$$
\lim_{k\to\infty} \left<\Phi_{f,T}^k h,h\right>= \lim_{k\to\infty}
\left<\Phi_{f,E}^k h,h\right>=0.
$$
Hence, $h\in \cH_0$, which proves that $\cM_0\subseteq \cH_0$.
Assume now that $h\in\cH_0\ominus \cM_0$.
Since $h\in\cM_1$, we have
$$
 \lim_{k\to\infty} \left<\Phi_{f,F}^k h,h\right> =\lim_{k\to\infty}
\left<\Phi_{f,T}^k (P_{\cM_1})h,h\right>\leq \lim_{k\to\infty}
\left<\Phi_{f,T}^k (I)h,h\right> =0.
$$
 Consequently,  since    $(F_1,\ldots, F_n)$ is of class $C_{\cdot 1}$,
  we must have $h=0$. Hence, we deduce that $\cH_0\ominus\cM_0=\{0\}$,
  which shows that $\cM_0=\cH_0$. This completes the proof.
\end{proof}

\begin{corollary}  If  $T:=(T_1,\ldots, T_n)\in {\cV}^1_{f,\cP}(\cH)$ is
 such  that $T\notin C_{\cdot 0}$ and $T\notin C_{\cdot 1}$, then there
 is a non-trivial joint invariant subspace
under  the operators $T_1,\ldots, T_n$.
\end{corollary}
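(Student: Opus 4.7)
The plan is to deduce this immediately from the triangulation provided by Theorem \ref{factori}. First, I would apply that theorem to the given tuple $T$ to obtain the orthogonal decomposition $\cH=\cH_0\oplus \cH_1$, together with the triangulation
$$
T_i=\left(\begin{matrix} C_i&0\\ *& D_i\end{matrix} \right),\qquad i=1,\ldots,n,
$$
where $(C_1,\ldots,C_n)\in \cV^1_{f,\cP}(\cH_0)$ is of class $C_{\cdot 0}$, $(D_1,\ldots,D_n)\in \cV^1_{f,\cP}(\cH_1)$ is of class $C_{\cdot 1}$, and $\cH_0$ is invariant under each $T_i^*$.

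Since $\cH_0$ is invariant under $T_i^*$ for every $i$, its orthogonal complement $\cH_1=\cH\ominus\cH_0$ is invariant under each $T_i$. The only thing left to verify is that $\cH_1$ is a \emph{non-trivial} joint invariant subspace, i.e., that $\{0\}\neq\cH_1\neq\cH$.

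If $\cH_1=\{0\}$, then $\cH_0=\cH$, so $T=C$ is of class $C_{\cdot 0}$, contradicting the hypothesis $T\notin C_{\cdot 0}$. If instead $\cH_1=\cH$, then $\cH_0=\{0\}$, so $T=D$ is of class $C_{\cdot 1}$, contradicting $T\notin C_{\cdot 1}$. Hence $\cH_1$ is a proper non-zero joint invariant subspace for $T_1,\ldots,T_n$, completing the proof. No step here is a real obstacle; the entire content is contained in Theorem \ref{factori}, and this corollary just records the observation that the $C_{\cdot 0}$--$C_{\cdot 1}$ triangulation is genuinely nontrivial precisely when $T$ fails to lie in either extreme class.
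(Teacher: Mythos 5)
Your proposal is correct and is exactly the argument the paper intends: the corollary is stated as an immediate consequence of Theorem \ref{factori}, with $\cH_1=\cH\ominus\cH_0$ serving as the joint invariant subspace, and the hypotheses $T\notin C_{\cdot 0}$, $T\notin C_{\cdot 1}$ ruling out $\cH_1=\{0\}$ and $\cH_1=\cH$ respectively. Nothing is missing.
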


 We say that  $T:=(T_1,\ldots, T_n)\in
{\cV}^1_{f,\cP}(\cH)$ is of class $C_{c}$  if
$$
\left<\Phi_{f,T}^k(I) h,h\right>=\|h\|^2\quad \text{for any }\quad
h\in \cH, k\in \NN,
$$
and of class $C_{cnc}$ if for each  $ h\in \cH, ~h\neq 0$, there exists $k\in \NN$ such that
$$
\left<\Phi_{f,T}^k(I) h,h\right>\neq \|h\|^2.
$$
We say that $T:=(T_1,\ldots, T_n)\in {\cV}^1_{f,\cP}(\cH)$  has a
triangulation of type $C_{c}- C_{cnc}$ if there is an orthogonal
decomposition $\cH=\cH_c\oplus \cH_{cnc}$ with respect to which
$$
T_i=\left(\begin{matrix} C_i&0\\
*& D_i\end{matrix} \right),\qquad i=1,\ldots,n,
$$
and the entries have the following properties:
\begin{enumerate}
\item[(i)] $T_i^*\cH_c\subseteq \cH_c$ for any $i=1,\ldots,n$;
\item[(ii)] $(C_1,\ldots, C_n)\in {\cV}^1_{f,\cP}(\cH_c)$ is of class
$C_{u}$;
\item[(iii)]
$(D_1,\ldots, D_n)\in {\cV}^1_{f,\cP}(\cH_{cnc})$ is of class $C_{
cnc}$.
\end{enumerate}

\begin{theorem}\label{factori2}  Let $\cP$ be a set of noncommutative polynomials.
Every $n$-tuple  of operators  $T:=(T_1,\ldots, T_n)\in
{\cV}^1_{f,\cP}(\cH)$ has a triangulation of type
$$
\left(\begin{matrix}C_{c}&0\\
*& C_{cnc}\end{matrix} \right).
$$
Moreover, this triangulation is uniquely determined.
\end{theorem}
\begin{proof}

Consider the subspace $\cH_c\subseteq \cH$ defined by
$$
\cH_c:=\left\{h\in \cH:\ \left<\Phi_{f,T}^k h,h\right>=\|h\|^2
\text{ for any } k\in \NN\right\}.
$$
 The fact  that   $\cH_c$ is invariant under each operator $T_1^*,\ldots,
 T_n^*$ is due
  to Lemma \ref{decomp2} and Theorem \ref{wold1}.
Consequently, we have the following triangulation with respect to
the decomposition $\cH=\cH_c\oplus \cH_{cnc}$,
$$
T_i=\left(\begin{matrix}C_i&0\\
*&D_i
\end{matrix}\right),\qquad i=1,\ldots,n,
$$
where $C_i^*:=T_i^*|_{\cH_c}$ and
$D_i^*:=P_{\cH_{cnc}}T_i^*|_{\cH_{cnc}}$ for each $i=1,\ldots,n$. As
in the proof of Theorem \ref{factori}, taking into account that
$T_i^*(\cH_c)\subseteq \cH_c$ and $T_i(\cH_{cnc})\subseteq
\cH_{cnc}$ for each $i=1,\ldots,n$,  we can show that $(C_1,\ldots,
C_n)\in \cV^1_{f,\cP}(\cH_c)$ and $(D_1,\ldots, D_n)\in
\cV^1_{f,\cP}(\cH_{cnc})$. Since
$$
 \left<\Phi_{f,C}^k(I_{\cH_c})
h,h\right>= \left<\Phi_{f,T}^k(I_\cH) h,h\right>=\|h\|^2,\qquad h\in
\cH_c, k\in \NN,
$$
  the $n$-tuple  $(C_1,\ldots, C_n) $ is of class
$C_{u}$. Now, we need to show that $(D_1,\ldots, D_n) $ is of class
$C_{cnc}$. To this end, let $h\in \cH_{cnc}$, $h\neq 0$, and assume
that $\left<\Phi_{f,D}^k(I_\cH) h,h\right>=\|h\|^2$ for all $k\in
\NN$. Then, we have
 \begin{equation*}
\begin{split}
\|h\|^2&=\left<\Phi_{f,D}^k(I_\cH)
h,h\right>=\left<\Phi_{f,T}^k(P_{\cH_{cnc}}) h,h\right>\\
&\leq \left<\Phi_{f,T}^k(I_\cH) h,h\right>\leq \|h\|^2.
\end{split}
\end{equation*}
Consequently, $\left<\Phi_{f,T}^k(I_\cH) h,h\right>= \|h\|^2$ for
all $k\in \NN$. Since $h\in \cH_{cnc}$, we must have $h=0$. This
proves that $(D_1,\ldots, D_n) $ is of class $C_{cnc}$. The
uniqueness of the triangulation can be proved as in Theorem
\ref{factori}. We leave it to the reader. The proof is complete.
\end{proof}

\begin{corollary}
If $T:=(T_1,\ldots, T_n)\in {\cV}^1_{f,\cP}(\cH)$  is such that $
\Phi_{f,T}(I)\neq I$ and there is a non-zero vector $h\in \cH$ such
that $ \left<\Phi_{f,T}^k h,h\right>=\|h\|^2$ for any $k\in \NN$,
then there is a non-trivial invariant subspace under the operators
$T_1,\ldots, T_n$.
\end{corollary}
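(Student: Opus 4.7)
The plan is to apply the triangulation theorem (Theorem \ref{factori2}) and verify that, under the two stated hypotheses, both summands of the canonical $C_c$--$C_{cnc}$ decomposition are non-zero; the desired non-trivial joint invariant subspace is then forced by the block structure.

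First I would consider the subspace
$$
\cH_c:=\left\{h\in\cH:\ \left<\Phi_{f,T}^k(I)h,h\right>=\|h\|^2 \text{ for every } k\in\NN\right\}
$$
which, by Theorem \ref{factori2}, is invariant under each $T_i^*$, so that $\cH_{cnc}:=\cH\ominus\cH_c$ is invariant under each $T_i$. The existence of a non-zero vector $h\in\cH$ satisfying $\left<\Phi_{f,T}^k(I)h,h\right>=\|h\|^2$ for all $k\in\NN$ is precisely the statement that $\cH_c\neq\{0\}$.

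Next I would show $\cH_c\neq\cH$, and this is where the second hypothesis $\Phi_{f,T}(I)\neq I$ is used. If $\cH_c=\cH$, then in particular $\left<\Phi_{f,T}(I)h,h\right>=\|h\|^2$ for every $h\in\cH$. Since $T\in\cV^1_{f,\cP}(\cH)\subseteq\mathbf{D}_f^1(\cH)$, the operator $I-\Phi_{f,T}(I)$ is positive, and the identity $\left<(I-\Phi_{f,T}(I))h,h\right>=0$ on a positive operator forces $I-\Phi_{f,T}(I)=0$, contradicting $\Phi_{f,T}(I)\neq I$.

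Combining the two steps, $\cH_{cnc}$ is a non-trivial proper subspace of $\cH$ invariant under each of $T_1,\ldots,T_n$. I do not expect a serious obstacle here: the content of the argument is that the two hypotheses are exactly what is needed to make both $\cH_c$ and $\cH_{cnc}$ non-zero, after which Theorem \ref{factori2} does the work.
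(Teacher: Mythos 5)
Your proof is correct and follows exactly the route the paper intends: the corollary is an immediate consequence of the $C_c$--$C_{cnc}$ triangulation of Theorem \ref{factori2}, with the nonzero vector $h$ giving $\cH_c\neq\{0\}$ and the positivity of $I-\Phi_{f,T}(I)$ together with $\Phi_{f,T}(I)\neq I$ giving $\cH_c\neq\cH$, so that $\cH_{cnc}$ is a non-trivial joint invariant subspace for $T_1,\ldots,T_n$. No gaps.
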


  Note that $C_c\subseteq C_{\cdot 1}$. Combining
  Theorem \ref{factori}  with   Theorem \ref{factori2}, we
obtain another triangulation for    $n$-tuples  of operators in
${\cV}^1_{f,\cP}(\cH)$, that is,
$$
\left(
\begin{matrix}
C_{\cdot 0} &0&0\\
*& C_c &0\\
*&*& C_{cnc} \cap C_{\cdot 1}
\end{matrix}
\right).
$$

\smallskip

According to Corollary \ref{Fo-BR},   we have an analogue  of Foia\c
s \cite{Fo} and de Branges--Rovnyak \cite{BR}
  model theorem,  for   $n$-tuples of operators $(T_1,\ldots, T_n) \in \cV^m_{f,\cP}(\cH)$
  of class  $ C_{\cdot 0}$.
When $(A_1,\ldots, A_n)$ is of class $ C_{\cdot 1}$, i.e.,
$$
\lim_{k\to\infty}\left<\Phi_{f,A}^k(I) h,h\right>\neq 0\quad
\text{for any }\quad h\in \cH, ~h\neq 0,
$$
  we
can prove the following result.

\begin{theorem} Let $p:=\sum_{1\leq |\alpha|\leq N} a_\alpha X_\alpha$ be
 a positive regular  polynomial and let $\cP$ be a set
 of noncommutative polynomials.
If $A:=(A_1,\ldots, A_n)\in B(\cH)^n$ is
  an $n$-tuple of
operators   of class  $ C_{\cdot 1}$ such that $\Phi_{p,A} $ is
power bounded and $q(A_1,\ldots, A_n)=0$ for all $q\in \cP$, then
there exists $(T_1,\ldots, T_n)\in \cV^m_{p,\cP}(\cH)$  of class
$C_c$ such that
$$  A_iY=  YT_i ,\qquad i=1,\ldots,n, $$
 for some one-to-one
operator $Y\in B(\cH)$ with range dense in $\cH$. If, in addition,
$\cH$ is finite dimensional, then $Y$ is an invertible operator.
\end{theorem}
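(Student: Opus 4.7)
The plan is to mimic the classical Sz.-Nagy construction for $C_{1\cdot}$ contractions, adapted to our noncommutative multivariable setting. First, because $A$ is of class $C_{\cdot 1}$, the limit $\ell(h):=\lim_{k\to\infty}\langle \Phi_{p,A}^k(I)h,h\rangle$ exists for every $h\in\cH$ and is strictly positive when $h\neq 0$. Combined with the power boundedness of $\Phi_{p,A}$ (so $\{\Phi_{p,A}^k(I)\}$ is norm bounded) and polarization, this yields the existence of the WOT-limit
\[
Q := \text{\rm WOT-}\lim_{k\to\infty}\Phi_{p,A}^k(I),
\]
which is a positive injective operator on $\cH$. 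By Lemma \ref{wot-cp}, $\Phi_{p,A}$ is WOT-continuous on bounded sets, so $\Phi_{p,A}(Q)=Q$; explicitly, $\sum_{1\le|\alpha|\le N}a_\alpha A_\alpha Q A_\alpha^* = Q$.

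Next I would set $Y:=Q^{1/2}$. Since $Q$ is injective, $Y$ is injective with dense range $(\ker Q)^\perp=\cH$. Rewriting $\Phi_{p,A}(Q)=Q$ as $\sum_\alpha a_\alpha \|YA_\alpha^* h\|^2 = \|Yh\|^2$ gives the key estimate $\|YA_i^* h\|^2 \le a_{g_i}^{-1}\|Yh\|^2$. In particular, the prescriptions
\[
L_i(Yh) := Y A_i^* h, \qquad i=1,\ldots,n,
\]
are well-defined on $Y\cH$ (if $Yh=Yh'$ then $Y A_i^*(h-h')=0$ by the above estimate) and extend to bounded operators $L_i\in B(\cH)$ with $\|L_i\|\le 1/\sqrt{a_{g_i}}$. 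Taking adjoints of $L_i Y = YA_i^*$ and using $Y=Y^*$, we arrive at $YT_i = A_iY$, where $T_i:=L_i^*$.

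It remains to verify the three properties of $T:=(T_1,\ldots,T_n)$. Iterating gives $A_\alpha Y = Y T_\alpha$ for every $\alpha\in\FF_n^+$, so for any $q\in\cP$ one has $Yq(T_1,\ldots,T_n) = q(A_1,\ldots,A_n)Y = 0$, and the injectivity of $Y$ forces $q(T_1,\ldots,T_n)=0$. For the Cuntz-type identity, the same intertwining yields $Y T_\alpha T_\alpha^* Y = A_\alpha Q A_\alpha^*$, hence
\[
Y\Bigl(\sum_{1\le|\alpha|\le N} a_\alpha T_\alpha T_\alpha^*\Bigr) Y = \sum_{\alpha} a_\alpha A_\alpha Q A_\alpha^* = Q = Y^2,
\]
and since $Y$ has dense range we conclude $\sum a_\alpha T_\alpha T_\alpha^* = I$. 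Thus $T$ lies in $\cV^m_{p,\cP}(\cH)$ for every $m\ge 1$ and is of class $C_c$. The finite-dimensional case is immediate, since an injective operator on a finite-dimensional space is invertible.

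The main obstacle is really the first step: establishing that $Q$ exists as a genuine WOT-limit (not merely along Cesàro subsequences as in Theorem \ref{simi}) and that it is injective. This rests entirely on reading the $C_{\cdot 1}$ hypothesis as supplying a bona fide pointwise limit $\lim_k\langle \Phi_{p,A}^k(I)h,h\rangle$ (not merely nonzero cluster values), together with the power boundedness of $\Phi_{p,A}$. Once $Q$ is identified as an injective positive fixed point of $\Phi_{p,A}$, the remainder is an algebraic intertwining computation parallel to the classical Sz.-Nagy one.
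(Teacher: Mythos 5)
Your argument from the operator $Q$ onward is essentially the paper's own proof: the paper also produces a positive, injective fixed point of $\Phi_{p,A}$ (called $S$ there), sets $Y=S^{1/2}$, defines $Z_i(S^{1/2}h):=S^{1/2}A_i^*h$, extends by the estimate $a_{g_i}\|S^{1/2}A_i^*h\|^2\le\|S^{1/2}h\|^2$, and takes $T_i:=Z_i^*$, concluding $\Phi_{p,T}(I)=I$, $q(T_1,\ldots,T_n)=0$, and $A_iY=YT_i$ exactly as you do. The genuine gap is in your first step. You assume that $\lim_{k\to\infty}\langle\Phi_{p,A}^k(I)h,h\rangle$ exists for every $h$, but nothing in the hypotheses gives this: $A$ is not assumed to satisfy $\Phi_{p,A}(I)\le I$, so the sequence $\Phi_{p,A}^k(I)$ is not monotone, and power boundedness only gives a uniform bound. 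In this theorem the class $C_{\cdot 1}$ condition must be read as ``$\langle\Phi_{p,A}^k(I)h,h\rangle$ does not converge to $0$ for $h\ne 0$'' (that is precisely how the paper uses it: its proof reaches the contradiction ``$\lim_{k\to\infty}\langle\Phi_{p,A}^k(I)h,h\rangle=0$''); it does not supply a bona fide pointwise limit. Under your stronger reading you are proving a weaker theorem, and without it your WOT-limit $Q$ need not exist, so the whole construction has no starting point.

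The paper's fix is the one you would need: replace the limit by a Banach limit, setting $[h,h']:=\operatorname{LIM}_{k\to\infty}\langle\Phi_{p,A}^k(I)h,h'\rangle$. Injectivity of the representing operator $S$ comes from the lower bound $[h,h]\ge\gamma_h:=\inf_{k}\langle\Phi_{p,A}^k(I)h,h\rangle>0$ for $h\ne0$, and the positivity of $\gamma_h$ is where power boundedness enters: if $\langle\Phi_{p,A}^{k_0}(I)h,h\rangle\le\epsilon/M$ for some $k_0$, then $\langle\Phi_{p,A}^{q+k_0}(I)h,h\rangle\le\epsilon$ for all $q$, forcing the limit to be $0$ and contradicting $C_{\cdot 1}$ (your proposal skips this point as well). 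The fixed-point identity $\Phi_{p,A}(S)=S$ then follows from shift invariance of the Banach limit together with the finiteness of the sum $\sum_{1\le|\alpha|\le N}$ — this is exactly why the theorem is stated for a positive regular \emph{polynomial} $p$ rather than a general free holomorphic $f$, since a Banach limit cannot be interchanged with an infinite sum the way your WOT-continuity argument (Lemma \ref{wot-cp}) could be used for a genuine limit. With $S$ in hand, the rest of your intertwining computation is correct and coincides with the paper's.
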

\begin{proof}
Since $\Phi_{p,A}$ is power bounded, there is $M>0$ such that
$\|\Phi_{p,A}^k\|\leq M$ for all $k\in \NN$. Note that for each
$h\in \cH$ with $h\neq 0$, we have
$$
\gamma_h:=\inf_{k\in \NN} \left< \Phi_{p,A}^k(I)h,h\right> >0.
$$
Indeed, if we assume that $\gamma_h=0$, then for any $\epsilon>0$
there is $k_0\in \NN$ such that $ \left<
\Phi_{p,A}^{k_0}(I)h,h\right> \leq \frac{\epsilon}{M}$. Since
$\Phi_{p,A}$ is a positive map, we have
$$
 \left< \Phi_{f,A}^{q+k_0}(I)h,h\right>
 \leq \|\Phi_{p,A}^q\|\left< \Phi_{p,A}^{k_0}(I)h,h\right>\leq
 \epsilon\qquad \text{  for any } q\in \NN.
 $$
 Consequently,   $\lim_{k\to \infty} \left< \Phi_{p,A}^k(I)h,h\right>=0$,
  which is a contradiction with the hypothesis.
Now, define
  $$
  [h,h']:=\operatornamewithlimits{LIM}_{k\to\infty} \left<
  \Phi_{p,A}^k(I)h,h'\right>, \qquad  h,h'\in \cH,
  $$
where LIM is a Banach limit. Due to the  properties of a Banach
limit, we have
$$
0< \gamma_h\leq [h,h] \leq M\|h\|^2,\qquad h\in \cH, h\neq 0,
$$
and
\begin{equation*}
\begin{split}
[h,h]&=\operatornamewithlimits{LIM}_{k\to\infty} \left<
  \Phi_{p,A}^{k+1}(I)h,h\right>
=\operatornamewithlimits{LIM}_{k\to\infty}\sum_{1\leq |\alpha|\leq
N} a_\alpha\left<
  \Phi_{p,A}^{k}(I)A_\alpha^*h, A_\alpha^*h\right>\\
  &=\sum_{1\leq |\alpha|\leq N} a_\alpha[A_\alpha^*h, A_\alpha^*h]
\end{split}
\end{equation*}
for any $h\in \cH$.  Using standard theory of bounded Hermitian
bilinear maps, we find  a  self-adjoint bounded operator $S\in
B(\cH)$ such that $[h,h']=\left<Sh,h'\right>$ for all $h,h'\in \cH$.
Therefore,  we have
$$
0<\gamma_h\leq \left<Sh,h\right>\leq M\|h\|^2, \qquad h\in \cH,
h\neq 0,
$$
which shows that $S$ is a one-to-one positive operator with range
dense   in $\cH$. Taking into account  the relations obtained above,
we deduce that
\begin{equation} \label{Sh}
\begin{split}
 \left<Sh,h\right>&=[h,h]=\sum_{1\leq |\alpha|\leq N} a_\alpha[A_\alpha^*h,
 A_\alpha^*h]\\
 &=\sum_{1\leq |\alpha|\leq N} a_\alpha\left<SA_\alpha^*h,
 A_\alpha^*h\right>=\left<\Phi_{p,A}(S)h,h\right>
\end{split}
\end{equation}
for any $h\in \cH$.  Hence, $\Phi_{p,A}(S)=S$ and $ a_{g_i}\|S^{1/2}
A_i^*h\|^2\leq \|S^{1/2} h\|^2$, $h\in \cH, $ for each $i=1,\ldots,
n$. Since $p$ is a positive regular polynomial,  we have $a_{g_i}>0$
and, consequently, it makes sense to define $Z_i:S^{1/2}(\cH)\to
\cH$ by setting
$$
Z_i(S^{1/2}h):=S^{1/2} A_i^*h,\qquad h\in \cH.
$$
Since $\|Z_i(S^{1/2}h)\|\leq \frac{1}{a_{g_i}}\|S^{1/2} h\|$,  $h\in
\cH$, and $S^{1/2}$ has range dense in $\cH$, $Z_i$ has a unique
bounded linear  extension to $\cH$, which we also denote  by $Z_i$.
Therefore, $\|Z_ix\|\leq  \frac{1}{a_{g_i}}\| x\|$, $x\in \cH$. Due
to relation \eqref{Sh}, we have
$$
\sum_{1\leq |\alpha|\leq N} a_\alpha\left<Z_{\tilde\alpha}^*
Z_{\tilde\alpha} S^{1/2}h, S^{1/2}h\right>=\|S^{1/2} h\|^2, \quad
h\in \cH.
$$
Since $S^{1/2}$ has range dense in $\cH$ and  $Z_i$  are bounded
operators on $\cH$, we deduce that  $\Phi_{p, Z^*}(I)=I$. Setting
now, $T_i:=Z_i^*$, $i=1,\ldots,n$, and $Y:=S^{1/2}$, we get
$\Phi_{p,T}(I)=I$ and $A_iY= YT_i$, $i=1,\ldots,n$. Note also that $
Yq(T_1,\ldots,T_n)=q(A_1,\ldots, A_n) Y=0$ for all $q\in \cP$. Since
$Y$ is one-to-one, we deduce that $q(T_1,\ldots,T_n)=0$. Therefore,
$(T_1,\ldots, T_n)\in \cV^m_{p,\cP}(\cH)$ is of class $C_c$. The
proof is complete.
\end{proof}

\bigskip



\end{document}